\newcommand{\Z}{\mathbb{Z}}
\newcommand{\R}{\mathbb{R}}
\newcommand{\C}{\mathbb{C}}
\renewcommand{\AA}{\mathcal{A}}
\newcommand{\Sc}{\mathcal{S}}
\newcommand{\EE}{\mathcal{E}}
\newcommand{\FF}{\mathcal{F}}
\newcommand{\GL}{\mathrm{GL}}
\newcommand{\SL}{\mathrm{SL}}
\newcommand{\SO}{\mathrm{SO}}
\newcommand{\Sp}{\mathrm{Sp}}
\newcommand{\Irr}{\mathrm{Irr}}
\newcommand{\unit}{\mathrm{unit}}
\newcommand{\gp}{\mathrm{gp}}
\newcommand{\temp}{\mathrm{temp}}
\newcommand{\Ind}{\mathrm{Ind}}
\newcommand{\Jac}{\mathrm{Jac}}
\newcommand{\soc}{\mathrm{soc}}
\newcommand{\Cusp}{\mathrm{Cusp}}
\newcommand{\iif}{&\quad&\text{if }}
\newcommand{\other}{&\quad&\text{otherwise}}
\newcommand{\resp}{resp.~}
\renewcommand{\1}{\mathbf{1}}
\newcommand{\ep}{\varepsilon}
\newcommand{\pair}[1]{\left\langle #1 \right\rangle}
\newcommand{\half}[1]{\frac{#1}{2}}
\newtheorem{thm}{Theorem}[section]
\newtheorem{prop}[thm]{Proposition}
\newtheorem{cor}[thm]{Corollary}
\newtheorem{defi}[thm]{Definition}
\newtheorem{ex}[thm]{Example}
\newtheorem{alg}[thm]{Algorithm}
\title[On the socles of certain parabolically induced representations]
{On the socles of certain parabolically induced representations of $p$-adic classical groups}
\author{Hiraku Atobe}
\date{}
\subjclass[2010]{Primary 22E50; Secondary 22D10}
\keywords{Socles; Speh representations; $A$-parameters}
\address{
Department of Mathematics, Hokkaido University,
Kita 10, Nishi 8, Kita-Ku, Sapporo, Hokkaido, 060-0810, Japan 
}
\email{
atobe@math.sci.hokudai.ac.jp
}
\begin{document}
\maketitle

\begin{abstract}
In this paper, we consider representations of $p$-adic classical groups
parabolically induced from 
the products of shifted Speh representations 
and unitary representations of Arthur type of good parity. 
We describe 
how to compute the socles (the maximal semisimple subrepresentations) of these representations. 
As a consequence, we can determine whether these representations are reducible or not.
In particular, our results produce many unitary representations, 
which appear in the complementary series.
\end{abstract}

\tableofcontents

%\section{Introduction}
%\section{Introduction}
\section{Introduction}
One of the most important problems in representation theory is 
the classification of irreducible unitary representations of a given group. 
This is called the unitary dual problem.
In this paper, we consider $p$-adic reductive groups. 
Let $F$ be a non-archimedean local field of characteristic zero. 
The unitary duals of general linear groups $\GL_n(F)$ 
were explicitly described by Tadi{\'c} \cite{T-GL} in the 1980's. 
According to his description, 
the union of the unitary duals of all $\GL_n(F)$ has two important classes; 
the \emph{Speh representations} and their \emph{complementary series}. 
Indeed, any irreducible unitary representation of $\GL_n(F)$
is obtained as the irreducible parabolically induced representation
from a product of complementary series. 
\vskip 10pt

Now we consider classical groups. 
Let $G_n$ be a split special odd orthogonal group $\SO_{2n+1}(F)$ 
or a symplectic group $\Sp_{2n}(F)$ of rank $n$ over $F$.  
There are several works on the unitary dual of $G_n$. 
For example: 
\begin{itemize}
\item
The generic unitary dual of $G_n$ was explicitly described by Lapid--Mui{\'c}--Tadi{\'c} \cite{LMT}. 
\item
When $n \leq 3$, 
Tadi{\'c} \cite{T-rank3} computed the unitary dual of $G_n$ completely. 
\end{itemize}
However, the general case is still widely unknown.
\vskip 10pt

The notion of \emph{local $A$-packets} was introduced by Arthur \cite{Ar}. 
They are (multi-)sets over 
the set of equivalence classes of irreducible unitary representations of $G_n$. 
We say that an irreducible representation is \emph{of Arthur type}
if it belongs to some local $A$-packet. 
One can regard representations of Arthur type as analogues of Speh representations 
in the sense that both of them are local factors of square-integrable automorphic representations.
In fact, representations of Arthur type are expected to play an alternative role
to Speh representations  in the unitary dual problem (see \cite[Conjectures 8.2, 8.3, 8.5]{T-arthur}). 
\vskip 10pt

In this paper, we consider the parabolically induced representation of the form
\[
\Pi_s = u_\rho(a,b)|\cdot|^s \rtimes \pi_A
\]
where 
\begin{itemize}
\item
$u_\rho(a,b)$ is the (unitary) Speh representation with 
an irreducible self-dual supercuspidal representation $\rho$ of $\GL_d(F)$ 
and positive integers $a,b$
(see Section \ref{s.gl}); 
\item
$\pi_A$ is an irreducible representation of Arthur type (see Section \ref{Apacket}); 
\item
$s \in \R$.
\end{itemize}
It is known that: 
\begin{itemize}
\item
$\Pi_0$ decomposes into a multiplicity-free direct sum of irreducible unitary representations of Arthur type 
(\cite[Proposition 2.4.3]{Ar}). 
\item
If $s_0$ is the \emph{first reducibility point} for $\Pi_s$, i.e.,
the minimal non-negative real number such that $\Pi_{s_0}$ is reducible, 
then $\Pi_s$ is irreducible and unitary for any $0 \leq s < s_0$ (see \cite[Section 3 (b)]{T-ext}). 
In this case, $\Pi_s$ is called a \emph{complementary series representation}. 
\item
All irreducible constituents of $\Pi_{s_0}$ are also unitary (see \cite[Section 3 (c)]{T-ext}). 
\end{itemize}
In particular, the study of $\Pi_s$ would produce many irreducible unitary representations. 
\vskip 10pt

In several restricted cases, the reducibility and the composition series of $\Pi_s$ are already known. 
For example, 
several criteria for the irreducibility of $\Pi_s$ 
were given by 
\begin{itemize}
\item
Mui{\'c} \cite{Mu2} 
when $b = 1$ and $\pi_A$ is a discrete series representation; 

\item
Mati{\'c} \cite{Ma1} 
when $a = 1$ and $\pi_A$ is a discrete series representation;

\item
Lapid--Tadi{\'c} \cite[Theorems 1.1, 1.2]{LT} 
when $(a,b)$ is arbitrary and $\pi_A$ is supercuspidal. 
\end{itemize}
All irreducible constituents of $\Pi_s$ were obtained by
\begin{itemize}
\item
Mui{\'c} \cite{Mu1}
when $b = 1$ and $\pi_A$ is a strongly positive discrete series representation;

\item
Mati{\'c} \cite{Ma2}
when $a = 1$, $b+2s \in 2\Z$ and $\pi_A$ is a discrete series representation;

\item
Bo{\v s}njak \cite{B}
when $\pi_A$ is supercuspidal and $s \geq (a+b-1)/2$.
\end{itemize}
Finally, the semisimplification of $\Pi_s$ for $s \in \{0, 1/2\}$ 
seems to be already given by M{\oe}glin 
(watch the video of her talk \cite{M-video}). 
\vskip 10pt

In this paper, we describe the \emph{socle} $\soc(\Pi_s)$ of $\Pi_s$, i.e., 
the maximal semisimple subrepresentation of $\Pi_s$. 
The main theorem is as follows. 

\begin{thm}\label{main}
Let $\Pi_s = u_\rho(a,b)|\cdot|^s \rtimes \pi_A$ be as above. 
Then we can describe the socle $\soc(\Pi_s)$ algorithmically in terms of the Langlands classification. 
Moreover, the following hold. 
\begin{enumerate}
\item
If $s > (a-1)/2$ or $s < -(b-1)/2$, or if $s \not\in (1/2)\Z$, 
then the socle $\soc(\Pi_s)$ is irreducible. 
(See Proposition \ref{s>>0}.) 

\item
If $s \in (1/2)\Z$ and $0 < s \leq (a-1)/2$ (\resp $-(b-1)/2 \leq s < 0$), 
then any irreducible subrepresentation of $\Pi_s$ is 
of the form $\pi' = \soc(u_\rho(2s,b)|\cdot|^{a/2} \rtimes \pi_A')$ 
(\resp $\pi' = \soc(u_\rho(a,-2s)|\cdot|^{-b/2} \rtimes \pi_A')$)
for some irreducible summand $\pi_A'$ of $u_\rho(a-2s,b) \rtimes \pi_A$ 
(\resp $u_\rho(a,b+2s) \rtimes \pi_A$). 
Moreover, for such an irreducible summand $\pi_A'$, 
one can determine whether $\pi'$ is a subrepresentation of $\Pi_s$ or not. 
(See Propositions \ref{middle} and \ref{image}.) 

\item
If $s=0$, then the decomposition of $\Pi_0$ is explicitly given in terms of extended multi-segments. 
(See Theorem \ref{s=0}.) 
\end{enumerate}
\end{thm}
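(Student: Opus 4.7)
The plan is to split the theorem into its three regimes in $s$, each handled by a dedicated statement proved later in the paper. Part (1) is the easiest conceptually: I would realize $u_\rho(a,b)|\cdot|^s$ as the unique irreducible subrepresentation of a product of Zelevinsky segment representations. When $s > (a-1)/2$, every segment appearing has exponent strictly larger than any exponent in the Langlands data of $\pi_A$, so $\Pi_s$ is itself already in Langlands position and its socle is the Langlands subrepresentation, which is irreducible. The case $s < -(b-1)/2$ is handled dually by passing to Aubert duals, while the case $s \notin (1/2)\Z$ reduces to disjointness of cuspidal supports, which forces $\Pi_s$ itself to be irreducible.

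For part (2), which carries the main technical content, the plan is to exploit a key embedding of shifted Speh modules of the form
\[
u_\rho(a,b)|\cdot|^s \hookrightarrow u_\rho(2s,b)|\cdot|^{a/2} \times u_\rho(a-2s,b)|\cdot|^{-s+\text{shift}}
\]
with appropriate normalization. Applying Frobenius reciprocity and induction in stages, any irreducible subrepresentation $\pi'$ of $\Pi_s$ embeds into $u_\rho(2s,b)|\cdot|^{a/2} \rtimes \pi_A'$ for some irreducible summand $\pi_A'$ of $u_\rho(a-2s,b) \rtimes \pi_A$. The nontrivial step is to characterize exactly which $\pi_A'$ yield a genuine subrepresentation. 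I would attack this by computing $\rho|\cdot|^x$-derivatives via the Mœglin--Xu algorithm for $A$-packets, thereby reducing the question to a combinatorial criterion on extended multi-segments that can be verified case by case.

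Part (3), the case $s=0$, lies entirely in the unitary range. Here $\Pi_0$ is a sum of representations of Arthur type, and its decomposition is controlled by Arthur's multiplicity formula together with the structure of the local $A$-packets of $\pi_A$ and of $u_\rho(a,b) \rtimes \pi_A$. The explicit answer follows from describing how inserting a $u_\rho(a,b)$-block modifies the extended multi-segment of $\pi_A$, and I would verify this combinatorial description by matching its Jacquet module with Tadi\'c's formula applied to the induction.

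The main obstacle will be part (2). Distinguishing a true subrepresentation of $\Pi_s$ from a mere Jordan--H\"older constituent requires subrepresentation-level information rather than composition factor information, and Jacquet modules only compute the latter directly. The extended multi-segment bookkeeping needed to track the former is delicate, and the inductive argument must be organized to avoid circularity: the inductive step of (2) appeals to the classification of summands produced by (3) at a smaller parameter, so the whole scheme must be simultaneously inducted on a suitable rank parameter.
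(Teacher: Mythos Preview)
Your plan for Part (1) has a genuine gap. The claim that for $s > (a-1)/2$ ``every segment appearing has exponent strictly larger than any exponent in the Langlands data of $\pi_A$'' is false: $\pi_A$ is an \emph{arbitrary} representation of Arthur type, and its Langlands data can contain segments $\Delta_\rho[x,y]$ with $x$ as large as one likes (coming from summands $\rho \boxtimes S_{a'} \boxtimes S_{b'}$ of the $A$-parameter with large $a',b'$). The bound $s > (a-1)/2$ depends only on the Speh block, not on $\pi_A$, so $\Pi_s$ is \emph{not} a standard module in general and your Langlands-position argument does not apply. Likewise, for $s \notin (1/2)\Z$ the naive cuspidal-support argument is insufficient for classical groups: one must also control the interaction between $u_\rho(a,b)|\cdot|^s$ and its contragredient $u_\rho(a,b)|\cdot|^{-s}$, and in any case Proposition \ref{s>>0} only asserts irreducibility of the socle, not of $\Pi_s$ itself (the latter is a separate result, Theorem \ref{irred-bad}).

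The paper's actual mechanism for Part (1) is the theory of highest derivatives $D^{\max}_{\rho|\cdot|^x}$ (and their $\Delta_\rho[0,-1]$-, $Z_\rho[0,1]$-variants). The numerical conditions $s > (a-1)/2$ or $s < -(b-1)/2$ are exactly what is needed so that a suitable composite of highest derivatives, applied in the order dictated by the rows (or columns) of the shifted Speh, peels off the entire $u_\rho(a,b)|\cdot|^s$-block from any irreducible subrepresentation $\pi'$ and returns the corresponding composite of highest derivatives of $\pi_A$. Since highest derivatives of irreducibles are irreducible and determine the representation, this forces $\pi'$ to be unique. Your approach to Part (2) via the embedding $u_\rho(a,b)|\cdot|^s \hookrightarrow u_\rho(2s,b)|\cdot|^{a/2} \times u_\rho(a-2s,b)$ matches the paper, but the characterization of which $\pi_A'$ occur (Proposition \ref{image}) is again a derivative criterion, not an extended-multi-segment criterion; and there is no circularity to worry about, since Part (3) is proven independently (by reducing to the tempered case, then the non-negative DDR case, then the general case via derivatives) and Part (2) simply quotes it for the unitary factor $u_\rho(a-2s,b) \rtimes \pi_A$.
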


We have several consequences. 
\begin{cor}[Corollary \ref{free}]
Let $\Pi_s = u_\rho(a,b)|\cdot|^s \rtimes \pi_A$ be as above. 
Then any irreducible subrepresentation of $\Pi_s$ appears in 
the composition series of $\Pi_s$ with multiplicity one. 
In particular, $\soc(\Pi_s)$ is multiplicity-free. 
\end{cor}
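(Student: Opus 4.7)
The plan is to deduce the corollary directly from Theorem \ref{main} by analyzing the three regimes separately, with an induction in the intermediate regime. Since the multiplicity-freeness of $\soc(\Pi_s)$ follows formally from the statement that each irreducible subrepresentation appears as a Jordan--Hölder constituent with multiplicity one, I would focus on the latter assertion.

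In the regime of Theorem \ref{main}(1), the socle is already irreducible, so multiplicity-freeness of $\soc(\Pi_s)$ is immediate; the multiplicity-one statement among composition factors should follow from the Langlands classification, since in this range the Langlands data of the socle are inherited directly from those of $\Pi_s$ and force a unique Langlands subrepresentation of multiplicity one. In the regime $s=0$ (part (3)), I would invoke the explicit extended multi-segment decomposition of Theorem \ref{s=0}, which refines and is compatible with Arthur's multiplicity-free direct sum decomposition \cite[Proposition 2.4.3]{Ar} recalled in the introduction.

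The main content is regime (2), which I would handle by induction on $a-2s$ (symmetrically on $b+2s$), with base case $a-2s=0$ reducing to regime (3). By Theorem \ref{main}(2), every irreducible subrepresentation $\pi'$ of $\Pi_s$ has the form $\pi' = \soc(u_\rho(2s,b)|\cdot|^{a/2} \rtimes \pi_A')$ for some irreducible summand $\pi_A'$ of the auxiliary representation $u_\rho(a-2s,b) \rtimes \pi_A$. Since $2s \leq a - 1$ forces $a/2 > (2s-1)/2$, each outer induction falls into regime (1), so its socle $\pi'$ is irreducible. The auxiliary $u_\rho(a-2s,b) \rtimes \pi_A$ is itself a case (3) representation, hence multiplicity-free by the inductive hypothesis, and the algorithmic criterion in Propositions \ref{middle} and \ref{image} that determines whether $\pi'$ is a subrepresentation of $\Pi_s$ should furnish an injection $\pi_A' \mapsto \pi'$ on the summands that do contribute.

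The main obstacle will be promoting these ingredients to a multiplicity-one count \emph{inside} $\Pi_s$ itself. For this I would apply $\Jac$ along the parabolic that strips off $u_\rho(a-2s,b)$ from the shifted Speh block $u_\rho(a,b)|\cdot|^s$, thereby relating Jordan--Hölder multiplicities of $\Pi_s$ to those of the outer inductions $u_\rho(2s,b)|\cdot|^{a/2} \rtimes \pi_A'$. Combining the multiplicity-one statement from regime (1) for each such outer socle with the multiplicity-free decomposition of the auxiliary representation, Frobenius reciprocity should force the multiplicity of $\pi'$ in $\Pi_s$ to be exactly one. The delicate point will be verifying that the M\oe glin-style combinatorics of extended multi-segments underlying the algorithm do not introduce hidden coincidences among composition factors coming from different $\pi_A'$; this should be controllable via the explicit Langlands-parameter bookkeeping produced by Theorem \ref{main}.
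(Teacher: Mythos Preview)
Your approach is essentially the same as the paper's, which proves the corollary in one line by citing Propositions \ref{s>>0}, \ref{middle}, and \ref{lem223}: each of these already contains the multiplicity-one assertion in its respective regime, so nothing further is needed.

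A few minor corrections to your plan. First, the induction on $a-2s$ in regime (2) is unnecessary: the auxiliary representation $u_\rho(a-2s,b)\rtimes\pi_A$ is already at $s=0$, so one invokes Proposition \ref{lem223} (Arthur's multiplicity-free result) directly, not an inductive hypothesis, and there is no base case $a-2s=0$ to reach. Second, your worry about ``hidden coincidences'' is exactly what the proof of Proposition \ref{middle} dispatches: the derivatives used there recover $\sigma$ uniquely from $\pi'$ even at the level of composition factors, so $\pi'$ cannot occur in $[u_\rho(2s,b)|\cdot|^{a/2}\rtimes\sigma']$ for any other summand $\sigma'$; combined with the multiplicity-one statement from regime (1) and the multiplicity-free decomposition at $s=0$, this already bounds the multiplicity of $\pi'$ in $[\Pi_s]$ by one, and no separate Jacquet-functor count is needed. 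Third, in regime (1) the multiplicity-one claim is proven via highest derivatives (see the proof of Proposition \ref{s>>0}), not via the Langlands classification as such, since $\Pi_s$ is not in general a standard module.
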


\begin{cor}[Corollary \ref{irred}]
Let $\Pi_s = u_\rho(a,b)|\cdot|^s \rtimes \pi_A$ be as above. 
Then $\Pi_s$ is irreducible if and only if all of the following conditions hold: 
\begin{itemize}
\item
$\soc(\Pi_s)$ is irreducible; 
\item
$\soc(\Pi_{-s})$ is irreducible; 
\item
$\soc(\Pi_s) \cong \soc(\Pi_{-s})$.
\end{itemize}
\end{cor}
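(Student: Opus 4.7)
The plan is to derive both directions from the duality $\Pi_s^\vee \cong \Pi_{-s}$ combined with Corollary \ref{free}. First I would establish this duality: since $\rho$ is self-dual, the Speh representation $u_\rho(a,b)$ is self-dual, so $(u_\rho(a,b)|\cdot|^s)^\vee \cong u_\rho(a,b)|\cdot|^{-s}$; combined with $\pi_A^\vee \cong \pi_A$ (which holds for every irreducible representation of $G_n$ via the MVW involution), taking contragredients of the inducing data yields $\Pi_s^\vee \cong \Pi_{-s}$. The MVW involution also gives $\sigma \cong \sigma^\vee$ for each $\sigma \in \Irr(G_n)$, a fact I will reuse.

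The forward implication is then immediate: if $\Pi_s$ is irreducible, both $\Pi_s$ and $\Pi_{-s} \cong \Pi_s^\vee$ are irreducible with socles equal to themselves, and $\Pi_s \cong \Pi_s^\vee \cong \Pi_{-s}$ forces $\soc(\Pi_s) \cong \soc(\Pi_{-s})$. For the converse, set $\pi := \soc(\Pi_s) \cong \soc(\Pi_{-s})$, which is irreducible and self-dual. The standard identity \emph{cosocle equals the dual of the socle of the contragredient} gives that the maximal semisimple quotient of $\Pi_s$ equals $\soc(\Pi_s^\vee)^\vee = \soc(\Pi_{-s})^\vee \cong \pi$; in particular the cosocle is irreducible, so $\pi$ occurs both as the socle and as the cosocle of $\Pi_s$.

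The final step is a socle-cosocle argument. I would fix an embedding $\iota \colon \pi \hookrightarrow \Pi_s$ and a surjection $p \colon \Pi_s \twoheadrightarrow \pi$, and analyze $p \circ \iota \in \mathrm{End}(\pi)$ via Schur. If $p \circ \iota \neq 0$ it is a scalar, so $\Pi_s \cong \pi \oplus \ker(p)$; any nonzero $\ker(p)$ would have an irreducible subrepresentation forced to lie in $\soc(\Pi_s) = \pi$, contradicting Corollary \ref{free} because it would give Jordan--H\"older multiplicity at least two for $\pi$. If instead $p \circ \iota = 0$, then $\pi \subseteq \ker(p)$, so $\pi$ occurs as a proper subrepresentation and also as the quotient $\Pi_s / \ker(p)$, again producing multiplicity at least two and contradicting Corollary \ref{free}. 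Hence $\Pi_s = \pi$ is irreducible. The main obstacle I anticipate is the careful bookkeeping identifying the cosocle of $\Pi_s$ with $\soc(\Pi_{-s})^\vee$; once this duality statement is cleanly in hand, the remaining argument is essentially formal.
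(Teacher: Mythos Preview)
Your proof is correct and follows essentially the same approach as the paper: both arguments identify the cosocle of $\Pi_s$ with $\soc(\Pi_{-s})$ via the duality $\Pi_s^\vee \cong \Pi_{-s}$, and then use Corollary \ref{free} to rule out multiplicity $\geq 2$. The paper's version is slightly more streamlined---it observes directly that if $\Pi_s$ were reducible then $\Pi_s/\soc(\Pi_s)$ would still surject onto $\soc(\Pi_{-s}) \cong \soc(\Pi_s)$, giving the contradiction---whereas your case split on whether $p\circ\iota$ vanishes is a mild elaboration (your Case 2 is exactly the paper's argument, and your Case 1 is an extra branch that could instead be dispatched immediately from the irreducibility of $\soc(\Pi_s)$ without invoking Corollary \ref{free}).
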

In particular, one can compute the first reducibility point $s_0$ for $\Pi_s$ (Corollary \ref{FRP}). 
\vskip 10pt

This paper is organized as follows.
In Section \ref{s.langlands}, we review the Langlands classification of classical groups. 
In Section \ref{s.non-unitary}, we prove Theorem \ref{main} (1) and (2). 
To do this, we refine the theory of derivatives used in \cite{AM}. 
Theorem \ref{main} (3) is proven in Section \ref{s.unitary}
after reviewing the theory of extended multi-segments established in the previous paper \cite{At}.
Finally, in Section \ref{s.irred}, we obtain several consequences about the irreducibility of $\Pi_s$, 
and we give some examples. 
To describe $\soc(\Pi_s)$ explicitly, we need to compute several derivatives. 
In Appendix \ref{appA}, we give an algorithm for the computations of certain derivatives, 
which are not obtained in \cite{AM}. 
\vskip 10pt

\noindent
{\bf Acknowledgement.}
The author was supported by JSPS KAKENHI Grant Number 19K14494. 
He is very grateful to the referee for the careful readings and the helpful comments.
\vskip 10pt

\noindent
\textbf{Notation.}
Let $F$ be a non-archimedean local field of characteristic zero.
The normalized absolute value is denoted by $|\cdot|$, 
which is also regarded as a character of $\GL_d(F)$ via composing with the determinant map. 
\par

Let $G_n$ be a split special odd orthogonal group $\SO_{2n+1}(F)$ or a symplectic group $\Sp_{2n}(F)$ 
of rank $n$ over $F$. 
For a smooth representation $\Pi$ of $G_n$ or $\GL_n(F)$ of finite length, 
we write $[\Pi]$ for its semisimplification. 
Similarly, we denote by $\soc(\Pi)$ the \emph{socle} of $\Pi$, 
i.e., the maximal semisimple subrepresentation of $\Pi$.
The set of equivalence classes of irreducible smooth representations of a group $G$ is denoted by $\Irr(G)$. 
\par

We will often extend the set theoretical language to multi-sets. 
Namely, we write a multi-set as $\{x, \dots, x, y, \dots, y, \ldots \}$.
When we use a multi-set, we will mention it.

%\section{Preliminary}
%\section{Preliminary}
\section{Langlands classification}\label{s.langlands}
In this section, we review the Langlands classification of classical groups. 

%\subsection{General linear groups}
\subsection{General linear groups}\label{s.gl}
First, we recall some notations for representations of $\GL_n(F)$. 
Let $P$ be a standard parabolic subgroup of $\GL_n(F)$ 
with Levi subgroup $M \cong \GL_{n_1}(F) \times \dots \times \GL_{n_r}(F)$. 
For representations $\tau_1, \dots, \tau_r$ of $\GL_{n_1}(F), \dots, \GL_{n_r}(F)$, respectively,
we denote by 
\[
\tau_1 \times \dots \times \tau_r 
\coloneqq \Ind_P^{\GL_n(F)}(\tau_1 \boxtimes \dots \boxtimes \tau_r)
\]
the normalized parabolically induced representation. 
\par

Let $\Cusp_\unit(\GL_d(F))$ be the set of equivalence classes of 
irreducible unitary supercuspidal representations of $\GL_d(F)$, 
and $\Cusp^\bot(\GL_d(F))$ be the subset consisting of self-dual elements.
\par

A \emph{segment} $[x,y]_\rho$ is a set of supercuspidal representations of the form 
\[
[x,y]_\rho \coloneqq \{ \rho|\cdot|^x, \rho|\cdot|^{x-1}, \dots, \rho|\cdot|^y\}, 
\]
where $\rho \in \Cusp_\unit(\GL_d(F))$ and $x,y \in \R$ such that $x-y \in \Z$ and $x \geq y$.
For a segment $[x,y]_\rho$, define the \emph{Steinberg representation} $\Delta_\rho[x,y]$ 
as a unique irreducible subrepresentation of 
\[
\rho|\cdot|^x \times \dots \times \rho|\cdot|^{y}.
\]
This is an essentially discrete series representation of $\GL_{d(x-y+1)}(F)$. 
Similarly, we define $Z_\rho[y,x]$ 
as a unique irreducible quotient of the same induced representation.
By convention, we set $\Delta_\rho[x,x+1]$ and $Z_\rho[x+1,x]$ 
to be the trivial representation of the trivial group $\GL_0(F)$.
\par

The Langlands classification for $\GL_n(F)$ says that 
every $\tau \in \Irr(\GL_n(F))$ is a unique irreducible subrepresentation 
of $\Delta_{\rho_1}[x_1,y_1] \times \dots \times \Delta_{\rho_r}[x_r,y_r]$, 
where $\rho_i \in \Cusp_\unit(\GL_{d_i}(F))$ for $i = 1, \dots, r$ 
such that $x_1+y_1 \leq \dots \leq x_r+y_r$. 
In this case, we write
\[
\tau = L(\Delta_{\rho_1}[x_1,y_1], \dots, \Delta_{\rho_r}[x_r,y_r]).
\]
\par

When $(x_{i,j})_{1 \leq i \leq t, 1 \leq j \leq d}$ satisfies that $x_{i,j} = x_{1,1} - i + j$, 
the irreducible representation 
$L(\Delta_{\rho}[x_{1,1},x_{t,1}], \dots, \Delta_{\rho}[x_{1,d},x_{t,d}])$
is called a \emph{(shifted) Speh representation} and is denoted by
\[
\begin{pmatrix}
x_{1,1} & \ldots & x_{1,d} \\
\vdots & \ddots & \vdots \\
x_{t,1} & \ldots & x_{t,d}
\end{pmatrix}_\rho
\coloneqq L(\Delta_{\rho}[x_{1,1},x_{t,1}], \dots, \Delta_{\rho}[x_{1,d},x_{t,d}]).
\] 
Note that it is isomorphic to the unique irreducible subrepresentation of 
$Z_\rho[x_{1,1}, x_{1,d}] \times \dots \times Z_\rho[x_{t,1}, x_{t,d}]$.
Especially, for positive integers $a$ and $b$, 
set 
\[
u_\rho(a,b) = \begin{pmatrix}
\half{a-b} & \ldots & \half{a+b}-1 \\
\vdots & \ddots & \vdots \\
-\half{a+b}+1 & \ldots & -\half{a-b}
\end{pmatrix}_\rho. 
\]
It is an irreducible unitary representation.
We often set $A \coloneqq (a+b)/2-1$ and $B \coloneqq (a-b)/2$.

%\subsection{Classical groups}
\subsection{Classical groups}
Next we recall some notations for representations of the classical group $G_n$. 
Fix an $F$-rational Borel subgroup of $G_n$. 
Let $P$ be a standard parabolic subgroup of $G_n$ with Levi subgroup 
$M \cong \GL_{n_1}(F) \times \dots \times \GL_{n_r}(F) \times G_{n_0}$. 
For representations $\tau_1, \dots, \tau_r$ and $\pi_0$ 
of $\GL_{n_1}(F), \dots, \GL_{n_r}(F)$ and of $G_{n_0}$, respectively, 
we denote by
\[
\tau_1 \times \dots \times \tau_r \rtimes \pi_0 
\coloneqq
\Ind_P^{G_n}(\tau_1 \boxtimes \dots \boxtimes \tau_r \boxtimes \pi_0)
\]
the normalized parabolically induced representation.
\par

The Langlands classification for $G_n$ says that 
every $\pi \in \Irr(G_n)$ is a unique irreducible subrepresentation 
of $\Delta_{\rho_1}[x_1,y_1] \times \dots \times \Delta_{\rho_r}[x_r,y_r] \rtimes \pi_0$, 
where 
\begin{itemize}
\item
$\rho_i \in \Cusp_\unit(\GL_{d_i}(F))$ for $i = 1,\dots,r$; 
\item
$x_1+y_1 \leq \dots \leq x_r+y_r < 0$; 
\item
$\pi_0$ is an irreducible tempered representation of $G_{n_0}$.
\end{itemize}
In this case, we write
\[
\pi = L(\Delta_{\rho_1}[x_1,y_1], \dots, \Delta_{\rho_r}[x_r,y_r]; \pi_0),
\]
and call $(\Delta_{\rho_1}[x_1,y_1], \dots, \Delta_{\rho_r}[x_r,y_r]; \pi_0)$
the \emph{Langlands data} for $\pi$. 
\par

We say that $\pi \in \Irr(G_n)$ is \emph{of Arthur type}
if $\pi$ belongs to an $A$-packet associated to some $A$-parameter. 
For the notion of $A$-parameters and properties of representations of Arthur type, 
see Sections \ref{Apar} and \ref{Apacket} below. 
As basic properties, the following are known:
Let $\pi \in \Irr(G_n)$ be of Arthur type. 
Then $\pi$ is a unitary representation.
In particular, for $\rho \in \Cusp^\bot(\GL_d(F))$, 
the parabolically induced representation $u_\rho(a,b) \rtimes \pi$ is also unitary 
so it is semisimple. 
By \cite[Proposition 2.4.3]{Ar} (see Proposition \ref{lem223}), 
we know that $u_\rho(a,b) \rtimes \pi$ is multiplicity-free.

%\section{Non-unitary inductions}
%\section{Non-unitary inductions}
\section{Non-unitary inductions}\label{s.non-unitary}
Through this section, we fix 
\begin{itemize}
\item
$\rho \in \Cusp^\bot(\GL_d(F))$; 
\item
$\pi \in \Irr(G_n)$ of Arthur type; and
\item
positive integers $a$ and $b$. 
\end{itemize}
The purpose of this paper is to explain 
how to describe the socle of parabolically induced representation 
$u_\rho(a,b)|\cdot|^s \rtimes \pi$
for $s \in \R$.
In this section, we do it for $s \not= 0$.

%\subsection{Theory of derivatives}
\subsection{Theory of derivatives}\label{s.der}
In this subsection, we introduce the notion of \emph{derivatives}, 
which is the main terminology.
\par

For a smooth representation $\pi$ of $G_n$ of finite length, 
denote by $\Jac_{P}(\pi)$ its Jacquet module 
along a standard parabolic subgroup $P$. 
Let $P_d$ be the standard parabolic subgroup 
with Levi subgroup isomorphic to $\GL_d(F) \times G_{n-d}$. 
For $x \in \R$,
the \emph{$\rho|\cdot|^x$-derivative} $D_{\rho|\cdot|^x}(\pi)$ 
is a semisimple representation of $G_{n-d}$ satisfying that 
\[
[\Jac_{P_{d}}(\pi)] = \rho|\cdot|^x \boxtimes D_{\rho|\cdot|^x}(\pi) + \sum_{i} \tau_i \boxtimes \pi_i, 
\]
where $\tau_i \boxtimes \pi_i$ is an irreducible representation of $\GL_{d}(F) \times G_{n-d}$ 
such that $\tau_i \not \cong \rho|\cdot|^x$. 
We say that $\pi$ is \emph{$\rho|\cdot|^x$-reduced} if $D_{\rho|\cdot|^x}(\pi) = 0$. 
For a segment $[x,y]_\rho$, we set 
\begin{align*}
D_{\rho|\cdot|^{x}, \dots, \rho|\cdot|^{y}}(\pi) 
&= D_{\rho|\cdot|^{y}} \circ \dots \circ D_{\rho|\cdot|^{x}}(\pi), \\
D_{\rho|\cdot|^{y}, \dots, \rho|\cdot|^{x}}(\pi) 
&= D_{\rho|\cdot|^{x}} \circ \dots \circ D_{\rho|\cdot|^{y}}(\pi). 
\end{align*}
Hence, with a suitable parabolic subgroup $P$, we have 
\begin{align*}
[\Jac_{P}(\pi)] 
&= \rho|\cdot|^x \boxtimes \dots \boxtimes \rho|\cdot|^y 
\boxtimes D_{\rho|\cdot|^{x}, \dots, \rho|\cdot|^{y}}(\pi)
+ \text{(others)}, \\
[\Jac_{P}(\pi)] 
&= \rho|\cdot|^y \boxtimes \dots \boxtimes \rho|\cdot|^x 
\boxtimes D_{\rho|\cdot|^{y}, \dots, \rho|\cdot|^{x}}(\pi)
+ \text{(others)}. 
\end{align*}
\par

We also set $D_{\rho|\cdot|^x}^{(0)}(\pi) = \pi$ and 
\[
D_{\rho|\cdot|^x}^{(k)}(\pi) = \frac{1}{k}D_{\rho|\cdot|^x} \circ D_{\rho|\cdot|^x}^{(k-1)}(\pi)
= \frac{1}{k!} \underbrace{D_{\rho|\cdot|^x} \circ \dots \circ D_{\rho|\cdot|^x}}_k (\pi)
\]
for $k > 0$.
It satisfies that 
\[
[\Jac_{P_{dk}}(\pi)] 
= (\rho|\cdot|^x)^k \boxtimes D_{\rho|\cdot|^x}^{(k)}(\pi)
+ \text{(others)},
\]
where $(\rho|\cdot|^x)^k = \rho|\cdot|^x \times \dots \times \rho|\cdot|^x$ ($k$ times).
When $D_{\rho|\cdot|^x}^{(k)}(\pi) \not= 0$ but $D_{\rho|\cdot|^x}^{(k+1)}(\pi) = 0$, 
we call $D_{\rho|\cdot|^x}^{(k)}(\pi)$ the \emph{highest $\rho|\cdot|^x$-derivative} of $\pi$, 
and set $D_{\rho|\cdot|^x}^{\max}(\pi) \coloneqq D_{\rho|\cdot|^x}^{(k)}(\pi)$.
Note that if $|x-x'| \not= 1$, then 
$D_{\rho|\cdot|^x}^{(k)} \circ D_{\rho|\cdot|^{x'}}^{(k')}(\pi) 
= D_{\rho|\cdot|^{x'}}^{(k')} \circ D_{\rho|\cdot|^{x}}^{(k)}(\pi)$
(see \cite[Lemma 5.6]{X1}).
\par

Although it is difficult to describe $D_{\rho|\cdot|^x}(\pi)$, 
one can compute $D_{\rho|\cdot|^x}^{\max}(\pi)$ when $x \not= 0$. 
\begin{thm}[{\cite[Lemma 3.1.3]{J-temp}}, {\cite[Propositions 3.3, 6.1, Theorem 7.1]{AM}}]\label{der}
Suppose that $x \not= 0$ so that $\rho|\cdot|^x$ is not self-dual. 
Let $\pi$ be an irreducible representation of $G_n$. 
Then the highest $\rho|\cdot|^x$-derivative $D_{\rho|\cdot|^x}^{\max}(\pi)$ is irreducible. 
Moreover, 
the Langlands data for $D_{\rho|\cdot|^x}^{\max}(\pi)$
can be described from those for $\pi$ explicitly, and vice versa. 
\end{thm}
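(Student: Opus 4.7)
The plan is to prove both assertions by induction on the length of the Langlands data of $\pi$, using the non-self-duality of $\rho|\cdot|^x$ to reduce the classical-group Jacquet-module computation to the essentially $\GL$-theoretic case treated by Zelevinsky. First, I would write
\[
\pi = L(\Delta_{\rho_1}[x_1,y_1], \ldots, \Delta_{\rho_r}[x_r,y_r]; \pi_0)
\]
in Langlands form, so that $\pi$ is the unique irreducible subrepresentation of the corresponding standard module $\Delta_{\rho_1}[x_1,y_1] \times \cdots \times \Delta_{\rho_r}[x_r,y_r] \rtimes \pi_0$. I would then apply the Tadi{\'c} structure formula (the Geometric Lemma for classical groups) to this standard module and read off the relevant $\rho|\cdot|^x$-isotypic component of $\Jac_{P_d}(\pi)$.

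The crucial simplification coming from $x \neq 0$ is that in the classical-group Geometric Lemma, the $\GL_d$-factor of each Jacquet-module piece is assembled from left-truncations of segments $\Delta_{\rho_i}[x_i,y_i]$, from Zelevinsky-duals of right-truncations (which contribute characters of the form $\rho_i^\vee|\cdot|^{-y}$), and from the Jacquet module of $\pi_0$. Since $\rho|\cdot|^x$ is not self-dual and $\pi_0$ is tempered, only the first type can produce $\rho|\cdot|^x$ itself, so the computation collapses into a purely $\GL$-type highest-derivative question. In particular, the maximum $k$ with $D_{\rho|\cdot|^x}^{(k)}(\pi) \neq 0$ equals the number of indices $i$ with $\rho_i \cong \rho$ and $x_i = x$.

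From the collapsed formula, I would propose as $D_{\rho|\cdot|^x}^{\max}(\pi)$ the Langlands subquotient whose data is obtained from that of $\pi$ by replacing each segment of the form $\Delta_\rho[x,y_i]$ by $\Delta_\rho[x-1,y_i]$ (interpreted as trivial if $y_i = x$) and re-sorting into Langlands order. Irreducibility and correctness of this candidate would be verified simultaneously by a Frobenius-reciprocity/socle argument: I would show that $\pi$ embeds into $(\rho|\cdot|^x)^k \rtimes \sigma$ for $\sigma$ the proposed derivative, and that the socle of this induced representation is irreducible (exploiting that $(\rho|\cdot|^x)^k$ is irreducible and that $\rho|\cdot|^x$ is non-self-dual). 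The ``vice versa'' assertion then follows automatically, since the recipe is manifestly invertible by reinserting $\rho|\cdot|^x$ at the left end of each truncated segment.

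The main obstacle I expect is the bookkeeping when several segments begin at $\rho|\cdot|^x$ or $\rho|\cdot|^{x+1}$: one must verify that the modified data is still in standard Langlands order (or can be re-sorted without changing the Langlands quotient), that no competing irreducible constituent with the same $\rho|\cdot|^x$-multiplicity appears in the Jacquet module, and that the socle of the re-induced representation really is irreducible. The non-self-duality of $\rho|\cdot|^x$ is used decisively at each of these steps to rule out contributions coming from the symplectic or orthogonal pairing in the Geometric Lemma.
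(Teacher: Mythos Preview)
The paper does not supply its own proof of this statement; it is quoted from \cite{J-temp} and \cite{AM}, so there is no in-paper argument to compare against.

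That said, your proposal has a genuine gap. The reduction step ``since $\rho|\cdot|^x$ is not self-dual and $\pi_0$ is tempered, only the first type can produce $\rho|\cdot|^x$'' is correct only for $x<0$. When $x>0$ both of the other contributions survive. First, temperedness of $\pi_0$ does not kill $D_{\rho|\cdot|^x}(\pi_0)$: Casselman's criterion rules out exponents $\rho|\cdot|^z$ with $z<0$ only, and for instance $D_{\rho|\cdot|^1}(\pi(\phi,\ep))\neq 0$ as soon as $\rho\boxtimes S_3\subset\phi$. Second, the dual-truncation terms in the Geometric Lemma contribute characters of the form $\rho_i^\vee|\cdot|^{-y_i}$; since $\rho$ itself is self-dual this equals $\rho|\cdot|^x$ whenever some segment in the Langlands data has $y_i=-x$, and the non-self-duality of $\rho|\cdot|^x$ alone does nothing to prevent that.

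Consequently both your value of $k$ and your proposed Langlands data for $D_{\rho|\cdot|^x}^{\max}(\pi)$ (truncate each segment beginning at $x$, leave $\pi_0$ untouched) are in general wrong for $x>0$. The actual formula in \cite[Theorem 7.1]{AM}, which the paper invokes repeatedly later (see the appendix), alters the tempered datum $(\phi,\ep)$ as well as the segments, and the interaction between the two is the real content of the result. What you have sketched is essentially the argument for the easy half $x<0$; the case $x>0$ requires the tempered classification as genuine additional input and is not merely a bookkeeping matter.
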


When $x=0$, the $\rho$-derivative is more difficult. 
As alternatives of $\rho$-derivative, following \cite{AM}, 
we introduce other two derivatives. 
We define the \emph{$\Delta_\rho[0,-1]$-derivative $D_{\Delta_\rho[0,-1]}^{(k)}(\pi)$}
and the \emph{$Z_\rho[0,1]$-derivative $D_{Z_\rho[0,1]}^{(k)}(\pi)$} 
as semisimple representations of $G_{n-2dk}$ satisfying 
\[
[\Jac_{P_{2dk}}(\pi)] = \Delta_\rho[0,-1]^k \boxtimes D_{\Delta_\rho[0,-1]}^{(k)}(\pi)
+ Z_\rho[0,1]^k \boxtimes D_{Z_\rho[0,1]}^{(k)}(\pi)
+ \sum_{i} \tau_i \boxtimes \pi_i, 
\]
where $\tau_i \boxtimes \pi_i$ is an irreducible representation of $\GL_{2dk}(F) \times G_{n-2dk}$ 
such that $\tau_i \not \cong \Delta_{\rho}[0,-1]^k, Z_\rho[0,1]^k$.
We set 
$D_{\Delta_\rho[0,-1]}^{\max}(\pi) = D_{\Delta_\rho[0,-1]}^{(k)}(\pi)$
(\resp $D_{Z_\rho[0,1]}^{\max}(\pi) = D_{Z_\rho[0,1]}^{(k)}(\pi)$)
when $D_{\Delta_\rho[0,-1]}^{(k)}(\pi) \not= 0$ but $D_{\Delta_\rho[0,-1]}^{(k+1)}(\pi) = 0$
(\resp $D_{Z_\rho[0,1]}^{(k)}(\pi) \not= 0$ but $D_{Z_\rho[0,1]}^{(k+1)}(\pi) = 0$). 

\begin{thm}[{\cite[Proposition 3.7]{AM}}, Section \ref{appA}]\label{der2}
Let $\pi$ be an irreducible representation of $G_n$. 
Suppose that $\pi$ is $\rho|\cdot|^{-1}$-reduced (\resp $\rho|\cdot|^1$-reduced). 
Then the same assertions in Theorem \ref{der} hold when $\rho|\cdot|^x$ is replaced with 
$\Delta_\rho[0,-1]$ (\resp $Z_\rho[0,1]$). 
\end{thm}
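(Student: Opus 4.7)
The plan is to reduce Theorem \ref{der2} to Theorem \ref{der} by expressing the $\Delta_\rho[0,-1]$-derivative as a composition of ordinary $\rho|\cdot|^x$-derivatives, exploiting the $\rho|\cdot|^{-1}$-reducedness hypothesis in an essential way. The corresponding statement for the $Z_\rho[0,1]$-derivative is then deduced by applying the MVW (or Aubert--Zelevinsky) involution, which intertwines Steinberg segments with their Zelevinsky duals, so below I focus on the $\Delta_\rho[0,-1]$-case.

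First, I would establish the key identity
\[
D_{\Delta_\rho[0,-1]}^{(k)}(\pi) = D_{\rho|\cdot|^{-1}}^{(k)} \circ D_{\rho}^{(k)}(\pi)
\]
for $\pi$ that is $\rho|\cdot|^{-1}$-reduced, using transitivity of Jacquet functors and the geometric lemma. Refining $\Jac_{P_{2dk}}(\pi)$ along the Levi $\GL_d \times \cdots \times \GL_d \times G_{n-2dk}$ (with $2k$ copies of $\GL_d$), the piece $\Delta_\rho[0,-1]^k \boxtimes D_{\Delta_\rho[0,-1]}^{(k)}(\pi)$ contributes $\rho^{\boxtimes k} \boxtimes (\rho|\cdot|^{-1})^{\boxtimes k} \boxtimes D_{\Delta_\rho[0,-1]}^{(k)}(\pi)$. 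Any competing irreducible $\tau \boxtimes \pi_\tau$ in $\Jac_{P_{2dk}}(\pi)$ with the same cuspidal support $k \cdot \{\rho, \rho|\cdot|^{-1}\}$ must correspond to a Zelevinsky multi-set containing at least one isolated copy of $\Delta_\rho[-1,-1] = \rho|\cdot|^{-1}$, and so forces a contribution $\rho|\cdot|^{-1} \boxtimes (\cdots)$ in $\Jac_{P_d}(\pi)$; this contradicts the $\rho|\cdot|^{-1}$-reducedness of $\pi$. Hence only $\Delta_\rho[0,-1]^k$ contributes, yielding the displayed identity as an equality of semisimple representations.

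Next, I would show $D_{\Delta_\rho[0,-1]}^{\max}(\pi)$ is irreducible. The subtle point is that $D_\rho^{\max}(\pi)$ is in general reducible, so Theorem \ref{der} cannot be iterated directly. Instead I would argue via Frobenius reciprocity: each irreducible summand $\pi'$ of $D_{\Delta_\rho[0,-1]}^{\max}(\pi)$ determines a nonzero embedding $\pi \hookrightarrow \Delta_\rho[0,-1]^k \rtimes \pi'$ with $k$ maximal. Since $\Delta_\rho[0,-1]^k$ is itself irreducible (a product of equal, hence pairwise non-linked, segments), uniqueness of the Langlands-type subrepresentation of $\Delta_\rho[0,-1]^k \rtimes \pi'$ pins down $\pi'$ uniquely.

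For the explicit description of the Langlands data of $\pi' = D_{\Delta_\rho[0,-1]}^{\max}(\pi)$ in terms of those of $\pi$, I would compose the effect of the $\rho$-step and the $\rho|\cdot|^{-1}$-step, using Theorem \ref{der} for the latter, and track which segments ending (or beginning) in $\rho$ get consumed. The principal cases appear in \cite[Proposition 3.7]{AM}; the remaining cases are handled by the algorithm of Section \ref{appA}. The main obstacle will be precisely this bookkeeping: when $\rho$ appears in many segments of the Langlands data of $\pi$, tracking which segments shorten and which $\rho$-ends are consumed at each step demands a careful case-by-case analysis, which the Appendix systematizes into a uniform algorithm.
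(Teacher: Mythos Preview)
Your proposal has two substantive gaps.

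First, the irreducibility argument is not correct as written. You assert that \emph{each} irreducible summand $\pi'$ of $D_{\Delta_\rho[0,-1]}^{\max}(\pi)$ yields an embedding $\pi \hookrightarrow \Delta_\rho[0,-1]^k \rtimes \pi'$, but Frobenius reciprocity requires $\Delta_\rho[0,-1]^k \boxtimes \pi'$ to be a \emph{quotient} of $\Jac_{P_{2dk}}(\pi)$, not merely a subquotient of its semisimplification. The argument that actually works (and is essentially that of \cite[Proposition~3.7]{AM}, which the paper simply cites) runs in the opposite direction: one first produces \emph{one} irreducible $\pi'_0$ with $\pi \hookrightarrow \Delta_\rho[0,-1]^k \rtimes \pi'_0$, checks via maximality of $k$ that $\pi'_0$ is itself $\Delta_\rho[0,-1]$-reduced and $\rho|\cdot|^{-1}$-reduced, and then computes $D_{\Delta_\rho[0,-1]}^{(k)}(\Delta_\rho[0,-1]^k \rtimes \pi'_0)=\pi'_0$ by the geometric lemma, forcing every summand of $D_{\Delta_\rho[0,-1]}^{\max}(\pi)$ to equal $\pi'_0$. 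Your ``Langlands-type uniqueness'' step does not supply this, since $\pi'$ need not be tempered and recovering $\pi'$ from $\soc(\Delta_\rho[0,-1]^k\rtimes\pi')$ is exactly the derivative computation you are trying to establish.

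Second, and more seriously, the reduction of the $Z_\rho[0,1]$-case to the $\Delta_\rho[0,-1]$-case via an involution does not deliver what the theorem claims. The MVW involution is essentially contragredient and does not exchange $\Delta$ with $Z$; only the Aubert--Zelevinsky involution does. Even granting that, duality transports irreducibility, but the theorem also asserts an \emph{explicit} passage between the Langlands data of $\pi$ and of $D_{Z_\rho[0,1]}^{\max}(\pi)$. Going through the involution would require computing the Langlands data of $\hat\pi$ from those of $\pi$---precisely the problem \cite{AM} is designed to solve, using the $Z_\rho[0,1]$-derivative as one of its basic tools. So the argument is circular. The paper's Appendix~\ref{appA} proceeds quite differently: for the $\Delta_\rho[0,-1]$-derivative it shows directly that only the $\GL$-part of the Langlands data moves, via the left $\Delta_\rho[0,-1]$-derivative on $\GL$, while the tempered part is untouched; for the $Z_\rho[0,1]$-derivative, which genuinely interacts with the tempered datum $\pi(\phi,\ep)$, it gives a case-by-case formula for representations of the shape $L((\rho|\cdot|^{-1})^s,\Delta_\rho[0,-1]^t;\pi(\phi,\ep))$ and then an algorithm (Algorithm~\ref{algD}) reducing the general good-parity case to this one. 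Your composition identity $D_{\Delta_\rho[0,-1]}^{(k)}=D_{\rho|\cdot|^{-1}}^{(k)}\circ D_\rho^{(k)}$ under $\rho|\cdot|^{-1}$-reducedness is correct and pleasant, but since $D_\rho^{(k)}$ is outside the scope of Theorem~\ref{der} it does not by itself yield the explicit Langlands description either.
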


To deal with these derivatives uniformly,
we introduce the following notation. 
\begin{defi}\label{def.der}
Fix a segment $[x,y]_\rho$. 
Let $\pi$ be a smooth representation of $G_n$ of finite length. 

\begin{enumerate}
\item
If $[x,y]_\rho$ does not contain $\rho$, then we set
\begin{align*}
D^{\max}_{\rho|\cdot|^{x}, \dots, \rho|\cdot|^{y}}(\pi) 
&\coloneqq 
D^{\max}_{\rho|\cdot|^y} \circ \dots \circ D^{\max}_{\rho|\cdot|^x}(\pi), \\
D^{\max}_{\rho|\cdot|^{y}, \dots, \rho|\cdot|^{x}}(\pi) 
&\coloneqq 
D^{\max}_{\rho|\cdot|^x} \circ \dots \circ D^{\max}_{\rho|\cdot|^y}(\pi).
\end{align*}

\item
Suppose that $[x,y]_\rho$ contains $\rho$, and that $y < 0$. 
Then we set
\[
D^{\max}_{\rho|\cdot|^{x}, \dots, \rho|\cdot|^{y}}(\pi)
\coloneqq
D^{\max}_{\rho|\cdot|^y} \circ \dots \circ D^{\max}_{\rho|\cdot|^{-2}}
\circ \left(D^{\max}_{\Delta_\rho[0,-1]} \circ D_{\rho|\cdot|^{-1}}^{\max}\right) \circ 
D^{\max}_{\rho|\cdot|^1} \circ \dots \circ D^{\max}_{\rho|\cdot|^x}(\pi).
\]
Moreover, if 
\[
D^{\max}_{\rho|\cdot|^{x}, \dots, \rho|\cdot|^{y}}(\pi) 
= 
D^{(k_y)}_{\rho|\cdot|^y} \circ \dots \circ D^{(k_{-2})}_{\rho|\cdot|^{-2}}
\circ \left( D^{(k_0)}_{\Delta_\rho[0,-1]} \circ D_{\rho|\cdot|^{-1}}^{(k_{-1})} \right) \circ 
D^{(k_1)}_{\rho|\cdot|^1} \circ \dots \circ D^{(k_x)}_{\rho|\cdot|^x}(\pi),
\]
we formally write 
\[
D^{\max}_{\rho|\cdot|^{x}, \dots, \rho|\cdot|^{y}}(\pi) 
= 
D^{(k_y)}_{\rho|\cdot|^y} \circ \dots \circ D^{(k_x)}_{\rho|\cdot|^x}(\pi). 
\]

\item
Suppose that $[x,y]_\rho$ contains $\rho$, and that $x > 0$. 
Then we set
\[
D^{\max}_{\rho|\cdot|^{y}, \dots, \rho|\cdot|^{x}}(\pi) 
\coloneqq
D^{\max}_{\rho|\cdot|^x} \circ \dots \circ D^{\max}_{\rho|\cdot|^{2}}
\circ \left( D^{\max}_{Z_\rho[0,1]} \circ D_{\rho|\cdot|^{1}}^{\max} \right) \circ 
D^{\max}_{\rho|\cdot|^{-1}} \circ \dots \circ D^{\max}_{\rho|\cdot|^y}(\pi).
\]
Moreover, if 
\[
D^{\max}_{\rho|\cdot|^{y}, \dots, \rho|\cdot|^{x}}(\pi) 
= 
D^{(k_x)}_{\rho|\cdot|^x} \circ \dots \circ D^{(k_2)}_{\rho|\cdot|^{2}}
\circ \left( D^{(k_0)}_{Z_\rho[0,1]} \circ D_{\rho|\cdot|^{1}}^{(k_1)} \right) \circ 
D^{(k_{-1})}_{\rho|\cdot|^{-1}} \circ \dots \circ D^{(k_y)}_{\rho|\cdot|^y}(\pi),
\]
we formally write
\[
D^{\max}_{\rho|\cdot|^{y}, \dots, \rho|\cdot|^{x}}(\pi) 
= 
D^{(k_x)}_{\rho|\cdot|^x} \circ \dots \circ D^{(k_y)}_{\rho|\cdot|^y}(\pi).
\]
\end{enumerate}
\end{defi}

By Theorems \ref{der} and \ref{der2}, if $\pi$ is irreducible, then 
$D^{\max}_{\rho|\cdot|^{x}, \dots, \rho|\cdot|^{y}}(\pi)$ is also irreducible 
whenever it is defined.
Moreover, 
the Langlands data for $D^{\max}_{\rho|\cdot|^{x}, \dots, \rho|\cdot|^{y}}(\pi)$
can be described from those for $\pi$ explicitly, and vice versa. 
Similar statements also hold for $D^{\max}_{\rho|\cdot|^{y}, \dots, \rho|\cdot|^{x}}(\pi)$.

%\subsection{The case where $|s| \gg 0$}
\subsection{The case where $|s| \gg 0$}
In this subsection, 
we study $\soc(u_\rho(a,b)|\cdot|^s \rtimes \pi)$ when $|s| \gg 0$ or $s \not\in (1/2)\Z$. 

\begin{prop}\label{s>>0}
Assume one of the following: 
\begin{itemize}
\item
$s > (a-1)/2$; 
\item
$s < -(b-1)/2$; or
\item
$s \not\in (1/2)\Z$. 
\end{itemize} 
Then for any $\pi \in \Irr(G_n)$, 
the socle $\soc(u_\rho(a,b)|\cdot|^s \rtimes \pi)$ is irreducible. 
Moreover, 
it appears in the semisimplification $[u_\rho(a,b)|\cdot|^s \rtimes \pi]$ with multiplicity one. 
\end{prop}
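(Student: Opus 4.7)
The plan is to realize $\Pi_s = u_\rho(a,b)|\cdot|^s \rtimes \pi$ as a subrepresentation of a Langlands standard module $M$ for $G_n$; the Langlands subrepresentation theorem then yields a unique irreducible subrepresentation $\pi'$ of $M$, and I will argue that $\pi' = \soc(\Pi_s)$ appears with multiplicity one in $[\Pi_s]$.

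First, I write $u_\rho(a,b)|\cdot|^s$ as the unique irreducible subrepresentation of
\[
\prod_{j=1}^{b} \Delta_\rho\!\left[\tfrac{a-b}{2}+j-1+s,\ -\tfrac{a+b}{2}+j+s\right],
\]
whose segments have sums $2j-b-1+2s$, strictly increasing in $j$. Combining with the Langlands data $\pi = L(\Delta_{\rho_1}[x_1,y_1], \dots, \Delta_{\rho_r}[x_r,y_r]; \pi_0)$ via transitivity of parabolic induction yields an embedding of $\Pi_s$ into an induced representation $N$ whose inducing segments consist of the Speh column segments shifted by $s$, the Langlands segments of $\pi$, and the tempered representation $\pi_0$. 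The task is then to reorder these segments so that their sums are non-decreasing and all strictly negative, producing a Langlands standard module $M$ for $G_n$, and to verify that the embedding $\Pi_s \hookrightarrow N$ extends to an embedding $\Pi_s \hookrightarrow M$.

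The three case-hypotheses are precisely what make this reordering possible. For $s < -(b-1)/2$, every Speh column sum $2j-b-1+2s$ is strictly negative, so the multiset of all sums in $N$ can be sorted in non-decreasing order while remaining negative, yielding valid Langlands data for $G_n$. For $s > (a-1)/2$, a symmetric argument applies: the Speh is expressed as the unique irreducible quotient of a standard module with strictly positive, non-increasing sums, and one works with the Langlands quotient form and cosocles, relating back to the socle via duality. For $s \not\in (1/2)\Z$, the supercuspidal support of $u_\rho(a,b)|\cdot|^s$ lies in $\{\rho|\cdot|^x : x \in s+\Z\}$, disjoint from the support $\{\rho|\cdot|^x : x \in (1/2)\Z\}$ of $\pi$ (as $\pi$ is of Arthur type); consequently the Speh segments commute freely past the segments of $\pi$'s Langlands data, and the reordering is automatic.

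Once the embedding $\Pi_s \hookrightarrow M$ is established, the Langlands subrepresentation theorem gives a unique irreducible $\pi' \subseteq M$, and since $[M] = [\Pi_s]$ (permuting the inducing data preserves the Jordan--Hölder content), $\pi'$ appears with multiplicity one in $[\Pi_s]$; consequently $\soc(\Pi_s) = \pi'$ is irreducible with multiplicity one. The main obstacle is the reordering step: swapping two segments inside a parabolic induction is not free, and the embedding into $N$ must be tracked through each intermediate swap. The role of the case-hypotheses is precisely to ensure that every required swap is between an unlinked pair so that the associated intertwining operator is an isomorphism on the relevant subquotient and the embedding is carried through to $M$.
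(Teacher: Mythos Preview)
Your approach is genuinely different from the paper's, but it has real gaps.

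First, the claim $[M]=[\Pi_s]$ is false. You have $\Pi_s = u_\rho(a,b)|\cdot|^s \rtimes \pi$ with $\pi$ irreducible; embedding $\pi$ and the Speh into their respective standard modules gives $\Pi_s \hookrightarrow N$, and $[N]=[M]$ by permutation, but $\Pi_s$ is in general a \emph{proper} subrepresentation of $N$, so $[\Pi_s]$ is only a sub-multiset of $[M]$. (This particular slip is harmless for the multiplicity-one conclusion, since $[\Pi_s]\subset[M]$ still gives multiplicity $\leq 1$; but it signals that the standard-module picture is not doing what you think.)

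The serious gap is the reordering step. Your assertion that ``every required swap is between an unlinked pair'' is not justified, and it is false in general: for $s<-(b-1)/2$, say, a segment $\Delta_\rho[x',y']$ in the Langlands data of $\pi$ can perfectly well be linked with one of the Speh columns $\Delta_\rho[B+s+j,-A+s+j]$, and nothing in the hypothesis on $s$ prevents this. When two linked segments with $\mathrm{sum}(\Delta_j)>\mathrm{sum}(\Delta'_i)$ must be swapped, the intertwining operator $\Delta_j\times\Delta'_i \to \Delta'_i\times\Delta_j$ has nontrivial kernel, and there is no reason your embedding $\Pi_s\hookrightarrow N$ survives the swap. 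Consequently you have not produced an embedding $\Pi_s\hookrightarrow M$, and the Langlands subrepresentation theorem cannot be invoked. The $s>(a-1)/2$ case inherits the same problem, and your appeal to ``duality'' there is too vague: contragredient/MVW relates the socle of $\Pi_s$ to the \emph{cosocle} of a different induced representation, not to a socle you have already controlled. Finally, in the $s\notin(1/2)\Z$ case you assume $\pi$ is of Arthur type to get disjoint supercuspidal supports, but the proposition is stated for arbitrary $\pi\in\Irr(G_n)$.

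The paper avoids all of this by using derivatives instead of the Langlands classification. For any irreducible sub $\pi'\subset\Pi_s$, one applies a composite of highest derivatives $D^{\max}_{\rho|\cdot|^{z}}$ (and, when $0$ lies in the relevant range, $D^{\max}_{Z_\rho[0,1]}$ or $D^{\max}_{\Delta_\rho[0,-1]}$) that successively strip off the rows of the shifted Speh; the hypotheses on $s$ are exactly what guarantee that these exponents do not collide with their negatives, so the derivatives see only the Speh part and the result on $\pi'$ equals the same composite applied to $\pi$. Since highest derivatives of irreducibles are irreducible and the process is invertible, $\pi'$ is uniquely determined by $\pi$, giving both irreducibility of the socle and multiplicity one.
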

\begin{proof}
Let $\pi'$ be an irreducible subrepresentation of $u_\rho(a,b)|\cdot|^s \rtimes \pi$.
Note that 
\[
u_\rho(a,b)|\cdot|^s = 
\begin{pmatrix}
B+s & \ldots & A+s \\
\vdots & \ddots & \vdots \\
-A+s & \ldots & -B+s
\end{pmatrix}_\rho
\]
with $A = (a+b)/2-1$ and $B = (a-b)/2$. 
Hence the condition $s > (a-1)/2$ (\resp $s < -(b-1)/2$) 
implies that $-B+s > 0$ and $-(-B+s) < -A+s$ (\resp $-B+s < 0$ and $-(-B+s) > A+s$).
Therefore, if $s > (a-1)/2$, then 
\begin{align*}
&D^{\max}_{\rho|\cdot|^{-A+s}, \dots, \rho|\cdot|^{-B+s}} 
\circ \dots \circ D^{\max}_{\rho|\cdot|^{B+s}, \dots, \rho|\cdot|^{A+s}}(\pi')
\\&=
D^{\max}_{\rho|\cdot|^{-A+s}, \dots, \rho|\cdot|^{-B+s}} 
\circ \dots \circ D^{\max}_{\rho|\cdot|^{B+s}, \dots, \rho|\cdot|^{A+s}}(\pi),
\end{align*}
whereas if $s < -(b-1)/2$, then
\begin{align*}
&D^{\max}_{\rho|\cdot|^{A+s}, \dots, \rho|\cdot|^{-B+s}} 
\circ \dots \circ D^{\max}_{\rho|\cdot|^{B+s}, \dots, \rho|\cdot|^{-A+s}}(\pi')
\\&=
D^{\max}_{\rho|\cdot|^{A+s}, \dots, \rho|\cdot|^{-B+s}} 
\circ \dots \circ D^{\max}_{\rho|\cdot|^{B+s}, \dots, \rho|\cdot|^{-A+s}}(\pi).
\end{align*}
These equations determine $\pi'$ uniquely. 
Moreover, it follows that $\pi'$ appears in $[u_\rho(a,b)|\cdot|^s \rtimes \pi]$ with multiplicity one. 
When $s \not\in (1/2)\Z$, the same argument works.
\end{proof}

\begin{ex}\label{ex.35}
Let $\pi$ be an irreducible representation of $G_n$. 
Then $Z_\rho[-1,2] \rtimes \pi$ has a unique irreducible subrepresentation. 
If 
\[
D^{\max}_{\rho|\cdot|^2} 
\circ \left( D^{\max}_{Z_\rho[0,1]} \circ D^{\max}_{\rho|\cdot|^1} \right)
\circ D^{\max}_{\rho|\cdot|^{-1}}(\pi)
= 
D^{(k_{2})}_{\rho|\cdot|^2} 
\circ \left( D^{(k_{0})}_{Z_\rho[0,1]} \circ D^{(k_1)}_{\rho|\cdot|^1} \right)
\circ D^{(k_{-1})}_{\rho|\cdot|^{-1}}(\pi), 
\]
then $\pi' = \soc(Z_\rho[-1,2] \rtimes \pi)$ is uniquely determined by 
\begin{align*}
D^{\max}_{\rho|\cdot|^2} 
\circ \left( D^{\max}_{Z_\rho[0,1]} \circ D^{\max}_{\rho|\cdot|^1} \right)
\circ D^{\max}_{\rho|\cdot|^{-1}}(\pi')
&= 
D^{(k_{2}+1)}_{\rho|\cdot|^2} 
\circ \left( D^{(k_{0}+1)}_{Z_\rho[0,1]} \circ D^{(k_1)}_{\rho|\cdot|^1} \right)
\circ D^{(k_{-1}+1)}_{\rho|\cdot|^{-1}}(\pi')
\\&=
D^{(k_{2})}_{\rho|\cdot|^2} 
\circ \left( D^{(k_{0})}_{Z_\rho[0,1]} \circ D^{(k_1)}_{\rho|\cdot|^1} \right)
\circ D^{(k_{-1})}_{\rho|\cdot|^{-1}}(\pi).
\end{align*}
Note that the last equation also determines $\pi$ uniquely from $\pi'$.
\end{ex}

%\subsection{The middle case}
\subsection{The middle case}
Next, we consider the case where $0 < s \leq (a-1)/2$ or $-(b-1)/2 \leq s < 0$.
In this case, by Propositions \ref{middle} and \ref{image} below, 
we reduce the problem to the case where $s = 0$. 

\begin{prop}\label{middle}
Suppose that $s \in (1/2)\Z$. 
Let $\pi \in \Irr(G_n)$ be of Arthur type, 
and $\pi'$ be an irreducible subrepresentation of $u_\rho(a,b)|\cdot|^s \rtimes \pi$. 

\begin{enumerate}
\item
If $0 < s \leq (a-1)/2$, then 
there exists a unique irreducible summand $\sigma$ of $u_\rho(a-2s, b) \rtimes \pi$ 
such that $\pi' = \soc(u_\rho(2s, b)|\cdot|^{\half{a}} \rtimes \sigma)$. 

\item
If $-(b-1)/2 \leq s < 0$, 
then 
there exists a unique irreducible summand $\sigma$ of $u_\rho(a, b+2s) \rtimes \pi$ 
such that $\pi' = \soc(u_\rho(a, -2s)|\cdot|^{-\half{b}} \rtimes \sigma)$. 
\end{enumerate}
Moreover, in the both cases, 
$\pi'$ appears in the semisimplification $[u_\rho(a,b)|\cdot|^s \rtimes \pi]$ with multiplicity one. 
\end{prop}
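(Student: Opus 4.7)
The plan is to reduce the middle range $0 \ne s \in (1/2)\Z$ to the regime already handled by Proposition \ref{s>>0}, by peeling off a ``slab'' of the Speh representation $u_\rho(a,b)|\cdot|^s$ and then exploiting the Arthur-type semisimplicity of $\pi$ to produce the summand $\sigma$. The key first ingredient is a Speh block decomposition. For case (1) I would establish the embedding
$$u_\rho(a,b)|\cdot|^s \hookrightarrow u_\rho(2s,b)|\cdot|^{a/2} \times u_\rho(a-2s,b)$$
obtained by cutting the exponent matrix horizontally after row $2s$, and for case (2) the symmetric column cut after column $-2s$:
$$u_\rho(a,b)|\cdot|^s \hookrightarrow u_\rho(a,-2s)|\cdot|^{-b/2} \times u_\rho(a,b+2s).$$
In each case both sides embed as irreducible subrepresentations of a common product of $Z_\rho$'s (resp.\ $\Delta_\rho$'s), and the factorization is read off from the Langlands data of the two blocks.

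Parabolically inducing each embedding and invoking that $u_\rho(a-2s,b) \rtimes \pi$ (resp.\ $u_\rho(a,b+2s) \rtimes \pi$) is a multiplicity-free direct sum $\bigoplus_\sigma \sigma$ of irreducibles of Arthur type, by \cite[Proposition 2.4.3]{Ar} (as recalled at the end of Section \ref{s.langlands}), one obtains
$$u_\rho(a,b)|\cdot|^s \rtimes \pi \hookrightarrow \bigoplus_\sigma u_\rho(2s,b)|\cdot|^{a/2} \rtimes \sigma$$
in case (1), and the analogous embedding in case (2). Any irreducible subrepresentation $\pi'$ therefore embeds into exactly one summand $u_\rho(2s,b)|\cdot|^{a/2} \rtimes \sigma$. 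Since $s \le (a-1)/2$ forces $a/2 > (2s-1)/2$, Proposition \ref{s>>0} applies to this summand with $(a,b,s) \rightsquigarrow (2s,b,a/2)$, showing that its socle is irreducible and equal to $\pi'$. Uniqueness of $\sigma$ then follows by applying the derivative algorithm from the proof of Proposition \ref{s>>0} directly to $\pi'$: this algorithm strips off $u_\rho(2s,b)|\cdot|^{a/2}$ via a nested sequence of highest $\rho|\cdot|^x$-derivatives and returns $\sigma$ as an intrinsic invariant of $\pi'$. Case (2) is parallel, with the $Z_\rho[0,1]$- and $\rho|\cdot|^x$-derivatives of Definition \ref{def.der} playing the analogous role.

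The main obstacle will be the multiplicity-one assertion. The embedding above yields the bound
$$\bigl[u_\rho(a,b)|\cdot|^s \rtimes \pi\bigr] \le \sum_\sigma \bigl[u_\rho(2s,b)|\cdot|^{a/2} \rtimes \sigma\bigr]$$
of formal sums of irreducibles, and Proposition \ref{s>>0} shows that $\pi'$ appears with multiplicity one as the socle of a single summand; but to conclude one must rule out that $\pi'$ also occurs as a non-socle subquotient of some other summand $u_\rho(2s,b)|\cdot|^{a/2} \rtimes \sigma'$ with $\sigma' \ne \sigma$. I expect this to follow from the bijective ``vice versa'' direction of Theorem \ref{der}: applying the relevant highest $\rho|\cdot|^x$-derivative both to $\pi'$ and to the induced representation $u_\rho(2s,b)|\cdot|^{a/2} \rtimes \sigma'$ (whose only irreducible contribution on that side is $\sigma'$) forces $\sigma' = \sigma$, yielding the desired multiplicity one.
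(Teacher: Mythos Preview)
Your proposal is correct and matches the paper's proof essentially step for step: the same Speh-block embedding $u_\rho(a,b)|\cdot|^s \hookrightarrow u_\rho(2s,b)|\cdot|^{a/2}\times u_\rho(a-2s,b)$ (and its column analogue for (2)), the same application of Proposition~\ref{s>>0} to $u_\rho(2s,b)|\cdot|^{a/2}\rtimes\sigma$ via the inequality $a/2>(2s-1)/2$, the same derivative-based recovery of $\sigma$ from $\pi'$, and the same appeal to Arthur's multiplicity-freeness. Your explicit flagging of the cross-summand issue in the multiplicity-one step is more careful than the paper's one-sentence treatment, but the resolution you sketch---that the highest-derivative sequence recovers $\sigma$ already at the subquotient level---is precisely what underlies the paper's reference back to Proposition~\ref{s>>0} and Example~\ref{ex.35}.
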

\begin{proof}
We only prove the case where $0 < s \leq (a-1)/2$. 
The other case is proven similarly. 
\par

When $0 < s \leq (a-1)/2$, 
since $u_\rho(a,b)|\cdot|^s \hookrightarrow u_\rho(2s, b)|\cdot|^{\half{a}} \times u_\rho(a-2s, b)$, 
we have $\pi' \hookrightarrow u_\rho(2s, b)|\cdot|^{\half{a}} \rtimes \sigma$ for 
some irreducible summand $\sigma$ of $u_\rho(a-2s, b) \rtimes \pi$. 
Since $a/2 > (2s-1)/2$, by Proposition \ref{s>>0}, 
the socle $\soc(u_\rho(2s, b)|\cdot|^{\half{a}} \rtimes \sigma)$ is irreducible.
Hence $\pi' = \soc(u_\rho(2s, b)|\cdot|^{\half{a}} \rtimes \sigma)$.
By this equation, $\sigma$ is uniquely determined by $\pi'$ using derivatives
(see the proof of Proposition \ref{s>>0} and Example \ref{ex.35}). 
Moreover, since $u_\rho(a-2s, b) \rtimes \pi$ is 
a multiplicity-free sum of irreducible representations (\cite[Proposition 2.4.3]{Ar}), 
and since $\pi'$ appears in $[u_\rho(2s, b)|\cdot|^{\half{a}} \rtimes \sigma]$ with multiplicity one, 
we conclude that $\pi'$ appears in $[u_\rho(a,b)|\cdot|^s \rtimes \pi]$ with multiplicity one. 
\end{proof}

By Proposition \ref{middle}, 
when $0 < s \leq (a-1)/2$, we obtained a well-defined injective map
\begin{align*}
\left\{ \pi' \hookrightarrow u_\rho(a,b)|\cdot|^s \rtimes \pi \right\}
\rightarrow 
\left\{ \sigma \hookrightarrow u_\rho(a-2s, b) \rtimes \pi \right\}
\end{align*}
characterized by $\pi' = \soc(u_\rho(2s, b)|\cdot|^{\half{a}} \rtimes \sigma)$. 
Similarly, when $-(b-1)/2 \leq s < 0$, we obtained a well-defined injective map
\begin{align*}
\left\{ \pi' \hookrightarrow u_\rho(a,b)|\cdot|^s \rtimes \pi \right\}
\rightarrow 
\left\{ \sigma \hookrightarrow u_\rho(a, b+2s) \rtimes \pi \right\}
\end{align*}
characterized by $\pi' = \soc(u_\rho(a, -2s)|\cdot|^{-\half{b}} \rtimes \sigma)$. 
We detemine the images of these maps. 

\begin{prop}\label{image}
Suppose that $s \in (1/2)\Z$, 
and that $\pi \in \Irr(G_n)$ is of Arthur type.
\begin{enumerate}
\item
When $0 < s \leq (a-1)/2$, 
for an irreducible summand $\sigma$ of $u_\rho(a-2s, b) \rtimes \pi$, 
the following are equivalent. 
\begin{enumerate}
\item
$\soc(u_\rho(2s, b)|\cdot|^{\half{a}} \rtimes \sigma)$
is a subrepresentation of $u_\rho(a,b)|\cdot|^s \rtimes \pi$; 
\item
if 
\begin{align*}
&D^{\max}_{\rho|\cdot|^{B-s+1}, \dots, \rho|\cdot|^{A-s+1}}
\circ \dots \circ
D^{\max}_{\rho|\cdot|^{B+s}, \dots, \rho|\cdot|^{A+s}}(\pi)
\\&= 
\left(D^{(k_{2s,b})}_{\rho|\cdot|^{A-s+1}} \circ \dots \circ D^{(k_{2s,1})}_{\rho|\cdot|^{B-s+1}}\right)
\circ \dots \circ
\left(D^{(k_{1,b})}_{\rho|\cdot|^{A+s}} \circ \dots \circ D^{(k_{1,1})}_{\rho|\cdot|^{B+s}}\right)(\pi),
\end{align*}
then
\[
\left(D^{(k_{2s,b})}_{\rho|\cdot|^{A-s+1}} \circ \dots \circ D^{(k_{2s,1})}_{\rho|\cdot|^{B-s+1}}\right)
\circ \dots \circ
\left(D^{(k_{1,b})}_{\rho|\cdot|^{A+s}} \circ \dots \circ D^{(k_{1,1})}_{\rho|\cdot|^{B+s}}\right)(\sigma) 
\not= 0.
\]
\end{enumerate}

\item
When $-(b-1)/2 \leq s < 0$, 
for an irreducible summand $\sigma$ of $u_\rho(a, b+2s) \rtimes \pi$, 
the following are equivalent. 
\begin{enumerate}
\item
$\soc(u_\rho(a, -2s)|\cdot|^{-\half{b}} \rtimes \sigma)$
is a subrepresentation of $u_\rho(a,b)|\cdot|^s \rtimes \pi$; 
\item
if 
\begin{align*}
&D^{\max}_{\rho|\cdot|^{B-s-1}, \dots, \rho|\cdot|^{-A-s-1}}
\circ \dots \circ
D^{\max}_{\rho|\cdot|^{B+s}, \dots, \rho|\cdot|^{-A+s}}(\pi)
\\&=
\left(D^{(k_{a,-2s})}_{\rho|\cdot|^{-A-s-1}} \circ \dots \circ D^{(k_{1,-2s})}_{\rho|\cdot|^{B-s-1}}\right)
\circ \dots \circ
\left(D^{(k_{a,1})}_{\rho|\cdot|^{-A+s}} \circ \dots \circ D^{(k_{1,1})}_{\rho|\cdot|^{B+s}}\right)(\pi),
\end{align*}
then 
\[
\left(D^{(k_{a,-2s})}_{\rho|\cdot|^{-A-s-1}} \circ \dots \circ D^{(k_{1,-2s})}_{\rho|\cdot|^{B-s-1}}\right)
\circ \dots \circ
\left(D^{(k_{a,1})}_{\rho|\cdot|^{-A+s}} \circ \dots \circ D^{(k_{1,1})}_{\rho|\cdot|^{B+s}}\right)(\sigma) 
\not= 0. 
\]
\end{enumerate}
\end{enumerate}
\end{prop}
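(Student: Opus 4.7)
I would prove part (1); part (2) is entirely analogous. Denote by $\partial_\pi$ the specific composition
\[
\partial_\pi \coloneqq
D^{(k_{2s,b})}_{\rho|\cdot|^{A-s+1}}\circ\cdots\circ
D^{(k_{1,1})}_{\rho|\cdot|^{B+s}}
\]
appearing on the right-hand side of (b), whose multiplicities $k_{i,j}$ are pinned down by the requirement that $\partial_\pi\pi$ realize the highest derivative of $\pi$ with respect to the ordered list of cuspidal exponents constituting $u_\rho(2s,b)|\cdot|^{\half{a}}$, read row by row from top to bottom and left to right within each row. By construction $\partial_\pi$ is adapted to stripping $u_\rho(2s,b)|\cdot|^{\half{a}}$ off the outside of an induced representation.

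The strategy is to detect the embedding $\pi' = \soc(u_\rho(2s,b)|\cdot|^{\half{a}}\rtimes\sigma)\hookrightarrow\Pi_s$ by applying $\partial_\pi$. First, using iterated Theorems \ref{der} and \ref{der2} (with $\Delta_\rho[0,-1]$ or $Z_\rho[0,1]$ derivatives inserted when a row crosses $0$, per Definition \ref{def.der}), the highest derivative of $\pi'$ stripping $u_\rho(2s,b)|\cdot|^{\half{a}}$ returns $\sigma$, and splitting $k_{i,j}$ into the ``matrix-determined'' part needed to remove $u_\rho(2s,b)|\cdot|^{\half{a}}$ from $\pi'$ and a residual $\ell_{i,j}$, one sees that $\partial_\pi\pi'$ equals the $\ell_{i,j}$-derivative of $\sigma$; in particular, $\partial_\pi\pi'\neq 0$ if and only if $\partial_\pi\sigma\neq 0$.

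The second, and main, computation is $\partial_\pi\Pi_s$. Using Tadi\'c's formula for $\Jac$ applied to $u_\rho(a,b)|\cdot|^s\rtimes\pi$, every exponent stripped by $\partial_\pi$ can come from either $u_\rho(a,b)|\cdot|^s$ or $\pi$; combining the maximality of $(k_{i,j})$ for $\pi$ with the fact that $u_\rho(a,b)|\cdot|^s$ maximally contributes $u_\rho(2s,b)|\cdot|^{\half{a}}$ from its top $2s$ rows (leaving behind $u_\rho(a-2s,b)$) yields $\partial_\pi\Pi_s = u_\rho(a-2s,b)\rtimes\partial_\pi\pi$. Using the semisimple decomposition $u_\rho(a-2s,b)\rtimes\pi = \bigoplus_i\sigma_i$ from \cite[Proposition 2.4.3]{Ar} together with the compatibility of derivatives with direct sums, this rearranges as $\bigoplus_i\partial_\pi\sigma_i$ (summed over those $i$ with $\partial_\pi\sigma_i\neq 0$). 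Combining with the first step, for each $i$ the condition $\pi'_i\hookrightarrow\Pi_s$ becomes equivalent to $\partial_\pi\pi'_i = \partial_\pi\sigma_i$ embedding into $\partial_\pi\Pi_s$; by multiplicity-freeness this collapses to the single condition $\partial_\pi\sigma_i\neq 0$, which is exactly (b).

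The main obstacle I expect is the second step, namely the precise computation of $\partial_\pi\Pi_s$ via Tadi\'c's formula. One must verify that the myriad cross-terms arising from distributing the exponent stripping between $u_\rho(a,b)|\cdot|^s$ and $\pi$ collapse, under the maximality constraints on $(k_{i,j})$, to the clean split described above: strip all of $u_\rho(2s,b)|\cdot|^{\half{a}}$ from $u_\rho(a,b)|\cdot|^s$ and apply $\partial_\pi$ to $\pi$. A further delicacy arises when the self-dual exponent $x=0$ lies in the range of $\partial_\pi$, since one must replace the naive $\rho|\cdot|^x$-derivative by $\Delta_\rho[0,-1]$ or $Z_\rho[0,1]$ derivatives; this is handled by Theorem \ref{der2} and the algorithm of Appendix \ref{appA}.
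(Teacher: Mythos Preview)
Your direction (a)$\Rightarrow$(b) is essentially the paper's argument: compute the highest derivative of $\Pi_s$ along the prescribed sequence of exponents, obtaining $u_\rho(a-2s,b)\rtimes\partial_\pi\pi$ up to semisimplification, and read off the necessary non-vanishing of $\partial_\pi\sigma$.

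The gap is in (b)$\Rightarrow$(a). You assert that ``$\pi'_i\hookrightarrow\Pi_s$ becomes equivalent to $\partial_\pi\pi'_i=\partial_\pi\sigma_i$ embedding into $\partial_\pi\Pi_s$,'' but derivatives are defined on semisimplifications of Jacquet modules: they detect \emph{constituents}, not \emph{subrepresentations}. Knowing that $\partial_\pi\sigma_i$ occurs in $\partial_\pi[\Pi_s]$ tells you only that some constituent of $\Pi_s$ contributes $\partial_\pi\sigma_i$; it does not force $\pi'_i$ itself to sit inside $\Pi_s$ as a subrepresentation. Multiplicity-freeness of $\partial_\pi\Pi_s$ does not close this gap. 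A related problem is the identification $\partial_\pi\Pi_s=\bigoplus_i\partial_\pi\sigma_i$: the paper's formula gives $\partial_\pi\Pi_s=u_\rho(a-2s,b)\rtimes\partial_\pi\pi$, while $\bigoplus_i\partial_\pi\sigma_i=\partial_\pi\bigl(u_\rho(a-2s,b)\rtimes\pi\bigr)$; these agree only if the exponents in $\partial_\pi$ do not interact with $u_\rho(a-2s,b)$ (including through the contragredient term in the geometric lemma), which you have not checked.

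The paper handles (b)$\Rightarrow$(a) by a different mechanism. One always has an embedding $\pi'\hookrightarrow u_\rho(2s,b)|\cdot|^{a/2}\times u_\rho(a-2s,b)\rtimes\pi$, since $\sigma\hookrightarrow u_\rho(a-2s,b)\rtimes\pi$. The point is then to show this embedding factors through the subrepresentation $u_\rho(a,b)|\cdot|^s\rtimes\pi$. For this, the paper uses that $u_\rho(a,b)|\cdot|^s$ is the \emph{unique} irreducible subrepresentation of $u_\rho(2s,b)|\cdot|^{a/2}\times u_\rho(a-2s,b)$, characterized among all its constituents by the non-vanishing of a specific sequence of \emph{left} derivatives on the $\GL$ side. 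Condition (b) guarantees that the highest-derivative multiplicities of $\pi'$ along the same sequence are exactly $k_{i,j}+1$ (the extra $+1$ coming from stripping $u_\rho(2s,b)|\cdot|^{a/2}$), which matches this characterization and forces the factoring. That factoring step through the unique-socle structure of the $\GL$-product is the idea missing from your outline.
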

\begin{proof}
We only prove (1). 
The proof of (2) is similar. 
From now, we assume that $0 < s \leq (a-1)/2$.
\par

Note that 
\begin{align*}
&D^{\max}_{\rho|\cdot|^{B-s+1}, \dots, \rho|\cdot|^{A-s+1}}
\circ \dots \circ
D^{\max}_{\rho|\cdot|^{B+s}, \dots, \rho|\cdot|^{A+s}}
\left( u_\rho(a,b)|\cdot|^s \rtimes \pi \right)
\\&= 
u_\rho(a-2s,b) \rtimes 
D^{\max}_{\rho|\cdot|^{B-s+1}, \dots, \rho|\cdot|^{A-s+1}}
\circ \dots \circ
D^{\max}_{\rho|\cdot|^{B+s}, \dots, \rho|\cdot|^{A+s}}(\pi)
\end{align*}
up to semisimplification.
Hence, if 
$\soc(u_\rho(2s, b)|\cdot|^{\half{a}} \rtimes \sigma)$
is a subrepresentation of $u_\rho(a,b)|\cdot|^s \rtimes \pi$, 
then we must have 
\[
\left(D^{(k_{2s,b})}_{\rho|\cdot|^{A-s+1}} \circ \dots \circ D^{(k_{2s,1})}_{\rho|\cdot|^{B-s+1}}\right)
\circ \dots \circ
\left(D^{(k_{1,b})}_{\rho|\cdot|^{A+s}} \circ \dots \circ D^{(k_{1,1})}_{\rho|\cdot|^{B+s}}\right)(\sigma) 
\not= 0.
\]
This shows that (a) implies (b).
\par

If we set $\pi' = \soc(u_\rho(2s, b)|\cdot|^{\half{a}} \rtimes \sigma)$, 
then 
\[
\pi' \hookrightarrow u_\rho(2s, b)|\cdot|^{\half{a}} \times u_\rho(a-2s,b) \rtimes \pi. 
\]
Note that $u_\rho(a,b)|\cdot|^s$ is a unique irreducible subrepresentation 
of $u_\rho(2s, b)|\cdot|^{\half{a}} \times u_\rho(a-2s,b)$, 
which is characterized among its composition series
by
\[
\left(L_{\rho|\cdot|^{A-s+1}} \circ \dots \circ L_{\rho|\cdot|^{B-s+1}}\right)
\circ \dots \circ
\left(L_{\rho|\cdot|^{A+s}} \circ \dots \circ L_{\rho|\cdot|^{B+s}}\right)
\left(u_\rho(a,b)|\cdot|^s\right) \not= 0,
\]
where $L_{\rho|\cdot|^x}$ is the left $\rho|\cdot|^x$-derivative, 
which is an analogue of $D_{\rho|\cdot|^x}$ for general linear groups
(cf., see \S \ref{der.gl} below).
Now if we assume (b), 
by considering the exponents of 
$D^{\max}_{\rho|\cdot|^{B-s+1}, \dots, \rho|\cdot|^{A-s+1}}
\circ \dots \circ
D^{\max}_{\rho|\cdot|^{B+s}, \dots, \rho|\cdot|^{A+s}}(\pi')$
(cf., see Example \ref{ex.35}), 
we see that the inclusion 
$\pi' \hookrightarrow u_\rho(2s, b)|\cdot|^{\half{a}} \times u_\rho(a-2s,b) \rtimes \pi$
factors through $\pi' \hookrightarrow u_\rho(a,b)|\cdot|^s \rtimes \pi$.
Hence we obtain (a). 
This completes the proof.
\end{proof}

%\section{Unitary inductions}
%\section{Unitary inductions}
\section{Unitary inductions}\label{s.unitary}
Let
\begin{itemize}
\item
$\rho \in \Cusp^\bot(\GL_d(F))$; 
\item
$\pi \in \Irr(G_n)$ be of Arthur type; and
\item
$a$ and $b$ be positive integers. 
\end{itemize}
In the previous section, 
we reduce the study of $\soc(u_\rho(a,b)|\cdot|^s \rtimes \pi)$ for $s \in \R$
to the case where $s = 0$. 
In this section, we treat this case. 
To do this, we recall terminologies of $A$-parameters and $A$-packets. 

%\subsection{$A$-parameters}
\subsection{$A$-parameters}\label{Apar}
Denote by $\widehat{G}_n$ the complex dual group of $G_n$. 
Namely, $\widehat{G}_n = \Sp_{2n}(\C)$ if $G_n = \SO_{2n+1}(F)$, and 
$\widehat{G}_n = \SO_{2n+1}(\C)$ if $G_n = \Sp_{2n}(F)$.  
Recall that an \emph{$A$-parameter for $G_n$} is 
the $\widehat{G}_n$-conjugacy class of an admissible homomorphism
\[
\psi \colon W_F \times \SL_2(\C) \times \SL_2(\C) \rightarrow \widehat{G}_n
\]
such that the image of the Weil group $W_F$ is bounded.
By composing with the standard representation of $\widehat{G}_n$, 
we can regard $\psi$ as a representation of $W_F \times \SL_2(\C) \times \SL_2(\C)$. 
It decomposes as 
\[
\psi = \bigoplus_\rho\left(\bigoplus_{i \in I_\rho} \rho \boxtimes S_{a_i} \boxtimes S_{b_i}\right), 
\]
where 
\begin{itemize}
\item
$\rho$ runs over $\cup_{d \geq 1} \Cusp_\unit(\GL_d(F))$,
which is identified with an irreducible bounded representation of $W_F$ 
by the local Langlands correspondence for the general linear groups; 
\item
$S_a$ is the unique irreducible algebraic representation of $\SL_2(\C)$ of dimension $a$.
\end{itemize}
Notice that $a_i$ and $b_i$ depend on $\rho$, but we do not write it.
We write $\rho \boxtimes S_a = \rho \boxtimes S_a \boxtimes S_1$ 
and $\rho = \rho \boxtimes S_1 \boxtimes S_1$ for short. 
\par

Let $\psi$ be as above. 
We say that 
$\psi$ is \emph{of good parity} 
if $\rho \boxtimes S_{a_i} \boxtimes S_{b_i}$ is self-dual of the same type as $\psi$ 
for any $\rho$ and $i \in I_\rho$, 
i.e., 
\begin{itemize}
\item
$\rho \in \Cusp^\bot(\GL_d(F))$ is orthogonal and $a_i+b_i \equiv 0 \bmod 2$ 
if $G_n = \Sp_{2n}(F)$
(\resp $a_i+b_i \equiv 1 \bmod 2$ if $G_n = \SO_{2n+1}(F)$); or 
\item
$\rho \in \Cusp^\bot(\GL_d(F))$ is symplectic and $a_i+b_i \equiv 1 \bmod 2$ 
if $G_n = \Sp_{2n}(F)$
(\resp $a_i+b_i \equiv 0 \bmod 2$ if $G_n = \SO_{2n+1}(F)$).
\end{itemize}
Let $\Psi(G_n) \supset \Psi_\gp(G_n)$ be the sets of equivalence classes of $A$-parameters
and $A$-parameters of good parity, respectively. 
Also, we set $\Phi_\temp(G_n)$ to be the subset of $\Psi(G_n)$ consisting of
\emph{tempered} $A$-parameters, i.e., 
$A$-parameters $\phi$ which are trivial on the second $\SL_2(\C)$. 
Finally, we set $\Phi_\gp(G_n) = \Psi_\gp(G_n) \cap \Phi_\temp(G_n)$.
\par

For 
$\psi = \oplus_\rho(\oplus_{i \in I_\rho} \rho \boxtimes S_{a_i} \boxtimes S_{b_i}) \in \Psi_\gp(G_n)$, 
define the \emph{enhanced component group} by
\[
\AA_\psi = \bigoplus_{\rho} \bigoplus_{i \in I_\rho} (\Z/2\Z) \alpha_{\rho, i}, 
\]
i.e., $\AA_\psi$ is a $(\Z/2\Z)$-vector space 
with a canonical basis $\alpha_{\rho, i}$ corresponding to 
$\rho \boxtimes S_{a_i} \boxtimes S_{b_i}$.  
The \emph{component group} $\Sc_\psi$ is 
the quotient of $\AA_\psi$ by the subgroup generated by 
\begin{itemize}
\item
$\alpha_{\rho, i} + \alpha_{\rho, j}$ 
such that $\rho \boxtimes S_{a_i} \boxtimes S_{b_i} = \rho \boxtimes S_{a_j} \boxtimes S_{b_j}$; 
and
\item
$z_\psi = \sum_{\rho}\sum_{i \in I_\rho} \alpha_{\rho,i}$, 
which is called the \emph{central element} of $\AA_\psi$.
\end{itemize}
Let $\widehat{\Sc_\psi} \subset \widehat{\AA_\psi}$ 
be the Pontryagin duals of $\Sc_\psi$ and $\AA_\psi$, respectively. 
When $\ep \in \widehat{\AA_\psi}$, 
we write $\ep(\rho \boxtimes S_{a_i} \boxtimes S_{b_i}) \coloneqq \ep(\alpha_{\rho, i}) \in \{\pm1\}$.

%\subsection{$A$-packets}\label{Apacket}
\subsection{$A$-packets}\label{Apacket}
Let $\Irr_\unit(G_n)$ (\resp $\Irr_\temp(G_n)$) be the set of
equivalence classes of irreducible unitary (\resp tempered) representations of $G_n$.
To an $A$-parameter $\psi \in \Psi(G_n)$, 
Arthur \cite[Theorem 1.5.1 (a)]{Ar} associated an $A$-packet $\Pi_\psi$, 
which is a finite multi-set over $\Irr_\unit(G_n)$.
We say that $\pi \in \Irr(G_n)$ is \emph{of Arthur type} if $\pi \in \Pi_\psi$ for some $\psi \in \Psi(G_n)$. 
In particular, such $\pi$ is unitary. 
\par

M{\oe}glin \cite{Moe11c} showed that $\Pi_\psi$ is multiplicity-free, 
i.e., a subset of $\Irr_\unit(G_n)$. 
By \cite[Theorem 1.5.1 (b)]{Ar}, if $\phi \in \Phi_\temp(G_n)$ is a tempered $A$-parameter, 
then $\Pi_\phi$ is a subset of $\Irr_\temp(G_n)$ and
\[
\Irr_\temp(G_n) = \bigsqcup_{\phi \in \Phi_\temp(G_n)} \Pi_\phi
\quad \text{(disjoint union)}.
\]
However, $\Pi_{\psi_1} \cap \Pi_{\psi_2} \not= \emptyset$ 
even if $\psi_1 \not\cong \psi_2$ in general.
\par

If $\psi = \oplus_\rho (\oplus_{i \in I_\rho} \rho \boxtimes S_{a_i} \boxtimes S_{b_i})$, 
set 
\[
\tau_{\psi} = 
\bigtimes_\rho\bigtimes_{i \in I_\rho} u_{\rho}(a_i,b_i)
=
\bigtimes_\rho\bigtimes_{i \in I_\rho} 
\begin{pmatrix}
\half{a_i-b_i} & \ldots & \half{a_i+b_i}-1 \\
\vdots & \ddots  & \vdots \\
-\half{a_i+b_i}+1 & \ldots & -\half{a_i-b_i} 
\end{pmatrix}_{\rho}
\]
to be a product of (unitary) Speh representations, 
which is an irreducible unitary representation of $\GL_m(F)$ with $m = \dim(\psi)$.
\par

\begin{prop}[{\cite[Theorem 6]{Moe06a}, \cite[Proposition 8.11]{X2}}]\label{bad}
Any $\psi \in \Psi(G_n)$ can be decomposed as 
\[
\psi = \psi_1 \oplus \psi_0 \oplus \psi_1^\vee, 
\]
where 
\begin{itemize}
\item
$\psi_0 \in \Psi_\gp(G_{n_0})$; 
\item
$\psi_1$ is a direct sum of irreducible representations of $W_F \times \SL_2(\C) \times \SL_2(\C)$
which are not self-dual of the same type as $\psi$. 
\end{itemize}
For $\pi_0 \in \Pi_{\psi_0}$, 
the parabolically induced representation $\tau_{\psi_1} \rtimes \pi_0$
is irreducible and independent of the choice of $\psi_1$. 
Moreover, 
\[
\Pi_\psi = \left\{ \tau_{\psi_1} \rtimes \pi_0 \;\middle|\; \pi_0 \in \Pi_{\psi_0} \right\}.
\]
\end{prop}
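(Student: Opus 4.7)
The plan is to construct the decomposition by sorting irreducible summands of $\psi$, then establish irreducibility via standard intertwining-operator analysis, and finally identify the A-packet via Arthur's endoscopic construction.

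First, I would decompose $\psi = \bigoplus_\rho \bigoplus_{i \in I_\rho} \rho \boxtimes S_{a_i} \boxtimes S_{b_i}$. Call a summand \emph{good} if it is self-dual of the same type as $\psi$, and \emph{bad} otherwise. Since $\psi$ is self-dual of its prescribed type, each non-self-dual bad summand appears paired with its dual of equal multiplicity, while each self-dual bad summand (of the wrong type) appears with even multiplicity; the latter is a parity argument on the invariant bilinear form preserved by $\psi$. Collecting the bad summands into $\psi_1 \oplus \psi_1^\vee$ and the good summands into $\psi_0 \in \Psi_\gp(G_{n_0})$ yields the desired decomposition. The only ambiguity in choosing $\psi_1$ is how to split self-dual bad summands between $\psi_1$ and $\psi_1^\vee$, but since such a summand $\rho \boxtimes S_a \boxtimes S_b$ gives rise to a self-dual Speh $u_\rho(a,b) \cong u_\rho(a,b)^\vee$, the factor $\tau_{\psi_1}$ is unchanged up to isomorphism. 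Hence independence of the choice of $\psi_1$ follows.

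For the irreducibility of $\tau_{\psi_1} \rtimes \pi_0$, I would write $\tau_{\psi_1}$ as a product of Speh representations $u_\rho(a_i, b_i)$ each corresponding to a non-good-parity summand, and reduce inductively to the one-factor case $u_\rho(a,b) \rtimes \pi_0$. The point is that for such non-good-parity data, the standard intertwining operator on $u_\rho(a,b)|\cdot|^s \rtimes \pi_0$ is holomorphic and nonvanishing at $s=0$: reducibility points are constrained by the good-parity component $\psi_0$ of $\pi_0$'s A-parameter, so non-good-parity inducing data avoid them. This is a Shahidi-style analysis combined with the known structure of reducibility points for Arthur-type representations.

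Finally, for the equality $\Pi_\psi = \{\tau_{\psi_1} \rtimes \pi_0 \mid \pi_0 \in \Pi_{\psi_0}\}$: the inclusion $\supseteq$ follows because parabolic induction commutes with endoscopic transfer of stable distributions, so each $\tau_{\psi_1} \rtimes \pi_0$ satisfies the stable character identities that define $\Pi_\psi$. The reverse inclusion rests on two ingredients: the canonical isomorphism $\Sc_\psi \cong \Sc_{\psi_0}$ of component groups (the basis vectors $\alpha_{\rho,i}$ for bad summands become trivial in $\Sc_\psi$, either by the symplectic-type relations or by cancelling with their duals), and Moeglin's multiplicity-one theorem, which together with the character identities force a bijection $\Pi_\psi \leftrightarrow \Pi_{\psi_0}$ realized by $\pi_0 \mapsto \tau_{\psi_1} \rtimes \pi_0$. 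The main obstacle is this last step: comparing $|\Pi_\psi|$ with $|\Pi_{\psi_0}|$ and showing the induction map is surjective, which is genuinely the substantive content of Mœglin's and Xu's work on the structure of A-packets and cannot be avoided by formal manipulations alone.
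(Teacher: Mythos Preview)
The paper does not supply its own proof of this proposition: it is stated as a result of M{\oe}glin, with explicit references to \cite[Theorem 6]{Moe06a} and \cite[Proposition 8.11]{X2}, and no argument is given in the text. So there is no ``paper's proof'' to compare against.

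Your sketch is a reasonable outline of how the cited results are established, and you are honest about where the real content lies. A few remarks. The decomposition $\psi = \psi_1 \oplus \psi_0 \oplus \psi_1^\vee$ and the independence of $\tau_{\psi_1}$ up to isomorphism are indeed formal. For irreducibility of $u_\rho(a,b) \rtimes \pi_0$ in the bad-parity case, your appeal to intertwining operators is more analytic than the combinatorial approach in \cite{X2}, which traces through M{\oe}glin's explicit construction of $A$-packets; either route works, but the combinatorial one is what the cited references actually do. Your final paragraph is exactly right: the identification $\Pi_\psi = \{\tau_{\psi_1} \rtimes \pi_0\}$ and the bijection with $\Pi_{\psi_0}$ are the substantive content, and they rest on M{\oe}glin's multiplicity-one theorem together with the explicit parametrization of $A$-packets---this cannot be short-circuited, and you correctly flag it as the point where one must invoke the cited work rather than argue from first principles.
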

\par

Through this paper, we implicitly fix a Whittaker datum for $G_n$. 
Let $\psi \in \Psi_\gp(G_n)$ so that we have defined the component group $\Sc_\psi$. 
Arthur \cite[Theorem 1.5.1 (a)]{Ar} gave a map
\[
\Pi_\psi \rightarrow \widehat{\Sc_\psi},\; \pi \mapsto \pair{\cdot, \pi}_\psi. 
\]
If $\psi = \phi \in \Phi_\gp(G_n)$ is tempered, this map is bijective. 
When $\pi \in \Pi_\phi$ corresponds to $\ep \in \widehat{\Sc_\phi}$, we write $\pi = \pi(\phi, \ep)$.

\begin{prop}\label{lem223}
Let $\psi \in \Psi_{\gp}(G_n)$. 
Suppose that $\psi = \psi_0 \oplus (\rho \boxtimes S_a \boxtimes S_b)^{\oplus 2}$. 
Then for $\ep_0 \in \widehat{\Sc_{\psi_0}}$, we have
\[
\bigoplus_{\substack{\pi_0 \in \Pi_{\psi_0} \\ \pair{\cdot, \pi_0}_{\psi_0} = \ep_0}} 
u_{\rho}(a,b) \rtimes \pi_0 
= \bigoplus_{\substack{\pi \in \Pi_\psi \\ \pair{\cdot, \pi}_\psi|_{S_{\psi_0}} = \ep_0}}
\pi.
\]
In particular, $u_{\rho}(a,b) \rtimes \pi_0$ is multiplicity-free. 
\end{prop}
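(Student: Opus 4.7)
The plan is to derive this statement from Arthur's compatibility of $A$-packets with parabolic induction from a Levi subgroup (essentially \cite[Proposition 2.4.3]{Ar}) together with a direct analysis of $\Sc_\psi$. As a preliminary step, I would compute the component group. Let $\alpha_1, \alpha_2 \in \AA_\psi$ denote the basis vectors corresponding to the two copies of $\rho \boxtimes S_a \boxtimes S_b$. The defining relation that equivalent summands are identified gives $\alpha_1 + \alpha_2 = 0$ in $\Sc_\psi$, whereupon the central-element relation $z_\psi = 0$ reduces to the central-element relation $z_{\psi_0} = 0$ for $\psi_0$. A short computation then yields a canonical splitting $\Sc_\psi \cong \Sc_{\psi_0} \oplus (\Z/2\Z)\bar\alpha$, where $\bar\alpha$ is the common image of $\alpha_1$ and $\alpha_2$. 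Consequently, every $\ep_0 \in \widehat{\Sc_{\psi_0}}$ extends to exactly two characters $\ep^\pm \in \widehat{\Sc_\psi}$, differing only by $\ep^\pm(\bar\alpha) = \pm 1$.

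Next, I would apply Arthur's parabolic induction theorem at the Levi subgroup $L = \GL_{dab}(F) \times G_{n_0}$ with associated $A$-parameter $\psi_L$ whose $\GL$-factor is $\rho \boxtimes S_a \boxtimes S_b$ and whose $G_{n_0}$-factor is $\psi_0$. For each $\pi_0 \in \Pi_{\psi_0}$ with $\pair{\cdot, \pi_0}_{\psi_0} = \ep_0$, this yields
\[
u_\rho(a,b) \rtimes \pi_0
\;\cong\; \bigoplus_{\substack{\ep \in \widehat{\Sc_\psi} \\ \ep|_{\Sc_{\psi_0}} = \ep_0}} \pi(\psi, \ep),
\]
with each summand irreducible and appearing exactly once. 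Taking the direct sum of these identities over all such $\pi_0$ produces the left-hand side of the proposition. On the right-hand side, M{\oe}glin's multiplicity-freeness of $\Pi_\psi$ (cited in Section~\ref{Apacket}) guarantees that each $\pi \in \Pi_\psi$ with $\pair{\cdot, \pi}_\psi|_{\Sc_{\psi_0}} = \ep_0$ contributes once; the counts on both sides then match via the two-to-one character restriction $\widehat{\Sc_\psi} \twoheadrightarrow \widehat{\Sc_{\psi_0}}$ established above.

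Finally, the multiplicity-freeness of each individual $u_\rho(a,b) \rtimes \pi_0$ follows at once, since it is a direct summand of the multiplicity-free right-hand side. The main obstacle is the invocation of Arthur's Proposition 2.4.3, whose justification rests on the stable trace formula and endoscopic character identities and thus lies outside what one can establish by purely local methods; the component-group computation and the passage from individual to summed identities are then essentially bookkeeping once that deep result is taken as input.
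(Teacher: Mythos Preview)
Your overall approach---invoking Arthur's compatibility of $A$-packets with parabolic induction \cite[Proposition 2.4.3]{Ar}---is exactly what the paper does; indeed, the paper's proof consists of the single sentence ``See (the proof of) \cite[Proposition 2.4.3]{Ar}.'' However, the details you supply to flesh out this citation contain two genuine errors.

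First, the splitting $\Sc_\psi \cong \Sc_{\psi_0} \oplus (\Z/2\Z)\bar\alpha$ is false in general: if $\rho \boxtimes S_a \boxtimes S_b$ already occurs in $\psi_0$, then $\bar\alpha$ is identified with the basis element of $\AA_{\psi_0}$ attached to that occurrence, and the natural map $\Sc_{\psi_0} \to \Sc_\psi$ is an isomorphism rather than an index-two inclusion. In that case each $\ep_0$ extends uniquely, not in two ways.

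Second, and more seriously, the displayed formula you write for an \emph{individual} $\pi_0$,
\[
u_\rho(a,b) \rtimes \pi_0 \;\cong\; \bigoplus_{\ep|_{\Sc_{\psi_0}} = \ep_0} \pi(\psi, \ep),
\]
is stronger than what Arthur's result provides and is not correct as stated. For non-tempered $\psi$ the map $\Pi_\psi \to \widehat{\Sc_\psi}$ is neither injective nor surjective in general, so the symbol $\pi(\psi,\ep)$ is undefined (the paper reserves the notation $\pi(\phi,\ep)$ for tempered $\phi$). Even if one reinterprets the right-hand side as the sum of all $\pi \in \Pi_\psi$ with $\pair{\cdot,\pi}_\psi = \ep$, the identity would force every $\pi_0$ with character $\ep_0$ to induce to the \emph{same} representation; since $\Pi_{\psi_0} \to \widehat{\Sc_{\psi_0}}$ is also not injective in general, summing your individual identities over all such $\pi_0$ would overcount the right-hand side of the proposition and contradict M{\oe}glin's multiplicity-one theorem. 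What Arthur's character identity (after Fourier inversion over $\Sc_{\psi_0}$) actually yields is precisely the \emph{summed} identity asserted in the proposition, not the refined individual decomposition.
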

\begin{proof}
See (the proof of) \cite[Proposition 2.4.3]{Ar}. 
\end{proof}

%\subsection{Extended multi-segments}
\subsection{Extended multi-segments}\label{s.EMS}
To describe $A$-packets, in \cite{At}, we introduced the following notion.

\begin{defi}\label{segments}
\begin{enumerate}
\item
An \emph{extended segment} is a triple $([A,B]_\rho, l, \eta)$,
where
\begin{itemize}
\item
$[A,B]_\rho = \{\rho|\cdot|^A, \dots, \rho|\cdot|^B \}$ is a segment; 
\item
$l \in \Z$ with $0 \leq l \leq \half{b}$, where $b \coloneqq \#[A,B]_\rho = A-B+1$; 
\item
$\eta \in \{\pm1\}$. 
\end{itemize}

\item
Two extended segments $([A,B]_\rho, l, \eta)$ and $([A',B']_{\rho'}, l', \eta')$ are \emph{equivalent} 
if 
\begin{itemize}
\item
$[A,B]_\rho = [A',B']_{\rho'}$; 
\item
$l = l'$; and
\item
$\eta = \eta'$ whenever $l = l' < \half{b}$. 
\end{itemize}
Similarly, two multi-sets of extended segments 
$\{ ([A_i,B_i]_{\rho_i}, l_i, \eta_i) \}_{i \in I}$ 
and $\{ ([A'_i,B'_i]_{\rho}, l'_i, \eta'_i) \}_{i \in I}$ 
with the same index set $I$ are \emph{equivalent}
if $([A_i,B_i]_\rho, l_i, \eta_i)$ and $([A'_i,B'_i]_{\rho}, l'_i, \eta'_i)$ are equivalent for all $i \in I$.

\item
An \emph{extended multi-segment} for $G_n$ is an equivalence class of multi-sets of extended segments 
\[
\EE = \bigcup_{\rho}\{ ([A_i,B_i]_{\rho}, l_i, \eta_i) \}_{i \in (I_\rho,>)}
\]
such that 
\begin{itemize}
\item
$\rho$ runs over $\cup_{d \geq 1}\Cusp^\bot(\GL_d(F))$;
\item
$I_\rho$ is a totally ordered finite set with a fixed order $>$ which is called admissible;

\item
$A_i + B_i \geq 0$ for all $\rho$ and $i \in I_\rho$; 

\item
as a representation of $W_F \times \SL_2(\C) \times \SL_2(\C)$, 
\[
\psi_\EE \coloneqq 
\bigoplus_\rho \bigoplus_{i \in I_\rho} \rho \boxtimes S_{a_i} \boxtimes S_{b_i}
\]
belongs to $\Psi_\gp(G_n)$, 
where $a_i \coloneqq A_i+B_i+1$ and $b_i \coloneqq A_i-B_i+1$; 

\item
a sign condition
\[
\prod_{\rho} \prod_{i \in I_\rho} (-1)^{[\half{b_i}]+l_i} \eta_i^{b_i} = 1
\]
holds. 
\end{itemize}

\end{enumerate}
\end{defi}

In \cite{At}, to an extended multi-segment $\EE$ for $G_n$, 
we associate a representation $\pi(\EE)$ of $G_n$. 
To describe $\pi(\EE)$ explicitly, 
it was important to consider several orders on $I_\rho$. 
Nevertheless, in this paper, we assume that the order $>$ on $I_\rho$ always satisfies that 
\[
B_i < B_j \implies i < j.
\]
\par

We review the definition of $\pi(\EE)$. 
Let $\EE = \cup_{\rho}\{ ([A_i,B_i]_{\rho}, l_i, \eta_i) \}_{i \in (I_\rho,>)}$.
We say that 
\begin{itemize}
\item
$\EE$ has a \emph{discrete diagonal restriction (DDR)} 
if, for any $\rho$ and $i,j \in I_\rho$ with $i \not= j$, 
the segments $[A_i,B_i]_\rho$ and $[A_j,B_j]_\rho$ have no intersection; 

\item
$\EE$ is \emph{non-negative} if $B_i \geq 0$ for any $\rho$ and $i \in I_\rho$.  
\end{itemize}
When $\EE$ is non-negative DDR, we define 
\begin{align*}
\pi(\EE) = \soc \left(
\bigtimes_\rho \bigtimes_{i \in I_\rho}
\begin{pmatrix}
B_i & \ldots & B_i + l_i -1\\
\vdots & \ddots & \vdots \\
-A_i  & \ldots & -(A_i-l_i+1)
\end{pmatrix}_\rho
\rtimes
\pi(\phi, \ep)
\right)
\end{align*}
with 
\[
\phi = \bigoplus_\rho \bigoplus_{i \in I_\rho} 
\rho \boxtimes \left( S_{2(B_i+l_i)+1} \oplus \dots \oplus S_{2(A_i-l_i)+1} \right)
\]
and $\ep(\rho \boxtimes S_{2(B_i+l_i+k)+1}) = (-1)^k \eta_i$ for $0 \leq k \leq b_i - 2l_i - 1$.
In general, 
take a sequence of non-negative integers $\cup_\rho\{t_i\}_{i \in (I_\rho,>)}$ such that 
$\EE' = \cup_\rho \{([A_i+t_i,B_i+t_i]_{\rho}, l_i, \eta_i)\}_{i \in (I_\rho,>)}$ is non-negative DDR, 
and define 
\[
\pi(\EE) = 
\circ_\rho \circ_{i \in I_\rho}
\left(
D_{\rho|\cdot|^{B_i+1}, \dots, \rho|\cdot|^{A_i+1}}
\circ \dots \circ 
D_{\rho|\cdot|^{B_i+t_i}, \dots, \rho|\cdot|^{A_i+t_i}}
\right)
(\pi(\EE')).
\]
This definition does not depend on the choice of $\cup_\rho\{t_i\}_{i \in (I_\rho,>)}$.
\par

Note that $\pi(\EE)$ is irreducible or zero by Theorems \ref{der} and \ref{der2}. 
The following properties were proven in \cite[Theorems 1.2, 1.3, 1.4, 3.5]{At}: 
\begin{itemize}
\item
There exists a non-vanishing criterion for $\pi(\EE)$. 

\item
For $\psi = \oplus_{\rho}(\oplus_{i \in I_\rho} \rho \boxtimes S_{a_i} \boxtimes S_{b_i}) 
\in \Psi_\gp(G_n)$
with a fixed order $>$ on $I_\rho$, we have 
\[
\Pi_\psi = \{\pi(\EE) \;|\; \psi_\EE \cong \psi\} \setminus \{0\}. 
\]

\item
The character $\pair{\cdot, \pi(\EE)}_{\psi_\EE}$ is explicitly determined by $\EE$. 
\end{itemize}
\par

%\subsection{Decompositions of unitary inductions}
\subsection{Decompositions of unitary inductions}
Now we describe the unitary induction $u_\rho(a,b) \rtimes \pi$ 
for $\pi$ of Arthur type, i.e., $\pi \in \Pi_\psi$ for some $\psi \in \Psi(G_n)$. 
We decompose 
$\psi = \psi_1 \oplus \psi_0 \oplus \psi_1^\vee$
as in Proposition \ref{bad}. 
According to this proposition, $\pi = \tau_{\psi_1} \rtimes \pi_0$ for some $\pi_0 \in \Pi_{\psi_0}$. 
Since $u_\rho(a,b)$ and $\tau_{\psi_1}$ are both unitary,  
we have $u_\rho(a,b) \times \tau_{\psi_1} \cong \tau_{\psi_1} \times u_\rho(a,b)$. 
Hence the problem is reduced to the case where $\psi = \psi_0 \in \Psi_\gp(G_n)$.
In this case, one can write $\pi = \pi(\EE)$ for some extended multi-segment $\EE$ for $G_n$. 
\par

When $\psi_\EE \oplus (\rho \boxtimes S_a \boxtimes S_b)^{\oplus2}$ is not of good parity, 
by Proposition \ref{bad}, $u_\rho(a,b) \rtimes \pi(\EE)$ is irreducible. 
Otherwise, we have the following: 

\begin{thm}\label{s=0}
Suppose that $\psi_\EE \oplus (\rho \boxtimes S_a \boxtimes S_b)^{\oplus2}$ is of good parity.
For $(l,\eta) \in \Z \times \{\pm1\}$ with $0 \leq l \leq b/2$, 
define $\EE_{(l,\eta)}$ by adding $([A,B]_{\rho},l,\eta)$ and $([A,B]_\rho,l,(-1)^{A-B}\eta)$ to $\EE$, 
where $A = \half{a+b}-1$ and $B = \half{a-b}$,
such that if we let $i_0$ (\resp $i'_0$) be the index 
for $([A,B]_{\rho},l,\eta)$ (\resp $([A,B]_\rho,l,(-1)^{A-B}\eta)$), 
then $i_0 <i'_0$ are adjacent, and $j > i'_0$ if and only if $B_j > B$. 
Then
\[
u_\rho(a,b) \rtimes \pi(\EE) \cong \bigoplus_{(l,\eta)}\pi(\EE_{(l,\eta)}), 
\]
where $(l,\eta)$ runs over the set $\{(l,\eta) \in \Z \times \{\pm1\} \;|\; 0 \leq l \leq b/2\}/\sim$. 
Here, we write $(l,\eta) \sim (l',\eta')$ if $l=l'$ and if $\eta=\eta'$ whenever $l=l' < b/2$. 
\end{thm}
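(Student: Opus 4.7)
The plan is to derive Theorem \ref{s=0} from Proposition \ref{lem223} via the parametrization of $A$-packets by extended multi-segments from Section \ref{s.EMS}, isolating which summands correspond to the specific base $\pi(\EE)$. Set $\psi \coloneqq \psi_\EE \oplus (\rho \boxtimes S_a \boxtimes S_b)^{\oplus 2} \in \Psi_\gp(G_{n+ab})$ and $\ep_0 \coloneqq \pair{\cdot, \pi(\EE)}_{\psi_\EE}$. Proposition \ref{lem223} then yields
\[
\bigoplus_{\substack{\pi_0 \in \Pi_{\psi_\EE} \\ \pair{\cdot, \pi_0}_{\psi_\EE} = \ep_0}}
u_\rho(a,b) \rtimes \pi_0
\cong
\bigoplus_{\substack{\pi \in \Pi_\psi \\ \pair{\cdot, \pi}_\psi|_{\mathcal{S}_{\psi_\EE}} = \ep_0}} \pi,
\]
so the theorem reduces to (i) identifying the summand $u_\rho(a,b) \rtimes \pi(\EE)$ inside this total sum, and (ii) matching it with a precise subset of the $\pi(\EE')$ on the right.

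For the matching, any $\pi \in \Pi_\psi$ takes the form $\pi(\EE')$ for some extended multi-segment $\EE'$ with $\psi_{\EE'} \cong \psi$ by \cite{At}, and such an $\EE'$ arises by inserting two extended segments $([A,B]_\rho, l_1, \eta_1)$ and $([A,B]_\rho, l_2, \eta_2)$ into some base $\EE''$ (with $\psi_{\EE''} \cong \psi_\EE$) at adjacent positions $i_0 < i'_0$ within the block of indices $j$ with $B_j = B$. Two facts then need to be established. Firstly, matching characters: unwinding the explicit formula from \cite{At}, the condition $\pair{\cdot, \pi(\EE')}_\psi|_{\mathcal{S}_{\psi_\EE}} = \ep_0$ amounts to a cancellation between the contributions of $i_0$ and $i'_0$ on every generator of $\AA_{\psi_\EE}$, and a direct sign computation forces $l_1 = l_2 = l$ and $\eta_2 = (-1)^{A-B}\eta_1$. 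Secondly, to select those $\EE'$ whose base is the given $\EE$ (rather than some other $\EE'' \in \Pi_{\psi_\EE}$ sharing the character $\ep_0$), I would apply iterated derivatives with respect to the new segments, packaged via Definition \ref{def.der}; the adjacency of $i_0, i'_0$ within the $B_j = B$ block and the order convention $B_i < B_j \Rightarrow i < j$ ensure that these derivatives peel off exactly the two new segments and return $\pi(\EE)$.

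Finally, non-vanishing of each $\pi(\EE_{(l,\eta)})$ follows from the criterion in \cite{At}: the paired signs $(\eta, (-1)^{A-B}\eta)$ on two adjacent extended segments with identical $[A,B]_\rho$ and $l$ satisfy the required sign-alternation inequalities automatically, independent of the remaining data of $\EE$. A direct count gives $b+1$ equivalence classes of $(l, \eta)$, which should agree with the number of summands predicted by the identification above. I expect the main obstacle to be the second fact in the previous paragraph, namely tracking $\pi(\EE_{(l,\eta)})$ back to the specific base $\pi(\EE)$ rather than to another $\pi(\EE'')$ with the same restricted character. This demands careful derivative/Jacquet-module bookkeeping built on the precise adjacency and placement conventions in the definition of $\EE_{(l,\eta)}$, combined with the construction of $\pi(\EE)$ from its non-negative DDR shift recalled in Section \ref{s.EMS}.
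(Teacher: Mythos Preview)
Your overall strategy---use Proposition \ref{lem223} and then separate the summands---is in the right direction, but there are two genuine problems, and the paper's route is different in a way that sidesteps both.

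First, your non-vanishing claim is simply false. It is not true that every $\pi(\EE_{(l,\eta)})$ is nonzero; see Corollaries \ref{len} and the one following it, as well as the examples in Section \ref{s.irred}. The paired signs $(\eta,(-1)^{A-B}\eta)$ on two adjacent identical segments do not automatically satisfy the non-vanishing criterion of \cite{At} ``independent of the remaining data of $\EE$''; indeed if $\EE$ already contains an extended segment with underlying $[A,B]_\rho$, all but one $(l,\eta)$ give zero. Consequently your count of $b+1$ nonzero summands is wrong (the length is at most $\min\{a,b\}+1$), and no counting argument of this type can close the proof.

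Second, your claim that the restriction condition $\pair{\cdot,\pi(\EE')}_\psi|_{\Sc_{\psi_\EE}}=\ep_0$ by itself forces $l_1=l_2$ and $\eta_2=(-1)^{A-B}\eta_1$ is not justified. That restriction constrains the base $\EE''$ together with its interaction with the two inserted segments via the formula of \cite[Theorem 3.5]{At}; it does not isolate the shape of the pair $(l_1,\eta_1),(l_2,\eta_2)$ until you already know $\EE''=\EE$, which is precisely the separation you flag as the main obstacle. So the two ``facts'' you list are circular as stated.

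The paper avoids both issues by a different reduction. Rather than fixing $\ep_0$, it sums over \emph{all} $\EE$ with $\psi_\EE\cong\psi$, so that both sides equal $\bigoplus_{\pi'\in\Pi_{\psi'}}\pi'$. Given this global equality, it suffices to prove a \emph{single} inclusion $\pi(\EE_{(l,\eta)})\hookrightarrow u_\rho(a,b)\rtimes\pi(\EE)$ for every $\EE$; the opposite inclusion and the separation of different bases then come for free. That inclusion is built in three steps: tempered $\psi_\EE$ (where the character map is injective and your Proposition \ref{lem223} argument does work directly), then non-negative DDR (reduced to tempered via the socle description of $\pi(\EE)$ and commutation of $u_\rho(a,b)$ with the relevant Speh blocks), then the general case (by shifting to DDR and applying derivatives). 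The derivative step you envision is present, but only to transport the inclusion from the shifted $\EE'$ back to $\EE$, not to distinguish bases with the same character.

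If you want to salvage your approach, drop the non-vanishing claim entirely (zero terms are harmless in the asserted direct sum), and replace the character-forcing argument by constructing the inclusion $\pi(\EE_{(l,\eta)})\hookrightarrow u_\rho(a,b)\rtimes\pi(\EE)$ directly, as the paper does; then the total-sum equality finishes the proof without any need to separate bases sharing a character.
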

\begin{proof}
This theorem seems to be already known by M{\oe}glin
(watch the video of her talk \cite{M-video}). 
For the sake of completeness, 
we give a proof. 
\par

Fix $\psi \in \Psi_\gp(G_n)$ and set $\psi' = \psi \oplus (\rho \boxtimes S_a \boxtimes S_b)^{\oplus2}$. 
Since 
\[
\bigoplus_{\substack{\EE \\ \psi_\EE \cong \psi}}u_\rho(a,b) \rtimes \pi(\EE)
\cong \bigoplus_{\pi' \in \Pi_{\psi'}}\pi' 
\cong \bigoplus_{\substack{\EE \\ \psi_\EE \cong \psi}}\bigoplus_{(l,\eta)}\pi(\EE_{(l,\eta)}), 
\]
it is enough to show that for any fixed $\psi$, 
one of two inclusions 
$u_\rho(a,b) \rtimes \pi(\EE) \subset \oplus_{(l,\eta)}\pi(\EE_{(l,\eta)})$
or 
$\oplus_{(l,\eta)}\pi(\EE_{(l,\eta)}) \subset u_\rho(a,b) \rtimes \pi(\EE)$
holds whenever $\psi_\EE \cong \psi$. 
We will prove this by considering several steps. 
Write $\EE = \cup_{\rho'}\{ ([A_i,B_i]_{\rho'}, l_i, \eta_i) \}_{i \in (I_{\rho'},>)}$.
We may assume that $\pi(\EE_{(l,\eta)}) \not= 0$.

\begin{enumerate}
\item
We assume that $\psi = \psi_\EE$ is a tempered $A$-parameter, i.e., 
$A_i = B_i$ for all $\rho'$ and $i \in I_{\rho'}$. 
In this case, since the map 
$\Pi_\psi \rightarrow \widehat{\Sc_\psi}$, $\pi \mapsto \pair{\cdot, \pi}_\psi$ 
is injective, 
to prove $\pi(\EE_{(l,\eta)}) \subset u_\rho(a,b) \rtimes \pi(\EE)$,
by Proposition \ref{lem223}, it suffices to check that 
$\pair{\cdot, \pi(\EE_{(l,\eta)})}_{\psi'} |_{\Sc_\psi} = \pair{\cdot, \pi(\EE)}_\psi$. 
It follows from \cite[Theorem 3.5]{At}. 

\item
Define $\EE'_{(l,\eta)}$ by adding 
$([A,B]_{\rho},l,\eta)$ and $([A+b,B+b]_\rho,l,(-1)^{A-B}\eta)$ to $\EE$ 
as adjacent elements similar to $\EE_{(l,\eta)}$. 
We assume that $\EE'_{(l,\eta)}$ is non-negative DDR. 
In this case, 
\[
\pi(\EE_{(l,\eta)}) = \soc\left(
\tau
\rtimes
\pi(\FF_{(l,\eta)})
\right),
\]
where $\FF_{(l,\eta)}$ is defined from $\EE_{(l,\eta)}$ 
by replacing $([A_i,B_i]_{\rho'},l_i,\eta_i)$ with $([A_i-l_i,B_i+l_i]_{\rho'},0,\eta_i)$
for all $\rho'$ and $i \in I_{\rho'}$, 
and we set 
\[
\tau = \bigtimes_{\rho'} \bigtimes_{i \in I_{\rho'}}
\begin{pmatrix}
B_i & \ldots & B_i + l_i -1\\
\vdots & \ddots & \vdots \\
-A_i  & \ldots & -(A_i-l_i+1)
\end{pmatrix}_{\rho'}.
\]
Moreover, by replacing 
$\{([A_i-l_i,B_i+l_i]_{\rho'},0,\eta_i)\}$ with 
\[
\bigcup_{k=0}^{A_i-B_i-2l_i} \{ ([B_i+l_i+k,B_i+l_i+k]_{\rho'}, 0, (-1)^{k}\eta_i) \}
\]
and vice versa, 
one can apply the first case to $\pi(\FF_{(l,\eta)})$. 
Hence $\oplus_{(l,\eta)}\pi(\FF_{(l,\eta)}) \cong u_\rho(a,b) \rtimes \pi(\FF)$
with $\FF = \cup_{\rho'}\{ ([A_i-l_i,B_i+l_i]_{\rho'}, 0, \eta_i) \}_{i \in (I_{\rho'},>)}$.
Since $[B_i+l_i-1, -A_i]_{\rho'}$ and $[A,-A]_\rho$ are always not linked, 
by \cite[Theorem 1.1]{T-irr}, we have $\tau \times u_\rho(a,b) \cong u_\rho(a,b) \times \tau$. 
Hence we have
\begin{align*}
\bigoplus_{(l,\eta)}\pi(\EE_{(l,\eta)}) 
&\cong
\soc\left(
\tau \rtimes \bigoplus_{(l,\eta)}\pi(\FF_{(l,\eta)})
\right)
\\&\cong 
\soc\left(
\tau \times u_\rho(a,b) \rtimes \pi(\FF)
\right)
\\&\cong 
\soc\left(
u_\rho(a,b) \times \tau \rtimes \pi(\FF)
\right).
\end{align*}
Since $\pi(\EE) = \soc(\tau \rtimes \pi(\FF))$, we have 
\[
u_\rho(a,b) \rtimes \pi(\EE) \hookrightarrow 
u_\rho(a,b) \times \tau \rtimes \pi(\FF).
\] 
Since the left hand side is semisimple, 
we see that $u_\rho(a,b) \rtimes \pi(\EE) \hookrightarrow \oplus_{(l,\eta)}\pi(\EE_{(l,\eta)})$, 
as desired. 

\item
We consider the general case. 
Take $\EE_{(l,\eta)}' = \cup_{\rho'}\{ ([A_i+t_i,B_i+t_i]_{\rho'}, l_i, \eta_i) \}_{i \in (I'_{\rho'},>)}$
such that it is non-negative DDR and 
\[
\pi(\EE_{(l,\eta)}) = 
\circ_{\rho'} \circ_{i \in I'_{\rho'}}
\left(
D_{\rho'|\cdot|^{B_i+1}, \dots, \rho'|\cdot|^{A_i+1}}
\circ \dots \circ 
D_{\rho'|\cdot|^{B_i+t_i}, \dots, \rho'|\cdot|^{A_i+t_i}}
\right)
(\pi(\EE_{(l,\eta)}')).
\]
By construction, we have $I'_{\rho'} = I_{\rho'}$ unless $\rho' \cong \rho$, 
and in this case, $I'_\rho = I_\rho \cup \{i_0,i'_0\}$ such that 
$i_0 < i_0'$ are adjacent and that 
\begin{align*}
([A_{i_0}+t_{i_0},B_{i_0}+t_{i_0}]_{\rho}, l_{i_0}, \eta_{i_0}) &= ([A+t,B+t]_\rho,l,\eta), \\
([A_{i'_0}+t_{i'_0},B_{i'_0}+t_{i'_0}]_{\rho}, l_{i'_0}, \eta_{i'_0}) &= ([A+t',B+t']_\rho,l,(-1)^{A-B}\eta)
\end{align*}
for some $t < t'$.
By the same argument as \cite[Corollary 5.3]{At}, 
we may reset $t' = t$ by replacing 
\begin{align*}
\left(
D_{\rho|\cdot|^{B+1}, \dots, \rho|\cdot|^{A+1}}
\circ \dots \circ 
D_{\rho|\cdot|^{B+t'}, \dots, \rho|\cdot|^{A+t'}}
\right)
\circ
\left(
D_{\rho|\cdot|^{B+1}, \dots, \rho|\cdot|^{A+1}}
\circ \dots \circ 
D_{\rho|\cdot|^{B+t}, \dots, \rho|\cdot|^{A+t}}
\right)
\end{align*}
with 
\begin{align*}
\left(
D^{\max}_{\rho|\cdot|^{B+1}, \dots, \rho|\cdot|^{A+1}}
\circ \dots \circ 
D^{\max}_{\rho|\cdot|^{B+t}, \dots, \rho|\cdot|^{A+t}}
\right).
\end{align*}
Then we can use the second case so that 
$\pi(\EE_{(l,\eta)}') \hookrightarrow u_\rho(a+2t,b) \rtimes \pi(\EE')$, 
where $\EE' = \cup_{\rho'}\{ ([A_i+t_i,B_i+t_i]_{\rho'}, l_i, \eta_i) \}_{i \in (I_{\rho'},>)}$.
By computing derivatives, 
we conclude that $\pi(\EE_{(l,\eta)}) \hookrightarrow u_\rho(a,b) \rtimes \pi(\EE)$. 
\end{enumerate}
This completes the proof. 
\end{proof}

Combining Propositions \ref{s>>0}, \ref{middle}, \ref{image} and Theorem \ref{s=0}, 
we obtain Theorem \ref{main}.

\begin{cor}\label{len}
The length of $u_\rho(a,b) \rtimes \pi(\EE)$ is at most $\min\{a,b\}+1$.
\end{cor}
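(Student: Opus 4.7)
The starting point is Theorem~\ref{s=0}, which gives
\[
u_\rho(a,b) \rtimes \pi(\EE) \cong \bigoplus_{(l,\eta)} \pi(\EE_{(l,\eta)}),
\]
where $(l,\eta)$ ranges over equivalence classes of pairs in $\Z \times \{\pm 1\}$ with $0 \leq l \leq b/2$. A direct count shows that there are exactly $b+1$ such equivalence classes: for each integer $0 \leq l < b/2$ there are two classes (one per choice of $\eta$), and, when $b$ is even, there is one further class at $l = b/2$. Hence the length of $u_\rho(a,b) \rtimes \pi(\EE)$ is at most $b+1$, which already settles the assertion when $a \geq b$.

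To handle the case $a < b$ and obtain the sharper bound $a+1$, I would pass to the Aubert--Zelevinsky dual $\widehat{(\cdot)}$, which is a length-preserving involution on the Grothendieck group of finite-length smooth representations of $G_n$ with the following properties: it intertwines parabolic induction in the sense that $\widehat{\sigma \rtimes \pi} \cong \widehat{\sigma} \rtimes \widehat{\pi}$; it sends the Speh representation $u_\rho(a,b)$ to $u_\rho(b,a)$; and it sends a representation of Arthur type with parameter $\bigoplus_i \rho_i \boxtimes S_{a_i} \boxtimes S_{b_i}$ to a representation of Arthur type with parameter $\bigoplus_i \rho_i \boxtimes S_{b_i} \boxtimes S_{a_i}$, obtained by interchanging the two $\SL_2(\C)$-factors.

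Granting these properties, we obtain in the Grothendieck group
\[
\widehat{u_\rho(a,b) \rtimes \pi(\EE)} \cong u_\rho(b,a) \rtimes \widehat{\pi(\EE)},
\]
and the right-hand side is again an induction of a Speh representation by a representation of Arthur type, so Theorem~\ref{s=0} applies with $a$ and $b$ swapped. This yields length at most $a+1$ on the right, and hence on the left as well, since $\widehat{(\cdot)}$ preserves length. Combining with the initial bound $b+1$, we conclude that the length of $u_\rho(a,b) \rtimes \pi(\EE)$ is at most $\min\{a,b\}+1$. The main obstacle is that Aubert--Zelevinsky duality is not developed in this paper, so one must invoke external references for its compatibility with parabolic induction, its effect on Speh representations, and the fact that it interchanges the two $\SL_2(\C)$-factors of an $A$-parameter; the last point can be made explicit through the combinatorial duality on extended multi-segments underlying $\pi(\EE)$.
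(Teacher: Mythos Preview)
Your argument is correct, but it takes a different route from the paper. The paper's own proof never leaves the framework of extended multi-segments: it invokes the non-vanishing criterion of \cite[Theorem 1.3]{At}, which says that $\pi(\EE_{(l,\eta)}) \neq 0$ forces $B+l \geq 0$, or $B+l \geq -1/2$ with $\eta = +1$ (here $B = (a-b)/2$). A short case-by-case count of the pairs $(l,\eta)$ satisfying this constraint then yields at most $\min\{a,b\}+1$ surviving summands directly, with no need to invoke duality.

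Your approach instead obtains the easy bound $b+1$ from the raw index set and then uses Aubert--Zelevinsky duality to exchange $a$ and $b$. This is perfectly valid, and arguably more conceptual: it explains the symmetry in $a$ and $b$ of the bound without any case analysis. The cost, as you note, is that Aubert duality and its compatibilities (with parabolic induction, with Speh representations, and with $A$-parameters via the swap of the two $\SL_2(\C)$ factors) are not developed in the paper and must be imported from the literature. The paper's route stays entirely within tools already set up, at the price of a small case-by-case computation; yours trades that computation for an appeal to a structural involution.
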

\begin{proof}
By \cite[Theorem 1.3]{At}, if $\pi(\EE_{(l,\eta)}) \not= 0$
then $B+l \geq 0$, or $B+l = -1/2$ and $\eta \in \{\pm1\}$ is uniquely determined.
By a case-by-case calculation, 
we see that there are at most $\min\{a,b\}+1$ such pairs $(l,\eta)$. 
\end{proof}

\begin{cor}
Suppose that $\pi(\EE) \not= 0$. 
If $\EE$ contains $([A,B]_\rho,l_0,\eta_0)$ for some $(l_0,\eta_0)$, 
where $A = (a+b)/2-1$ and $B = (a-b)/2$, 
then $u_\rho(a,b) \rtimes \pi(\EE)$ is irreducible.
\end{cor}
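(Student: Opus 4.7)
My plan is to combine Theorem \ref{s=0} with the non-vanishing criterion of \cite[Theorem 1.3]{At} to show that the direct sum decomposition of $u_\rho(a,b) \rtimes \pi(\EE)$ has a unique nonzero term. Since $\EE$ contains $([A,B]_\rho, l_0, \eta_0)$, the summand $\rho \boxtimes S_a \boxtimes S_b$ already appears in $\psi_\EE$, so $\psi_\EE \oplus (\rho \boxtimes S_a \boxtimes S_b)^{\oplus 2}$ is of good parity and Theorem \ref{s=0} yields
\[
u_\rho(a,b) \rtimes \pi(\EE) \cong \bigoplus_{(l,\eta)} \pi(\EE_{(l,\eta)}).
\]
Irreducibility thus reduces to showing that exactly one equivalence class $(l,\eta)/\sim$ makes $\pi(\EE_{(l,\eta)}) \neq 0$.

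Next I describe the extended segments of $\EE_{(l,\eta)}$ sharing the range $[A,B]_\rho$. Suppose $\EE$ contains segments of range $[A,B]_\rho$ at indices $i^*_1 < \cdots < i^*_k$ (with $k \geq 1$ by hypothesis). The construction in Theorem \ref{s=0} places the two new segments at adjacent indices $i_0 < i'_0$ characterized by $j > i'_0 \Leftrightarrow B_j > B$; since $B_{i^*_j} = B$ and $i_0, i'_0$ are adjacent with none of the $i^*_j$ equal to them, we deduce $i^*_j < i_0$ for all $j$. Hence $\EE_{(l,\eta)}$ contains $k+2$ equivalent extended segments arranged in a single admissible block with parameter sequence
\[
(l_{i^*_1}, \eta_{i^*_1}),\ \ldots,\ (l_{i^*_k}, \eta_{i^*_k}),\ (l, \eta),\ (l, (-1)^{A-B}\eta).
\]

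Finally, I invoke the non-vanishing criterion of \cite[Theorem 1.3]{At} applied to this block. The criterion combines a monotonicity condition on the $l$-sequence, a sign compatibility condition on the $\eta$-sequence when consecutive $l$'s coincide, and the boundary inequality $B + l_* \geq 0$ (or $B + l_* \geq -1/2$ with $\eta_* = +1$) already invoked in the proof of Corollary \ref{len}. Taken together, and combined with the built-in relation $\eta_{i'_0} = (-1)^{A-B}\eta_{i_0}$ that automatically saturates the constraint between the two new segments, these conditions cut out at most one equivalence class of $(l,\eta)$. Since at least one class must survive (because $u_\rho(a,b) \rtimes \pi(\EE) \neq 0$), the induction is irreducible. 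The main obstacle is carrying out this last step by a careful case analysis according to whether the prescribed $l$-sequence is strictly increasing at the last step (in which case no sign restriction arises from the pair $(i^*_k, i_0)$) or has a repetition (in which case the sign relation propagates back to a unique admissible $\eta$), with special attention to the boundary case $l = b/2$ where the equivalence $\sim$ absorbs the $\eta$-choice entirely.
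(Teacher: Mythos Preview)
Your overall strategy matches the paper's: apply Theorem~\ref{s=0} to decompose $u_\rho(a,b)\rtimes\pi(\EE)$ and then use results from \cite{At} to show that at most one summand $\pi(\EE_{(l,\eta)})$ is nonzero. The difference is in the last step. The paper invokes \cite[Proposition~4.1]{At} directly: that proposition says that whenever an extended multi-segment $\EE'$ with $\pi(\EE')\neq 0$ contains two extended segments with the same range $[A,B]_\rho$, their $(l,\eta)$-data are rigidly linked. Since $\EE_{(l,\eta)}$ contains both the original $([A,B]_\rho,l_0,\eta_0)$ and the inserted $([A,B]_\rho,l,\eta)$, non-vanishing forces $l=l_0$ and determines $\eta$ uniquely --- a one-line argument. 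You instead propose to recover the same constraint from the general non-vanishing criterion \cite[Theorem~1.3]{At} by a case analysis on the block of $[A,B]_\rho$-segments; this is workable but more laborious, and in your write-up the analysis is only announced, not carried out.

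One small caution about your setup: the $k+2$ segments of range $[A,B]_\rho$ need not form a \emph{contiguous} block in the admissible order, since $\EE$ may contain segments with $B_j=B$ but $A_j\neq A$ interleaved among them (the order only enforces $B_i<B_j\Rightarrow i<j$). This does not ultimately matter once you cite \cite[Proposition~4.1]{At}, which compares any two equal-range segments regardless of adjacency, but it would complicate a direct block-by-block application of \cite[Theorem~1.3]{At} along the lines you sketch.
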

\begin{proof}
If $\pi(\EE_{(l,\eta)}) \not= 0$, 
by \cite[Proposition 4.1]{At}, 
we see that $l=l_0$, and that $\eta$ is determined uniquely. 
\end{proof}

As in the following example, Corollary \ref{len} is optimum. 
\begin{ex}
Suppose that $\psi = (\rho \boxtimes S_a \boxtimes S_b)^{\oplus 2} \in \Psi_\gp(\SO_{2abd+1}(F))$. 
Then 
\begin{align*}
\bigoplus_{\pi \in \Pi_\psi}\pi 
&\cong u_\rho(a,b) \rtimes \1_{\SO_1(F)}
\\&\cong \bigoplus_{(l,\eta)}\pi( \{ ([A,B]_\rho, l, \eta), ([A,B]_\rho,l, (-1)^{A-B}\eta) \} ). 
\end{align*}
By \cite[Theorems 1.3, 1.4]{At}, $\pi( \{ ([A,B]_\rho, l, \eta), ([A,B]_\rho,l, (-1)^{A-B}\eta) \} ) \not= 0$
if and only if $B+l \geq 0$, or $B+l = -1/2$ and $\eta = +1$.
By a case-by-case calculation, 
we conclude that the length of $u_\rho(a,b) \rtimes \1_{\SO_1(F)}$ is equal to $\min\{a,b\}+1$.
\end{ex}

%\section{Irreducibility and examples}
%\section{Irreducibility and examples}
\section{Irreducibility and examples}\label{s.irred}
In this section, we discuss when $u_\rho(a,b)|\cdot|^s \rtimes \pi$ is irreducible. 
Also, we give some examples.

%\subsection{Irreducibility}
\subsection{Irreducibility}
We give some consequences of the results in the previous sections.

\begin{cor}\label{free}
Let $\pi \in \Irr(G_n)$ be of Arthur type. 
Then for any $s \in \R$, 
any irreducible subrepresentation of $u_\rho(a,b)|\cdot|^s \rtimes \pi$
appears in the semisimplification $[u_\rho(a,b)|\cdot|^s \rtimes \pi]$ with multiplicity one. 
In particular, the socle $\soc(u_\rho(a,b)|\cdot|^s \rtimes \pi)$ is multiplicity-free. 
\end{cor}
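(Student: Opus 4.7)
The plan is to split into the three cases of Theorem \ref{main}, since the multiplicity-one statement has already been tracked through each of them. The second assertion (multiplicity-freeness of the socle) is a formal consequence: if an irreducible representation $\pi'$ occurred in $\soc(u_\rho(a,b)|\cdot|^s \rtimes \pi)$ with multiplicity $k$, then it would occur in the full semisimplification with multiplicity at least $k$, so multiplicity one in $[u_\rho(a,b)|\cdot|^s \rtimes \pi]$ forces $k=1$.

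First I would handle the ``easy'' regime where $s > (a-1)/2$, or $s < -(b-1)/2$, or $s \not\in (1/2)\Z$: the last sentence of Proposition \ref{s>>0} already asserts that the unique irreducible subrepresentation appears in $[u_\rho(a,b)|\cdot|^s \rtimes \pi]$ with multiplicity one.

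Next, in the middle range, i.e., $s \in (1/2)\Z$ with $0 < s \leq (a-1)/2$ or $-(b-1)/2 \leq s < 0$, I would invoke Proposition \ref{middle}, whose final assertion says that every irreducible subrepresentation $\pi'$ of $u_\rho(a,b)|\cdot|^s \rtimes \pi$ occurs in $[u_\rho(a,b)|\cdot|^s \rtimes \pi]$ with multiplicity one. So nothing new needs to be proven in this case either.

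Finally, for $s=0$, reduce as in the beginning of Section~\ref{s.unitary}: write $\pi = \tau_{\psi_1} \rtimes \pi_0$ with $\pi_0 \in \Pi_{\psi_0}$ for some $\psi_0 \in \Psi_\gp(G_{n_0})$, and further reduce to $\pi_0 = \pi(\EE)$. If $\psi_\EE \oplus (\rho \boxtimes S_a \boxtimes S_b)^{\oplus 2}$ is not of good parity, Proposition \ref{bad} already gives irreducibility. Otherwise, Theorem \ref{s=0} gives the explicit decomposition $u_\rho(a,b) \rtimes \pi(\EE) \cong \bigoplus_{(l,\eta)} \pi(\EE_{(l,\eta)})$, where the indexing set is in bijection with a subset of $\Pi_{\psi'}$ via $\EE_{(l,\eta)} \mapsto \pi(\EE_{(l,\eta)})$; M\oe glin's multiplicity-one theorem for $A$-packets (cited in Section~\ref{Apacket}) then ensures that distinct nonzero $\pi(\EE_{(l,\eta)})$ are pairwise non-isomorphic. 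Hence the decomposition is multiplicity-free, which is what we need. The only subtlety here is keeping track of the fact that $\tau_{\psi_1}$ commutes with $u_\rho(a,b)$ in the induction, so that multiplicity-freeness after inducing from $\pi_0$ transfers to multiplicity-freeness after inducing from $\pi = \tau_{\psi_1} \rtimes \pi_0$; this is the main, but mild, bookkeeping obstacle.
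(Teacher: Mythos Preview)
Your proposal is correct and follows essentially the same case split as the paper's proof, which simply reads ``It follows from Propositions \ref{s>>0}, \ref{middle} and \ref{lem223}.'' The only difference is in the $s=0$ case: you route through Theorem \ref{s=0} together with M{\oe}glin's multiplicity-one theorem to conclude that the summands $\pi(\EE_{(l,\eta)})$ are pairwise non-isomorphic, whereas the paper cites Proposition \ref{lem223} directly, whose last sentence already states that $u_\rho(a,b) \rtimes \pi_0$ is multiplicity-free (and which is logically prior to, and used in the proof of, Theorem \ref{s=0}). Your detour is valid but unnecessary; citing \ref{lem223} is shorter and avoids the bookkeeping about $\tau_{\psi_1}$ you mention, since the reduction to good parity and the multiplicity-freeness are both already packaged there (together with Proposition \ref{bad}).
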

\begin{proof}
It follows from Propositions \ref{s>>0}, \ref{middle} and \ref{lem223}. 
\end{proof}

This corollary gives a criterion for the irreducibility. 
\begin{cor}\label{irred}
Let $\pi \in \Irr(G_n)$ be of Arthur type. 
Then $u_\rho(a,b)|\cdot|^s \rtimes \pi$ is irreducible if and only if all of the following conditions hold: 
\begin{itemize}
\item
$\soc(u_\rho(a,b)|\cdot|^s \rtimes \pi)$ is irreducible; 
\item
$\soc(u_\rho(a,b)|\cdot|^{-s} \rtimes \pi)$ is irreducible; 
\item
$\soc(u_\rho(a,b)|\cdot|^s \rtimes \pi) \cong \soc(u_\rho(a,b)|\cdot|^{-s} \rtimes \pi)$.
\end{itemize}
\end{cor}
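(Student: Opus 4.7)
Write $\Pi_s := u_\rho(a,b)|\cdot|^s \rtimes \pi$. The plan is to combine Corollary \ref{free} with the duality $\Pi_s^\vee \cong \Pi_{-s}$. I would first establish this duality from two self-duality inputs: since $\rho$ is self-dual, $u_\rho(a,b)^\vee \cong u_\rho(a,b)$; and since every irreducible smooth representation of $G_n = \SO_{2n+1}(F)$ or $\Sp_{2n}(F)$ is self-dual (via the MVW involution), $\pi^\vee \cong \pi$. Taking contragredient of the parabolic induction then gives $\Pi_s^\vee \cong u_\rho(a,b)|\cdot|^{-s} \rtimes \pi = \Pi_{-s}$, so irreducible subrepresentations of $\Pi_{-s}$ correspond via contragredient to irreducible quotients of $\Pi_s$.

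For the ``only if'' direction, suppose $\Pi_s$ is irreducible. Then $\soc(\Pi_s) = \Pi_s$ is trivially irreducible; by the duality, $\Pi_{-s} \cong \Pi_s^\vee$ is irreducible as well; and using self-duality of irreducibles of $G_n$, $\Pi_s \cong \Pi_s^\vee \cong \Pi_{-s}$, so in particular $\soc(\Pi_s) \cong \soc(\Pi_{-s})$. Hence all three conditions hold.

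For the ``if'' direction, assume all three conditions. Translating condition 2 through the duality shows that the maximal semisimple quotient of $\Pi_s$ is an irreducible representation isomorphic to $\soc(\Pi_{-s})^\vee$, which by self-duality of irreducibles equals $\soc(\Pi_{-s})$ and, by condition 3, equals $\pi^\sharp := \soc(\Pi_s)$. I would then invoke the following short lemma: if $V$ has finite length with $[V]$ multiplicity-free, and $V$ has both an irreducible subrepresentation and an irreducible quotient isomorphic to a common $\pi^\sharp$, then $V \cong \pi^\sharp$. To prove the lemma, consider the composition $\pi^\sharp \hookrightarrow V \twoheadrightarrow \pi^\sharp$; by Schur's lemma it is either an isomorphism, in which case $V$ splits as $\pi^\sharp \oplus K$ and the irreducibility of $\soc(V)$ forces $K=0$; or it is zero, in which case the embedded $\pi^\sharp$ lies inside the kernel of the surjection, so $\pi^\sharp$ appears at least twice in $[V]$, contradicting the multiplicity-freeness supplied by Corollary \ref{free}. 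Applying this to $V = \Pi_s$ yields $\Pi_s \cong \pi^\sharp$, so $\Pi_s$ is irreducible. The argument is essentially formal once Corollary \ref{free} and the duality $\Pi_s^\vee \cong \Pi_{-s}$ are in hand; there is no serious technical obstacle, only the mild prerequisite of self-duality for irreducibles of $G_n$.
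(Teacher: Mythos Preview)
Your proof is correct and follows essentially the same approach as the paper's. Both arguments rest on the duality $\Pi_s^\vee \cong \Pi_{-s}$ (so that the cosocle of $\Pi_s$ is governed by $\soc(\Pi_{-s})$) together with the multiplicity-one input from Corollary~\ref{free}; the paper simply phrases the ``if'' direction as a direct contradiction (if $\Pi_s$ were reducible then $\soc(\Pi_{-s})$ would appear both as $\soc(\Pi_s)$ and as a constituent of $\Pi_s/\soc(\Pi_s)$), whereas you package the same contradiction inside a short lemma and handle the split case explicitly. One small wording point: your lemma hypothesis ``$[V]$ multiplicity-free'' is stronger than what Corollary~\ref{free} actually supplies (it only asserts multiplicity one for irreducible \emph{subrepresentations}), but since your proof of the lemma only uses that $\pi^\sharp$ itself has multiplicity one, the application goes through unchanged.
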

\begin{proof}
The only if part is trivial. 
To prove the if part, we assume the three conditions.
If $u_\rho(a,b)|\cdot|^s \rtimes \pi$ were to be reducible, 
since $\soc(u_\rho(a,b)|\cdot|^{-s} \rtimes \pi)$
is a unique irreducible quotient of $u_\rho(a,b)|\cdot|^s \rtimes \pi$, 
we would have 
\[
\frac{u_\rho(a,b)|\cdot|^s \rtimes \pi}
{\soc(u_\rho(a,b)|\cdot|^s \rtimes \pi)}
\twoheadrightarrow \soc(u_\rho(a,b)|\cdot|^{-s} \rtimes \pi).
\]
This contradicts that 
$\soc(u_\rho(a,b)|\cdot|^s \rtimes \pi) \cong \soc(u_\rho(a,b)|\cdot|^{-s} \rtimes \pi)$
appears in $[u_\rho(a,b)|\cdot|^s \rtimes \pi]$ with multiplicity one (Corollary \ref{free}).
\end{proof}

The following sufficient condition for the irreducibility is useful.
\begin{thm}\label{irred-bad}
Let $\psi \in \Psi_\gp(G_n)$ and $\pi \in \Pi_\psi$. 
Suppose one of the following:
\begin{itemize}
\item
$s \not\in (1/2)\Z$; 
\item
$s \in (1/2)\Z \setminus \Z$ and 
$\psi \oplus (\rho \boxtimes S_a \boxtimes S_b)^{\oplus2}$ is of good parity; 
\item
$s \in \Z$ and $\psi \oplus (\rho \boxtimes S_a \boxtimes S_b)^{\oplus2}$ is not of good parity. 
\end{itemize}
Then $u_\rho(a,b)|\cdot|^s \rtimes \pi$ is irreducible. 
\end{thm}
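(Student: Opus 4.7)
The plan is to apply Corollary \ref{irred}: it suffices to show that $\soc(\Pi_s)$ and $\soc(\Pi_{-s})$ are both irreducible and mutually isomorphic, where $\Pi_{\pm s} = u_\rho(a,b)|\cdot|^{\pm s} \rtimes \pi$. The unifying ingredient is a parity observation valid in all three cases: every exponent $x$ in the $\rho$-cuspidal support of $u_\rho(a,b)|\cdot|^{\pm s}$ and every $y$ in the $\rho$-cuspidal support of $\pi$ satisfy $x - y \notin \Z$. Indeed $x \in \pm s + \half{a-b} + \Z$ and $y \in \half{a'-b'} + \Z$ for some $\rho \boxtimes S_{a'} \boxtimes S_{b'}$ in $\psi$, with $\half{a'-b'} \bmod \Z$ pinned down by the good parity of $\psi$. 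Hypothesis (i) makes $s$ itself non-half-integral; in (ii) the half-integer $s$ shifts $\half{a-b}$ out of the coset shared with $\half{a'-b'}$; in (iii) the cosets of $\half{a-b}$ and $\half{a'-b'}$ already differ (opposite parities), and an integer shift preserves this.

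Using this observation, I first show that both $\soc(\Pi_s)$ and $\soc(\Pi_{-s})$ are irreducible. Case (i) is Proposition \ref{s>>0} directly. In cases (ii) and (iii), Proposition \ref{s>>0} still applies when $s > (a-1)/2$, and for $0 < s \leq (a-1)/2$ I invoke Proposition \ref{middle}: any irreducible subrepresentation of $\Pi_s$ has the form $\soc(u_\rho(2s,b)|\cdot|^{a/2} \rtimes \sigma)$ for an irreducible summand $\sigma$ of $u_\rho(a-2s,b) \rtimes \pi$. The crucial point is that $\rho \boxtimes S_{a-2s} \boxtimes S_b$ is not of good parity --- in (ii) because $2s$ is an odd integer flipping the parity of $(a-2s)+b$, and in (iii) because the parity was already wrong and $2s$ is even. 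Proposition \ref{bad} thus makes $u_\rho(a-2s,b) \rtimes \pi$ irreducible, so $\sigma$ is uniquely determined, and Proposition \ref{s>>0} (applicable since $a/2 > (2s-1)/2$) yields irreducibility of $\soc(u_\rho(2s,b)|\cdot|^{a/2} \rtimes \sigma)$. Negative $s$ is handled symmetrically via the second half of Proposition \ref{middle}.

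The main obstacle is showing $\soc(\Pi_s) \cong \soc(\Pi_{-s})$. My approach is to exploit the parity observation to pin down the Langlands data of both socles explicitly. Since every $\rho|\cdot|^x$ in the cuspidal support of $u_\rho(a,b)|\cdot|^{\pm s}$ is absent from the cuspidal support of $\pi$, the highest derivatives $D^{\max}_{\rho|\cdot|^x}$ act trivially on $\pi$, and the derivative characterizations of the socles (as in the proofs of Propositions \ref{s>>0} and \ref{middle} and Example \ref{ex.35}) pin down $\soc(\Pi_{\pm s})$ as the unique irreducible representation whose Langlands data consist of those of $\pi$ together with the segments of $u_\rho(a,b)|\cdot|^{\pm s}$, after dualizing any segment of the latter with midpoint $\geq 0$ (to fit the ``$x+y < 0$'' convention of the classical-group Langlands classification). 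A direct computation then shows that the dualized $j$-th segment $\Delta_\rho[B+s+j-1, -A+s+j-1]^\vee$ of $u_\rho(a,b)|\cdot|^s$ equals the $(b+1-j)$-th segment of $u_\rho(a,b)|\cdot|^{-s}$, so the two sets of Langlands segments agree up to reordering. Hence the two socles are isomorphic, and Corollary \ref{irred} completes the proof.
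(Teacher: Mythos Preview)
Your parity observation is correct and is a clean way to unify the three hypotheses; the paper does not state it in this form. Your argument that both socles are irreducible is also fine (and is in fact a bit more explicit than the paper's, which just cites Propositions \ref{s>>0} and \ref{middle} without spelling out why $u_\rho(a-2s,b)\rtimes\pi$ is irreducible).

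The gap is in the final step. You assert that the derivative characterization ``pins down $\soc(\Pi_{\pm s})$ as the unique irreducible representation whose Langlands data consist of those of $\pi$ together with the segments of $u_\rho(a,b)|\cdot|^{\pm s}$, after dualizing any segment with midpoint $\geq 0$.'' The derivative characterization from Propositions \ref{s>>0}--\ref{middle} does determine $\soc(\Pi_{\pm s})$ uniquely, but extracting the Langlands data from it requires the explicit inversion formulas of \cite{AM}, and the step where a segment with positive midpoint gets replaced by its dual is exactly the step where the tempered part $\pi_0=\pi(\phi,\ep)$ intervenes. Concretely, passing from $\soc(\rho|\cdot|^{x}\rtimes\pi_0)$ to something with $\rho|\cdot|^{-x}$ in its Langlands data presupposes $\rho|\cdot|^{x}\rtimes\pi_0\cong\rho|\cdot|^{-x}\rtimes\pi_0$, which is precisely the irreducibility statement the paper imports from \cite{MW} in its base case $b=1$. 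Your parity observation does guarantee that $\rho\boxtimes S_{2x+1}$ (and more generally $\rho\boxtimes S_a$ when a midpoint-$0$ segment occurs) is \emph{not} of good parity, which is what makes that irreducibility hold --- but you never invoke this, and without it the asserted Langlands data is unjustified. Relatedly, a segment with midpoint exactly $0$ cannot sit among the $\Delta$'s of the Langlands data at all; it must be absorbed into the tempered part, and one must check that the resulting tempered induction is irreducible (it is, again because $\rho\boxtimes S_a$ has the wrong parity), which you do not address.

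For comparison, the paper does not compute Langlands data at all. It proves $\soc(\Pi_s)\cong\soc(\Pi_{-s})$ by induction on $b$: the base case $b=1$ uses Zelevinsky's criterion to commute $u_\rho(a,1)|\cdot|^{\pm s}$ past the $\Delta_{\rho_i}[x_i,y_i]$ (your parity observation makes this immediate) together with \cite{MW} for the tempered piece; the inductive step embeds $u_\rho(a,b)|\cdot|^s$ into $\Delta_\rho[B+s,-A+s]\times u_\rho(a,b-1)|\cdot|^{s+1/2}$ and handles one delicate reducible case by a derivative argument. Your approach would be more direct if completed, but as written it is missing the ingredient that does the real work at the tempered level.
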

\begin{proof}
The case where $s = 0$ is Proposition \ref{bad}. 
Since the irreducibility of $u_\rho(a,b)|\cdot|^s \rtimes \pi$ is equivalent to 
the one of $u_\rho(a,b)|\cdot|^{-s} \rtimes \pi$, 
we may assume that $s > 0$. 
Note that by Propositions \ref{s>>0} and \ref{middle}, 
$\soc(u_\rho(a,b)|\cdot|^s \rtimes \pi)$ and $\soc(u_\rho(a,b)|\cdot|^{-s} \rtimes \pi)$
are both irreducible. 
Hence by Corollary \ref{irred}, 
it is enough to show that 
$\soc(u_\rho(a,b)|\cdot|^s \rtimes \pi) \cong \soc(u_\rho(a,b)|\cdot|^{-s} \rtimes \pi)$. 
We prove this claim by several steps. 

\begin{enumerate}
\item
We assume that $s \in (1/2)\Z$ and $b = 1$. 
Write $\pi = L(\Delta_{\rho_1}[x_1,y_1], \dots, \Delta_{\rho_r}[x_r,y_r]; \pi(\phi, \ep))$.
Since $\psi \in \Psi_\gp(G_n)$, we have $\phi \in \Phi_\gp(G_{n_0})$. 
By \cite[Theorem 9.7]{Z} and \cite[Th{\'e}or{\`e}me (i)]{MW} together with our assumption, 
we see that
\begin{itemize}
\item
$u_\rho(a,1)|\cdot|^{s} \times \Delta_{\rho_i}[x_i,y_i] 
\cong \Delta_{\rho_i}[x_i,y_i] \times u_\rho(a,1)|\cdot|^{s}$; 
\item
$u_\rho(a,1)|\cdot|^{-s} \times \Delta_{\rho_i}[x_i,y_i] 
\cong \Delta_{\rho_i}[x_i,y_i] \times u_\rho(a,1)|\cdot|^{-s}$; 
\item
$u_\rho(a,1)|\cdot|^{s} \rtimes \pi(\phi,\ep) \cong u_\rho(a,1)|\cdot|^{-s} \rtimes \pi(\phi,\ep)$. 
\end{itemize}
These isomorphisms and the characterization of 
$\soc(u_\rho(a,b)|\cdot|^{\pm s} \rtimes \pi)$ 
obtained in the proofs of Propositions \ref{s>>0} and \ref{middle}, 
we see that 
\[
\soc(u_\rho(a,b)|\cdot|^s \rtimes \pi) \cong \soc(u_\rho(a,b)|\cdot|^{-s} \rtimes \pi),
\] 
as desired.

\item
We assume that $s \in (1/2)\Z$ and $b \geq 2$.
We prove the claim by induction on $b$. 
Set $\pi' = \soc(u_\rho(a,b)|\cdot|^{s} \rtimes \pi)$. 
Note that 
$u_\rho(a,b)|\cdot|^s \hookrightarrow \Delta_\rho[B+s, -A+s] \times u_\rho(a,b-1)|\cdot|^{s+1/2}$
with $A = (a+b)/2-1$ and $B = (a-b)/2$.
By the induction hypothesis, we have
$\pi' \hookrightarrow \Delta_\rho[B+s, -A+s] \times u_\rho(a,b-1)|\cdot|^{-s-\half{1}} \rtimes \pi$. 
We set 
\begin{align*}
\tau &= \Delta_\rho[B+s, -A+s] \times u_\rho(a,b-1)|\cdot|^{-s-\half{1}}
\\&= \begin{pmatrix}
B+s \\ \vdots \\ -A+s
\end{pmatrix}_\rho
\times 
\begin{pmatrix}
B-s & \ldots & A-s-1 \\
\vdots & \ddots & \vdots \\
-A-s & \ldots & -B-s-1
\end{pmatrix}_\rho.
\end{align*}
If $\tau$ is irreducible, then 
$\pi' \hookrightarrow u_\rho(a,b-1)|\cdot|^{-s-1/2} \times \Delta_\rho[B+s,-A+s] \rtimes \pi$. 
In this case, by the previous case, we have 
\[
\pi'
\hookrightarrow 
u_\rho(a,b-1)|\cdot|^{-s-1/2} \times \Delta_\rho[A-s,-B-s] \rtimes \pi.
\]
By taking derivatives, we can conclude that 
$\pi' \cong \soc(u_\rho(a,b)|\cdot|^{-s} \rtimes \pi)$.
\par

Noting that $s > 0$, by \cite[Theorem 1.1]{T-irr}, 
$\tau$ is reducible if and only if 
$B+s > A-s-1$, $-A+s > -B-s-1$ and $-A+s \leq A-s$. 
In this case, $\tau$ is of length $2$, and 
the socle $\soc(\tau)$ is isomorphic to
\[
L(\Delta_\rho[B-s,-A-s], \dots, \Delta_\rho[A-s-2,-B-s-2], 
\Delta_\rho[A-s-1,-A-s], \Delta_\rho[B+s, -B-s-1]). 
\]
This fact follows from \cite[Lemma 2.7]{LT} by taking the Zelevinsky dual.
Now suppose that $\pi' \hookrightarrow \soc(\tau) \rtimes \pi$.
Then by the previous case, 
\begin{align*}
\pi' & \hookrightarrow 
u_\rho(a,b-2)|\cdot|^{-s-1} \times \Delta_\rho[A-s-1,-A-s] \times \Delta_\rho[B+s, -B-s-1] \rtimes \pi
\\&\cong
u_\rho(a,b-2)|\cdot|^{-s-1} \times \Delta_\rho[A-s-1,-A-s] \times \Delta_\rho[B+s+1, -B-s] \rtimes \pi.
\end{align*}
Since $B+s+1 > A-s$, by \cite[Theorem 1.1]{T-irr}, 
we see that $\rho|\cdot|^{B+s+1}$ commutes with 
$\Delta_\rho[A-s-1,-A-s]$ and $u_\rho(a,b-2)|\cdot|^{-s-1}$. 
This implies that $D_{\rho|\cdot|^{B+s+1}}(\pi') \not= 0$. 
This contradicts that $D_{\rho|\cdot|^{B+s+1}}(u_\rho(a,b)|\cdot|^s \rtimes \pi) = 0$. 
Therefore, we again have 
$\pi' \hookrightarrow u_\rho(a,b-1)|\cdot|^{-s-1/2} \times \Delta_\rho[B+s,-A+s] \rtimes \pi$,
which implies the claim.

\item
We assume that $s \not\in (1/2)\Z$.
Using 
$u_\rho(a,b) \hookrightarrow 
u_\rho(a,1)|\cdot|^{-\half{b-1}} \times \dots \times u_\rho(a,1)|\cdot|^{\half{b-1}}$, 
a similar argument to the first case works. 
In fact, we do not need to assume that $\psi$ is of good parity in this case.
\end{enumerate}
This completes the proof.
\end{proof}

Let $\pi \in \Irr(G_n)$ be of Arthur type. 
We denote the minimal non-negative real number $s$ 
such that $u_\rho(a,b)|\cdot|^s \rtimes \pi$ is reducible by $s_0$. 
We call $s_0$ the \emph{first reducibility point} for $u_\rho(a,b)|\cdot|^s \rtimes \pi$. 
As in \cite[Section 3 (b)]{T-ext}, for $0 \leq s < s_0$, 
the irreducible induction $u_\rho(a,b)|\cdot|^s \rtimes \pi$ is unitary. 
Moreover, by \cite[Section 3 (c)]{T-ext}, 
all irreducible constituents of $u_\rho(a,b)|\cdot|^{s_0} \rtimes \pi$ are also unitary. 
Therefore, to attack the unitary dual problem for classical groups, 
it is important to compute $s_0$. 

\begin{cor}\label{FRP}
Let $\pi \in \Irr(G_n)$ be of Arthur type. 
Then we can compute the first reducibility point $s_0$ for $u_\rho(a,b)|\cdot|^s \rtimes \pi$ algorithmically. 
\end{cor}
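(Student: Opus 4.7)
The plan is to assemble an algorithm from Corollary \ref{irred} (the irreducibility criterion via the two socles $\soc(\Pi_{\pm s})$) together with the explicit socle computation of Theorem \ref{main}, after cutting the search for $s_0$ down to a finite, effectively computable set of candidate values of $s$.

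First I would apply Theorem \ref{irred-bad} to discard all but countably many values of $s$: reducibility can occur only when $s \in (1/2)\Z$, and within this lattice only when the integrality of $s$ is compatible with the parity of $\psi \oplus (\rho \boxtimes S_a \boxtimes S_b)^{\oplus 2}$, where $\psi$ is any $A$-parameter with $\pi \in \Pi_\psi$. Next, I would bound the candidates from above. Let $M$ denote the maximum absolute value of an exponent $x$ such that $\rho|\cdot|^x$ appears in the cuspidal support of $\pi$, which is read off from the extended multi-segment $\EE$ with $\pi = \pi(\EE)$. Setting $A = (a+b)/2 - 1$, for $s > A + M$ the cuspidal support of $u_\rho(a,b)|\cdot|^s$ lies strictly above $M$, hence is disjoint both from the cuspidal support of $\pi$ and from its dual; a standard disjointness-of-cuspidal-supports argument then forces $\Pi_s$ to be irreducible. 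Consequently $s_0$, if it exists, lies in the finite set $(1/2)\Z \cap [0, A+M]$ satisfying the parity condition of Theorem \ref{irred-bad}.

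The algorithm then iterates through these finitely many candidates in increasing order. At $s = 0$, the decomposition of Theorem \ref{s=0} detects reducibility as soon as more than one summand $\pi(\EE_{(l,\eta)})$ is nonzero. For $s > 0$ in the candidate set, Theorem \ref{main} (via Propositions \ref{s>>0}, \ref{middle} and \ref{image}) produces the Langlands data of both $\soc(\Pi_s)$ and $\soc(\Pi_{-s})$, after which Corollary \ref{irred} decides irreducibility of $\Pi_s$ by the bookkeeping test that both socles are irreducible and isomorphic as Langlands data. One returns the smallest candidate where this test fails, or declares $\Pi_s$ irreducible for all $s \geq 0$ if no such candidate exists.

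The main obstacle I anticipate is a clean justification of the upper bound $A + M$, since the disjoint-cuspidal-support irreducibility result for classical groups, although standard, needs to be cited precisely. As a safety net, one can avoid that external input entirely by arguing from Theorem \ref{main} itself: for $s > A + M$ the derivative chains used in Propositions \ref{s>>0} to compute $\soc(\Pi_s)$ and $\soc(\Pi_{-s})$ produce identical Langlands data, so Corollary \ref{irred} already delivers the irreducibility internally. With that in place, the rest is pure bookkeeping over the tools already established in the paper.
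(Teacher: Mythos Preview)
Your approach is essentially the paper's: restrict to $s \in (1/2)\Z$ via Theorem \ref{irred-bad}, then for each candidate compute $\soc(\Pi_{\pm s})$ by Propositions \ref{s>>0}, \ref{middle}, \ref{image} and Theorem \ref{s=0}, and decide irreducibility with Corollary \ref{irred}. The paper's proof stops there and does not supply a termination bound; your explicit upper bound $A+M$ is extra rigor the paper omits, and the disjoint-support justification is standard, though you may want $s > A+M+1$ so that the segments of $u_\rho(a,b)|\cdot|^{s}$, its dual, and $\pi$ are genuinely unlinked rather than merely having disjoint exponents.
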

\begin{proof}
By Theorem \ref{irred-bad}, $s_0$ belongs to $(1/2)\Z$. 
Moreover, 
by computing $\soc(u_\rho(a,b)|\cdot|^s \rtimes \pi)$ and $\soc(u_\rho(a,b)|\cdot|^{-s} \rtimes \pi)$
using Propositions \ref{s>>0}, \ref{middle}, \ref{image} and Theorem \ref{s=0}, 
we can determine $s_0$ by Corollary \ref{irred}.
\end{proof}

%\subsection{Examples}
\subsection{Examples}
Now we give some examples. 
In this subsection, we set $\rho = \1_{\GL_1(F)}$.
When $\phi = \rho \boxtimes (S_{2x_1+1} \oplus \dots \oplus S_{2x_r+1})$ 
and $\ep(\rho \boxtimes S_{2x_i+1}) = \ep_i$, 
we write $\pi(\phi, \ep) = \pi(x_1^{\ep_1}, \dots, x_r^{\ep_r})$.

\begin{ex}
Let us consider 
\[
u_\rho(2,3)|\cdot|^{s} \rtimes \1_{\Sp_{0}(F)},
\]
which is a representation of $\Sp_{12}(F)$.
We compute the socle of this representation for $s = \pm1/2$. 

\begin{enumerate}
\item
When $s = 1/2$, by Proposition \ref{middle}, 
\[
\soc(u_\rho(2,3)|\cdot|^{\half{1}} \rtimes \1_{\Sp_{0}(F)})
\hookrightarrow 
Z_\rho[0,2] \times Z_\rho[-1,1] \rtimes \1_{\Sp_0(F)}. 
\]
Noting that $\1_{\Sp_0(F)} = \pi(\{([0,0]_\rho,0,1)\})$, 
by Theorem \ref{s=0}, we have 
\begin{align*}
Z_\rho[-1,1] \rtimes \1_{\Sp_0(F)} 
\cong &
\pi(\{([1,-1]_\rho,1,1), ([1,-1]_\rho,1,1), ([0,0]_\rho,0,1)\}) 
\\&\oplus \pi(\{([1,-1]_\rho,1,-1), ([1,-1]_\rho,1,-1), ([0,0]_\rho,0,1)\})
\\\cong &
L((\rho|\cdot|^{-1})^{2}; \pi(0^+,0^+,0^+)) 
\oplus L(\rho|\cdot|^{-1}; \pi(0^-,0^-,1^+)). 
\end{align*}
By considering Proposition \ref{image}, we have 
\begin{align*}
&\soc(u_\rho(2,3)|\cdot|^{\half{1}} \rtimes \1_{\Sp_{0}(F)})
\\&\cong 
\soc(Z_\rho[0,2] \rtimes L((\rho|\cdot|^{-1})^{2}; \pi(0^+,0^+,0^+)) )
\oplus
\soc(Z_\rho[0,2] \rtimes L(\rho|\cdot|^{-1}; \pi(0^-,0^-,1^+)) ) 
\\&\cong 
L(\rho|\cdot|^{-1}, \Delta_\rho[0,-2]; \pi(0^+,0^+,1^+)) 
\oplus
L(\Delta_\rho[0,-1]; \pi(0^-,1^-,2^+)).
\end{align*}

\item
When $s = -1/2$, by Proposition \ref{middle}, 
\[
\soc(u_\rho(2,3)|\cdot|^{-\half{1}} \rtimes \1_{\Sp_{0}(F)})
\hookrightarrow 
\Delta_\rho[-1,-2] \times u_\rho(2,2) \rtimes \1_{\Sp_0(F)}. 
\]
By Theorem \ref{s=0}, we have 
\begin{align*}
u_\rho(2,2) \rtimes \1_{\Sp_0(F)} 
\cong &
\pi(\{([0,0]_\rho,0,1),([1,0]_\rho,1,1),([1,0]_\rho,1,-1)\}) 
\\&\oplus \pi(\{([0,0]_\rho,0,1),([1,0]_\rho,0,1),([1,0]_\rho,0,-1)\}) 
\\\cong &
L(\Delta_\rho[0,-1]^{2}; \pi(0^+)) 
\oplus L(\Delta_\rho[0,-1]; \pi(0^+,0^+,1^+)). 
\end{align*}
By considering Proposition \ref{image}, we have 
\begin{align*}
&\soc(u_\rho(2,3)|\cdot|^{-\half{1}} \rtimes \1_{\Sp_{0}(F)})
\\&\cong 
\soc(\Delta_\rho[-1,-2] \rtimes L(\Delta_\rho[0,-1]^{2}; \pi(0^+)) )
\oplus
\soc(\Delta_\rho[-1,-2] \rtimes L(\Delta_\rho[0,-1]; \pi(0^+,0^+,1^+)) ) 
\\&\cong 
L(\Delta_\rho[-1,-2],\Delta_\rho[0,-1]^{2}; \pi(0^+))
\oplus
L(\Delta_\rho[-1,-2],\Delta_\rho[0,-1]; \pi(0^+,0^+,1^+)). 
\end{align*}
\end{enumerate}
In particular, we see that 
the length of $u_\rho(2,3)|\cdot|^{1/2} \rtimes \1_{\Sp_{0}(F)}$ is at least $4$. 
\end{ex}

\begin{ex}
Let us consider $a = b = 4$ and 
\[
\EE = \{([3,-1]_\rho,2,-1), ([3,1]_\rho,0,-1), ([2,2]_\rho,0,-1)\}.
\]
Note that 
\[
\pi(\EE) = L(\Delta_\rho[-1,-3], \Delta_\rho[0,-2], \Delta_\rho[2,-3]; \pi(1^-,1^-,2^+)). 
\]
We determine the first reducibility point $s_0$ for $u_\rho(4,4)|\cdot|^s \rtimes \pi(\EE)$.
To do this, we compute its socles for some $s \in \Z$.

\begin{enumerate}
\item
When $s = 0$, we have 
\[
u_\rho(4,4) \rtimes \pi(\EE) = 
\pi(\EE_{(2,+1)}^{(0)}) \oplus \pi(\EE_{(1,+1)}^{(0)}) 
\oplus \pi(\EE_{(1,-1)}^{(0)}) \oplus \pi(\EE_{(0,+1)}^{(0)}) \oplus \pi(\EE_{(0,-1)}^{(0)}), 
\]
where $\EE_{(l,\eta)}^{(0)} = \EE \cup \{([3,0]_\rho,l,\eta), ([3,0]_\rho,l,-\eta)\}$. 
By \cite[Theorems 1.3, 1.4]{At}, 
we have 
$\pi(\EE_{(2,+1)}^{(0)}) = \pi(\EE_{(1,+1)}^{(0)}) 
= \pi(\EE_{(0,+1)}^{(0)}) = \pi(\EE_{(0,-1)}^{(0)}) = 0$. 
Hence $u_\rho(4,4) \rtimes \pi(\EE) = \pi(\EE_{(1,-1)}^{(0)})$ is irreducible. 

\item
When $s = 1$, we have 
\[
u_\rho(4,4)|\cdot|^1 \rtimes \pi(\EE) 
\hookrightarrow u_\rho(2,4)|\cdot|^2 \times u_\rho(2,4) \rtimes \pi(\EE). 
\]
As in the previous case, 
we have $u_\rho(2,4) \rtimes \pi(\EE) = \pi(\EE_{(2,-1)}^{(1)}) \oplus \pi(\EE_{(1,-1)}^{(1)})$
with both $\pi(\EE_{(2,-1)}^{(1)})$ and $\pi(\EE_{(1,-1)}^{(1)})$ being nonzero, 
where $\EE_{(l,\eta)}^{(1)} = \EE \cup \{([2,-1]_\rho,l,\eta), ([2,-1]_\rho,l,-\eta)\}$.
However, since 
$D^{(1)}_{\rho|\cdot|^3} \circ D^{(2)}_{\rho|\cdot|^2} \circ D^{(1)}_{\rho|\cdot|^1}(\pi(\EE)) \not= 0$ but
$D^{(1)}_{\rho|\cdot|^3} \circ D^{(2)}_{\rho|\cdot|^2} \circ D^{(1)}_{\rho|\cdot|^1}
(\pi(\EE_{(2,-1)}^{(1)})) = 0$, 
by Proposition \ref{middle}, we conclude that
$\soc(u_\rho(4,4)|\cdot|^1 \rtimes \pi(\EE)) 
= \soc(u_\rho(2,4)|\cdot|^2 \rtimes \pi(\EE_{(1,-1)}^{(1)}))$
is irreducible. 
Also, we note that 
\[
D_{|\cdot|^3}^{(2)} \circ D_{|\cdot|^2}^{(3)} \circ D_{|\cdot|^1}^{(2)}
\left( \soc(u_\rho(2,4)|\cdot|^2 \rtimes \pi(\EE_{(1,-1)}^{(1)})) \right)
\not= 0.
\]

\item
When $s = -1$, we have 
\[
u_\rho(4,4)|\cdot|^{-1} \rtimes \pi(\EE) 
\hookrightarrow u_\rho(4,2)|\cdot|^{-2} \times u_\rho(4,2) \rtimes \pi(\EE). 
\]
As above, we see that $u_\rho(4,2) \rtimes \pi(\EE) = \pi(\EE_{(0,-1)}^{(-1)})$ is irreducible, 
where $\EE_{(0,-1)}^{(-1)} = \EE \cup \{([2,1]_\rho,0,-1), ([2,1]_\rho,0,1)\}$.
In particular, 
$\soc(u_\rho(4,4)|\cdot|^{-1} \rtimes \pi(\EE)) 
= \soc(u_\rho(4,2)|\cdot|^{-2} \rtimes \pi(\EE_{(0,-1)}^{(-1)}))$
is also irreducible. 
Note that 
\[
D_{|\cdot|^3}^{(2)} \circ D_{|\cdot|^2}^{(3)} \circ D_{|\cdot|^1}^{(2)}
\left( \soc(u_\rho(4,2)|\cdot|^{-2} \rtimes \pi(\EE_{(0,-1)}^{(-1)})) \right) 
= 0.
\]
Hence we have
\[
\soc(u_\rho(4,4)|\cdot|^{-1} \rtimes \pi(\EE))
\not\cong 
\soc(u_\rho(4,4)|\cdot|^{1} \rtimes \pi(\EE)), 
\]
which means that $u_\rho(4,4)|\cdot|^{1} \rtimes \pi(\EE)$ is reducible. 
\end{enumerate}
Since $s_0 \in \Z$ by Theorem \ref{irred-bad}, we conclude that $s_0 = 1$.
\end{ex}

\begin{ex}
Let $\psi = \rho \boxtimes (S_2 \boxtimes S_2 + S_5 \boxtimes S_3) \in \Psi_\gp(\Sp_{18}(F))$.
Then $\Pi_\psi = \{\pi(\EE_i) \;|\; 1 \leq i \leq 5\}$ with 
\begin{align*}
\EE_1 &= \{ ([1, 0], 1, 1), ([3, 1], 1, 1) \}, \\
\EE_2 &= \{ ([1, 0], 0, -1), ([3, 1], 0, 1) \}, \\
\EE_3 &= \{ ([1, 0], 1, 1), ([3, 1], 0, -1) \}, \\
\EE_4 &= \{ ([1, 0], 0, 1), ([3, 1], 1, -1) \}, \\
\EE_5 &= \{ ([1, 0], 0, -1), ([3, 1], 1, -1) \}. 
\end{align*}
Note that $\Sc_\psi$ has exactly two characters. 
By \cite[Theorem 3.5]{At}, we have $\pair{\cdot, \pi(\EE_i)}_{\psi} = \1 \iff i = 1,3$.  
Now, for $1 \leq i \leq 5$, 
let $s_i$ be the first reducibility point for $u_\rho(4,2)|\cdot|^s \rtimes \pi(\EE_i)$.
Note that $s_i \in \Z$ by Theorem \ref{irred-bad}. 
We compute $s_i$ for $1 \leq i \leq 5$.

\begin{enumerate}
\item
When $s = 0$, 
by Theorem \ref{s=0} together with \cite[Theorem 1.4]{At}, we have 
\begin{align*}
u_\rho(4,2) \rtimes \pi(\EE_1) \cong 
&\pi(\EE_1 \cup \{([2, 1], 0, 1), ([2, 1], 0, -1)\})
\\&\oplus \pi(\EE_1 \cup \{ ([2, 1], 1, 1), ([2, 1], 1, -1)\}), \\
u_\rho(4,2) \rtimes \pi(\EE_2) \cong 
&\pi(\EE_2 \cup \{([2, 1], 0, 1), ([2, 1], 0, -1)\}), \\
u_\rho(4,2) \rtimes \pi(\EE_3) \cong 
&\pi(\EE_3 \cup \{([2, 1], 0, -1), ([2, 1], 0, 1)\}), \\
u_\rho(4,2) \rtimes \pi(\EE_4) \cong 
&\pi(\EE_4 \cup \{([2, 1], 1, 1), ([2, 1], 1, -1)\})
\\&\oplus \pi(\EE_4 \cup \{([2, 1], 0, -1), ([2, 1], 0, 1)\}), \\
u_\rho(4,2) \rtimes \pi(\EE_5) \cong 
&\pi(\EE_5 \cup \{([2, 1], 1, 1), ([2, 1], 1, -1)\}).
\end{align*}
In particular, $u_\rho(4,2) \rtimes \pi(\EE_i)$ is reducible 
if and only if $i = 1,4$ so that $s_1 = s_4 = 0$. 

\item
When $s = \pm1$, 
by Propositions \ref{middle}, \ref{image} and \ref{s>>0}, we have
\begin{align*}
\soc(u_\rho(4,2)|\cdot|^1 \rtimes \pi(\EE_2)) 
&\cong 
L(\Delta_\rho[0,-1],\Delta_\rho[1,-3]; \pi(0^-,1^-,2^-,2^-,3^+)), \\
\soc(u_\rho(4,2)|\cdot|^{-1} \rtimes \pi(\EE_2)) 
&\cong 
L(\Delta_\rho[0,-3],\Delta_\rho[1,-2]; \pi(0^-,1^+,1^+,2^-,3^+)), \\
\soc(u_\rho(4,2)|\cdot|^1 \rtimes \pi(\EE_3)) 
&\cong 
L(\Delta_\rho[0,-3],\Delta_\rho[0,-1],\Delta_\rho[1,-2]; \pi(1^-,2^+,3^-)), \\
\soc(u_\rho(4,2)|\cdot|^{-1} \rtimes \pi(\EE_3)) 
&\cong 
L(\Delta_\rho[0,-3],\Delta_\rho[0,-1],\Delta_\rho[1,-2]; \pi(1^-,2^+,3^-)), \\
\soc(u_\rho(4,2)|\cdot|^1 \rtimes \pi(\EE_5)) 
&\cong 
L(\Delta_\rho[0,-3],\Delta_\rho[2,-3]; \pi(0^-,1^+,1^+,1^+,2^-)), \\
\soc(u_\rho(4,2)|\cdot|^{-1} \rtimes \pi(\EE_5)) 
&\cong 
L(\Delta_\rho[0,-3],\Delta_\rho[1,-3],\Delta_\rho[1,-2]; \pi(0^-,1^+,2^-)).
\end{align*}
In particular, for any $i = 2,3,5$, 
the socle $\soc(u_\rho(4,2)|\cdot|^{\pm1} \rtimes \pi(\EE_i))$ is irreducible. 
Since 
$\soc(u_\rho(4,2)|\cdot|^{1} \rtimes \pi(\EE_i)) \not\cong 
\soc(u_\rho(4,2)|\cdot|^{-1} \rtimes \pi(\EE_i))$ for $i = 2,5$, 
we have $s_2 = s_5 = 1$. 
On the other hand, $u_\rho(4,2)|\cdot|^{1} \rtimes \pi(\EE_3)$ is irreducible. 

\item
When $s = \pm2$, by Proposition \ref{s>>0}, we have 
\begin{align*}
\soc(u_\rho(4,2)|\cdot|^2 \rtimes \pi(\EE_3)) 
&\cong 
L(\Delta_\rho[0,-3],\Delta_\rho[1,-2]; \pi(1^-,3^+,4^-)), \\
\soc(u_\rho(4,2)|\cdot|^{-2} \rtimes \pi(\EE_3)) 
&\cong 
L(\Delta_\rho[-1,-4],\Delta_\rho[0,-3],\Delta_\rho[0,-1]; \pi(1^-,2^+,3^-)). 
\end{align*}
Hence $u_\rho(4,2)|\cdot|^{2} \rtimes \pi(\EE_3)$ is reducible so that $s_3 = 2$.
\end{enumerate}
\end{ex}

\appendix
%\section{Computations for certain derivatives}
%\section{Computations for certain derivatives}
\section{Computations for certain derivatives}\label{appA}
Recall that when $\pi \in \Irr(G_n)$ is $\rho|\cdot|^{-1}$-reduced (\resp $\rho|\cdot|^1$-reduced), 
the highest $\Delta_\rho[0,-1]$-derivative $D^{\max}_{\Delta_\rho[0,-1]}(\pi)$
(\resp the highest $Z_\rho[0,1]$-derivative $D^{\max}_{Z_\rho[0,1]}(\pi)$)
is irreducible (\cite[Proposition 3.7]{AM}).
In \cite{AM}, 
explicit formulas for $\Delta_\rho[0,-1]$-derivatives and for $Z_\rho[0,1]$-derivatives were 
given only for irreducible representations satisfying some specific conditions.
The goal of this appendix is to compute these derivatives for $\pi$ of good parity in general. 
\par

Here, we say that an irreducible representation $\pi$ is \emph{of good parity}
if $\pi$ is a subrepresentation of an induced representation of the form 
$\rho_1|\cdot|^{s_1} \times \dots \times \rho_r|\cdot|^{s_r} \rtimes \sigma$, 
where
\begin{itemize}
\item
$\rho_i \in \Cusp^\bot(\GL_{d_i}(F))$ and $s_i \in (1/2)\Z$; 
\item
$\sigma$ is an irreducible supercuspidal representation of $G_{n_0}$; 
\item
$\rho_i|\cdot|^{s_i + m_i} \rtimes \sigma$ is reducible for some $m_i \in \Z$.
\end{itemize}

%\subsection{Derivatives for $\GL_n(F)$}
\subsection{Derivatives for $\GL_n(F)$}\label{der.gl}
Before dealing with classical groups, 
we fix notations and recall some facts on representations of $\GL_n(F)$. 
For these facts, see \cite{LM} and its references.
\par

Denote $P_{(m,n-m)}$ by the maximal standard parabolic subgroup of $\GL_n(F)$
with Levi $\GL_{m}(F) \times \GL_{n-m}(F)$. 
For a smooth representation $\tau$ of $\GL_n(F)$ of finite length, 
define the \emph{left $\rho|\cdot|^x$-derivative} $L_{\rho|\cdot|^x}^{(k)}(\tau)$ and 
the \emph{right $\rho|\cdot|^x$-derivative} $R_{\rho|\cdot|^x}^{(k)}(\tau)$ by 
\begin{align*}
[\Jac_{P_{(dk,n-dk)}}(\tau)] 
&= (\rho|\cdot|^x)^k \boxtimes L_{\rho|\cdot|^x}^{(k)}(\tau) + (\text{others}), \\
[\Jac_{P_{(n-dk,dk)}}(\tau)] 
&= R_{\rho|\cdot|^x}^{(k)}(\tau) \boxtimes (\rho|\cdot|^x)^k + (\text{others}).
\end{align*}
The highest derivatives $L^{\max}_{\rho|\cdot|^x}(\tau)$ and $R^{\max}_{\rho|\cdot|^x}(\tau)$
are defined similar as in Section \ref{s.der}. 
It is known that if $\tau$ is irreducible, 
then $L^{\max}_{\rho|\cdot|^x}(\tau)$ and $R^{\max}_{\rho|\cdot|^x}(\tau)$ are also irreducible
(see \cite[Lemma 2.1]{LM}). 
Moreover, 
the Langlands data for $L^{\max}_{\rho|\cdot|^x}(\tau)$ (\resp $R^{\max}_{\rho|\cdot|^x}(\tau)$) 
can be described from those for $\tau$ explicitly, and vice versa
(see, e.g., \cite[Theorem 5.11]{LM}).
\par

Similarly, following \cite[Section 3.4]{AM}, we can define 
\begin{itemize}
\item
the \emph{highest left $\Delta_\rho[0,-1]$-derivative} $L^{\max}_{\Delta_\rho[0,-1]}(\tau)$; 
\item
the \emph{highest right $\Delta_\rho[0,-1]$-derivative} $R^{\max}_{\Delta_\rho[0,-1]}(\tau)$; 
\item
the \emph{highest left $Z_\rho[0,1]$-derivative} $L^{\max}_{Z_\rho[0,1]}(\tau)$; 
\item
the \emph{highest right $Z_\rho[0,1]$-derivative} $R^{\max}_{Z_\rho[0,1]}(\tau)$. 
\end{itemize}
If $\tau$ is irreducible and left $\rho|\cdot|^{1}$-reduced, 
i.e., if $L_{\rho|\cdot|^1}^{(1)}(\tau) = 0$,
then $L^{\max}_{Z_\rho[0,1]}(\tau)$ is also irreducible. 
In this case, 
if we write 
$L^{\max}_{\rho|\cdot|^{1}}\circ L^{\max}_{\rho}(\tau) 
= L^{(k_1)}_{\rho|\cdot|^{1}}\circ L^{(k_0)}_{\rho}(\tau) = \tau'$, 
then $k_0 \geq k_1$ and we have 
\[
L^{\max}_{Z_\rho[0,1]}(\tau) 
= L^{(k_1)}_{Z_\rho[0,1]}(\tau) 
= \soc \left( \rho^{k_0-k_1} \times \tau' \right).
\]
Similar properties hold for other derivatives. 
These facts can be proven by the same argument as \cite[Lemma 3.5]{AM}.
\par

On the other hand, for any irreducible representation $\tau$ of $\GL_n(F)$, 
the socle of $Z_\rho[0,1]^r \times \tau$ is irreducible, and it can be computed by
\[
\soc(Z_\rho[0,1]^r \times \tau) 
= \soc\left(
\rho^{k_0+r} \times
\soc\left( (\rho|\cdot|^1)^r \times L_\rho^{(k_0)}(\tau) \right)
\right), 
\]
where we write $L_\rho^{\max}(\tau) = L_\rho^{(k_0)}(\tau)$.
Similar properties hold for the socles of 
$\tau \times Z_\rho[0,1]^r$, $\Delta_{\rho}[0,-1]^r \times \tau$ and $\tau \times \Delta_\rho[0,-1]^r$.
See \cite[Proposition 5.6]{LM}.

%\subsection{$\Delta_\rho[0,-1]$-derivatives}
\subsection{$\Delta_\rho[0,-1]$-derivatives}\label{sec.[0-1]}
Let $\pi$ be an irreducible representation of $G_n$. 
Suppose that $\pi$ is $\rho|\cdot|^{-1}$-reduced.  
Then $D^{\max}_{\Delta_\rho[0,-1]}(\pi)$ is irreducible (\cite[Proposition 3.7]{AM}). 
In a special case, an explicit formula for $D^{\max}_{\Delta_\rho[0,-1]}(\pi)$ was 
given in \cite[Proposition 3.8]{AM}.
In this subsection, we generalize this formula. 
\par

\begin{prop}
Write $\pi = L(\Delta_{\rho_1}[x_1,y_1], \dots, \Delta_{\rho_r}[x_r,y_r]; \pi_\temp)$ 
as in the Langlands classification. 
Suppose that $\pi$ is $\rho|\cdot|^{-1}$-reduced.  
Then 
\[
D^{\max}_{\Delta_\rho[0,-1]}(\pi)
\hookrightarrow 
L^{\max}_{\Delta_\rho[0,-1]}\left(L(\Delta_{\rho_1}[x_1,y_1], \dots, \Delta_{\rho_r}[x_r,y_r])\right)
\rtimes \pi_\temp. 
\]
\end{prop}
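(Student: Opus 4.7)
The plan is to bootstrap from the Langlands embedding and the corresponding $\GL$-side derivative embedding. Set $\tau = L(\Delta_{\rho_1}[x_1,y_1], \dots, \Delta_{\rho_r}[x_r,y_r])$, so by the Langlands classification $\pi \hookrightarrow \tau \rtimes \pi_\temp$. Let $k$ be the order of the highest left $\Delta_\rho[0,-1]$-derivative of $\tau$, so that $L^{\max}_{\Delta_\rho[0,-1]}(\tau) = L^{(k)}_{\Delta_\rho[0,-1]}(\tau)$. By the $\GL$-analog of the highest-derivative embedding recalled in Section \ref{der.gl}, one has $\tau \hookrightarrow \Delta_\rho[0,-1]^k \times L^{\max}_{\Delta_\rho[0,-1]}(\tau)$, so exactness of parabolic induction yields
\[
\pi \hookrightarrow \Delta_\rho[0,-1]^k \rtimes \bigl( L^{\max}_{\Delta_\rho[0,-1]}(\tau) \rtimes \pi_\temp \bigr).
\]

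Next I would verify that $k$ coincides with the highest order $k^*$ such that $D^{(k^*)}_{\Delta_\rho[0,-1]}(\pi) \ne 0$. The inequality $k^* \geq k$ is immediate from the above embedding via Frobenius reciprocity, which forces $D^{(k)}_{\Delta_\rho[0,-1]}(\pi) \ne 0$. For the reverse inequality $k^* \leq k$, I would apply Tadi\'c's formula $\mu^*(\tau \rtimes \pi_\temp) = M^*(\tau) \rtimes \mu^*(\pi_\temp)$ and enumerate all contributions to the $\Delta_\rho[0,-1]^{k^*}$-isotypic part of $\Jac_{P_{2dk^*}}(\tau \rtimes \pi_\temp)$. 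A generic term has $\rho|\cdot|^{-1}$-pieces drawn from both the $M^*(\tau)$ factor and the $\mu^*(\pi_\temp)$ factor; temperedness of $\pi_\temp$ forces its cuspidal exponents to be non-negative, so $\pi_\temp$ is itself $\rho|\cdot|^{-1}$-reduced and contributes no $\rho|\cdot|^{-1}$. Combined with the hypothesis that $\pi$ is $\rho|\cdot|^{-1}$-reduced, this rules out every mixed term, confining all $\rho|\cdot|^{-1}$-pieces to the $\tau$-side and hence $k^* \leq k$.

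With $k^* = k$ established, Frobenius reciprocity applied to the displayed embedding produces a nonzero $\GL_{2dk}(F) \times G_{n-2dk}$-homomorphism from $\Delta_\rho[0,-1]^k \boxtimes D^{\max}_{\Delta_\rho[0,-1]}(\pi)$ (the $\Delta_\rho[0,-1]^k$-isotypic summand of the semisimplification of $\Jac_{P_{2dk}}(\pi)$) into $\Delta_\rho[0,-1]^k \boxtimes \bigl( L^{\max}_{\Delta_\rho[0,-1]}(\tau) \rtimes \pi_\temp \bigr)$. Since $D^{\max}_{\Delta_\rho[0,-1]}(\pi)$ is irreducible by \cite[Proposition 3.7]{AM}, the induced map from $D^{\max}_{\Delta_\rho[0,-1]}(\pi)$ into $L^{\max}_{\Delta_\rho[0,-1]}(\tau) \rtimes \pi_\temp$ is injective, giving the desired embedding.

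The main obstacle will be the Tadi\'c-formula bookkeeping in the middle step. The formula involves a dualization of certain factors in $M^*(\tau)$, and controlling which combinations can assemble into $\Delta_\rho[0,-1]$-blocks requires a careful case analysis in the combinatorics of the triple decomposition of $\tau$'s Jacquet module. The two reducedness conditions---$\rho|\cdot|^{-1}$-reducedness of $\pi$ (hypothesis) and of $\pi_\temp$ (consequence of temperedness)---are the essential inputs that eliminate every contribution except the pure $\tau$ one.
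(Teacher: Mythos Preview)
Your approach is essentially the same as the paper's: start from the Langlands embedding, compose with the $\GL$-side embedding $\tau \hookrightarrow \Delta_\rho[0,-1]^k \times L^{(k)}_{\Delta_\rho[0,-1]}(\tau)$, verify that $k$ is the highest order, and conclude via Frobenius reciprocity. The paper carries out the middle step more cleanly by checking three concrete conditions ensuring that $L^{(k)}_{\Delta_\rho[0,-1]}(\tau) \rtimes \pi_\temp$ is $\Delta_\rho[0,-1]$-reduced: (i) $L^{(k)}_{\Delta_\rho[0,-1]}(\tau)$ is left $\rho|\cdot|^{-1}$-reduced, (ii) $x_i+y_i<0$ forces $y_i \neq 0,1$, and (iii) $\pi_\temp$ is $\rho|\cdot|^{-1}$-reduced by Casselman's criterion. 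You correctly identify (iii) and, implicitly, (i) via the hypothesis on $\pi$, but your sentence ``confining all $\rho|\cdot|^{-1}$-pieces to the $\tau$-side and hence $k^* \leq k$'' does not yet dispose of the right-dualized contributions from $M^*(\tau)$: a $\rho|\cdot|^{-1}$ or $\rho$ could in principle arise from $\tau_2^\vee$ with $y_i \in \{0,1\}$, and it is precisely condition (ii) that kills this. You flag exactly this bookkeeping as the obstacle in your final paragraph, so the gap is one you already see; filling it amounts to observing that $x_i + y_i < 0$ together with $x_i \geq y_i$ forces $y_i < 0$, hence $y_i \neq 0,1$.
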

\begin{proof}
Write $\tau = L(\Delta_{\rho_1}[x_1,y_1], \dots, \Delta_{\rho_r}[x_r,y_r])$ and 
$L^{\max}_{\Delta_\rho[0,-1]}(\tau) = L^{(k)}_{\Delta_\rho[0,-1]}(\tau)$.
Note that $L^{\max}_{\Delta_\rho[0,-1]}(\tau)$ is irreducible 
since $\tau$ is left $\rho|\cdot|^{-1}$-reduced. 
Clearly, we have an inclusion 
\[
\pi \hookrightarrow 
\Delta_\rho[0,-1]^k \times 
L^{(k)}_{\Delta_\rho[0,-1]}(\tau) \rtimes \pi_\temp. 
\]
Since
\begin{itemize}
\item
$L^{\max}_{\Delta_\rho[0,-1]}(\tau)$ is left $\rho|\cdot|^{-1}$-reduced; 
\item
$x_i+y_i < 0$ so that $y_i \not= 0, 1$; 
\item
$\pi_\temp$ is $\rho|\cdot|^{-1}$-reduced (Casselman's criterion), 
\end{itemize}
we see that $D^{(k)}_{\Delta_\rho[0,-1]}(\pi)$ is the highest $\Delta_\rho[0,-1]$-derivative, 
and 
\[
D^{(k)}_{\Delta_\rho[0,-1]}(\pi) \hookrightarrow 
L^{(k)}_{\Delta_\rho[0,-1]}(\tau) \rtimes \pi_\temp. 
\]
This completes the proof.
\end{proof}

%\subsection{$Z_\rho[0,1]$-derivatives: A special case}
\subsection{$Z_\rho[0,1]$-derivatives: A special case}
Let $\pi$ be an irreducible representation of $G_n$. 
Suppose that $\pi$ is of good parity and $\rho|\cdot|^1$-reduced.  
Then $D^{\max}_{Z_\rho[0,1]}(\pi)$ is irreducible (\cite[Proposition 3.7]{AM}). 
When $\pi$ is further $\rho|\cdot|^z$-reduced for any $z < 0$, 
an explicit formula for $D^{\max}_{Z_\rho[0,1]}(\pi)$ was 
given in \cite[Theorem 8.1, Proposition 8.4]{AM}.
In this and next subsections, we generalize this formula. 
\par

Here, we consider a special case, which is the main case. 
Suppose that $\pi$ is of the form
\[
\pi = L((\rho|\cdot|^{-1})^s, \Delta_\rho[0,-1]^t; \pi(\phi,\ep))
\]
for $s,t \geq 0$ and $\phi \in \Phi_\gp(G_{n_0})$.
Set 
\[
\delta = \left\{
\begin{aligned}
&1 \iif 
\text{$\rho, \rho \boxtimes S_3 \subset \phi$ and $\ep(\rho)\ep(\rho \boxtimes S_3) \not= (-1)^t$}, \\
&0 \other.
\end{aligned}
\right. 
\]
Then by \cite[Theorem 7.1]{AM}, 
we have $D_{\rho|\cdot|^1}^{\max}(\pi) = D_{\rho|\cdot|^1}^{(k)}(\pi)$ with 
\[
k = \min\{s-m_\phi(\rho)+\delta, 0\} + m_\phi(\rho \boxtimes S_3) - \delta, 
\]
where 
$m_\phi(\rho)$ (\resp $m_\phi(\rho \boxtimes S_3)$) denotes 
the multiplicity of $\rho$ (\resp $\rho \boxtimes S_3$) in $\phi$.
In particular, $\pi$ is $\rho|\cdot|^1$-reduced if and only if 
$m_\phi(\rho \boxtimes S_3) = \delta$ and $s \leq m_\phi(\rho) - \delta$. 
The following is a generalization of \cite[Proposition 8.4]{AM}. 

\begin{prop}
Let $\pi = L((\rho|\cdot|^{-1})^s, \Delta_\rho[0,-1]^t; \pi(\phi,\ep))$ be as above. 
Suppose that $\pi$ is $\rho|\cdot|^1$-reduced. 
Write $m = m_\phi(\rho)$ so that $s \leq m-\delta$. 

\begin{enumerate}
\item
If $\delta = 1$ and $m \equiv s+1 \bmod 2$, 
then the highest $Z_\rho[0,1]$-derivative of $\pi$ is 
\[
D_{Z_\rho[0,1]}^{(t)}(\pi)
= \left\{
\begin{aligned}
&L((\rho|\cdot|^{-1})^s; \pi(\phi, \ep)) \iif t \equiv 0 \bmod 2, \\
&L((\rho|\cdot|^{-1})^{s+1}; \pi(\phi + \rho - \rho \boxtimes S_3, \ep)) \iif t \equiv 1 \bmod 2.
\end{aligned}
\right. 
\]

\item
If $\delta = 1$ and $m \equiv s \bmod 2$, 
then the highest $Z_\rho[0,1]$-derivative of $\pi$ is 
\[
D_{Z_\rho[0,1]}^{(t+1)}(\pi)
= \left\{
\begin{aligned}
&\pi(\phi - \rho - \rho \boxtimes S_3, \ep') \iif t \equiv 0 \bmod 2, s = 0, \\
&L((\rho|\cdot|^{-1})^{s-1}; \pi(\phi - \rho^{2}, \ep)) \iif t \equiv 0 \bmod 2, s>0, \\
&L((\rho|\cdot|^{-1})^s; \pi(\phi - \rho - \rho \boxtimes S_3, \ep)) \iif t \equiv 1 \bmod 2, 
\end{aligned}
\right. 
\]
where $\ep'$ is given so that 
$\ep'(\rho' \boxtimes S_d) \not= \ep(\rho' \boxtimes S_d) 
\iff \rho' \boxtimes S_d = \rho \boxtimes S_1$. 

\item
If $\delta = 0$ and $m \equiv s+1 \bmod 2$, 
then the highest $Z_\rho[0,1]$-derivative of $\pi$ is 
\[
\left\{
\begin{aligned}
D_{Z_\rho[0,1]}^{(0)}(\pi) 
&= L((\rho|\cdot|^{-1})^s; \pi(\phi, \ep)) \iif t = 0, \\
D_{Z_\rho[0,1]}^{(t-1)}(\pi) 
&= L((\rho|\cdot|^{-1})^{s+1}; \pi(\phi + \rho^{2}, \ep)) \iif t > 0, t \equiv 0 \bmod 2, \\
D_{Z_\rho[0,1]}^{(t-1)}(\pi) 
&= L((\rho|\cdot|^{-1})^{s}, \Delta_\rho[0,-1]; \pi(\phi, \ep)) \iif t > 0, t \equiv 1 \bmod 2.
\end{aligned}
\right.
\]

\item
If $\delta = 0$ and $m \equiv s \bmod 2$, 
then the highest $Z_\rho[0,1]$-derivative of $\pi$ is 
\[
D_{Z_\rho[0,1]}^{(t)}(\pi)
= \left\{
\begin{aligned}
&\pi(\phi, \ep') \iif t \equiv 1 \bmod 2, m > s = 0, \\
&L((\rho|\cdot|^{-1})^{s-1}, \Delta_\rho[0,-1]; \pi(\phi - \rho^{2}, \ep)) 
\iif t \equiv 1 \bmod 2, m > s>0, \\
&L((\rho|\cdot|^{-1})^s; \pi(\phi, \ep)) 
\other, 
\end{aligned}
\right. 
\]
where $\ep'$ is the same as in (2).
\end{enumerate}
\end{prop}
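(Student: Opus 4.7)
The plan is to reduce the computation to the tempered case, which is handled in \cite[Theorem 8.1]{AM}, and to its mild extension to Langlands quotients with only $(\rho|\cdot|^{-1})^s$ factors, which is \cite[Proposition 8.4]{AM} (i.e., the $t=0$ case of our proposition). Starting from the Langlands embedding
\[
\pi \hookrightarrow (\rho|\cdot|^{-1})^s \times \Delta_\rho[0,-1]^t \rtimes \pi(\phi,\ep),
\]
I would compute $D^{\max}_{Z_\rho[0,1]}(\pi)$ by applying the geometric lemma to $\Jac_{P_{2dk}}(\pi)$ and tracking the contributions of $Z_\rho[0,1]^k$ on the $\GL$-side. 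Since $D^{\max}_{Z_\rho[0,1]}(\pi)$ is irreducible by \cite[Proposition 3.7]{AM}, identifying it then reduces to matching Langlands data against the candidate irreducible subquotients that appear.

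Concretely, the Jacquet module of each factor $\Delta_\rho[0,-1]$ on the maximal parabolic of $\GL_{2d}(F)$ has three composition factors, $\Delta_\rho[0,-1] \boxtimes \1$, $\rho \boxtimes \rho|\cdot|^{-1}$, and $\1 \boxtimes \Delta_\rho[0,-1]$, so the Jacquet module of the whole outer $\GL$-part is a controlled sum of tensor products built from $\rho^a$, $(\rho|\cdot|^{-1})^b$, and $\Delta_\rho[0,-1]^c$. Combining this with the geometric-lemma decomposition applied to $\pi(\phi,\ep)$, one can express the contributions to $Z_\rho[0,1]^k$ in terms of $D^{\max}_{\rho}$ and $D^{\max}_{\rho|\cdot|^1}$ of the tempered piece, for which \cite[Theorem 7.1]{AM} provides explicit formulas. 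Matching the resulting Langlands data to the correct candidate in each of the four cases of the proposition then yields the claimed formulas; the irreducibility of $D^{\max}_{Z_\rho[0,1]}(\pi)$ ensures that at most one candidate survives, and exhibiting an actual embedding confirms which one.

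The main obstacle will be the case analysis governed by the parity of $t$ and the value of $\delta$. When $\delta = 1$, the factor $\rho \boxtimes S_3$ in $\phi$ interacts with the $\rho$'s produced by splitting $\Delta_\rho[0,-1]^t$, alternating the output in $t$ and in particular toggling the tempered parameter to $\phi + \rho - \rho \boxtimes S_3$ in the odd-$t$ sub-case; when $\delta = 0$, no such interaction is available, so the recursion modifies the Langlands data on the $\GL$-side instead, which explains the $(\rho|\cdot|^{-1})^{s+1}$ or $\Delta_\rho[0,-1]$ appearing in cases (3) and (4). The most delicate sub-cases are $s = 0$ in (2) and (4), where one cannot decrement $s$ and must instead flip the sign character to $\ep'$; verifying this step requires tracking Arthur's character formula for $\pi(\phi,\ep)$ through the derivatives involved, and checking compatibility with the signs predicted by \cite[Theorem 7.1]{AM} applied to the relevant boundary configurations.
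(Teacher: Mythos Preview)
Your overall strategy---reduce to the $t=0$ case of \cite[Proposition 8.4]{AM} and the tempered case of \cite[Theorem 8.1]{AM}, then use irreducibility of $D^{\max}_{Z_\rho[0,1]}(\pi)$ to pin down the answer---matches the paper's in spirit, but the mechanism you propose differs from what the paper actually does. You plan to compute Jacquet modules of the standard module via the geometric lemma and then match Langlands data. The paper instead exploits that $\pi$ is of Arthur type and produces the required embeddings directly from the $A$-packet structure. For instance, in case~(2) with $m=s+2u$, the paper uses Theorem~\ref{s=0} (applied in the degenerate form $u_\rho(1,1)=\rho$) to obtain
\[
\pi \hookrightarrow \rho^{u} \rtimes L\bigl((\rho|\cdot|^{-1})^s,\Delta_\rho[0,-1]^t;\pi(\phi-\rho^{2u},\ep)\bigr)
\hookrightarrow \rho^{u+t} \rtimes L\bigl((\rho|\cdot|^{-1})^{s+t};\pi(\phi-\rho^{2u},\ep)\bigr),
\]
then invokes \cite[Proposition 8.3 (ii)]{X2} to identify $D^{\max}_\rho(\pi)$, and finally (splitting on the parity of $t$) rearranges to reach an embedding $\pi \hookrightarrow Z_\rho[0,1]^{t+1}\rtimes(\text{claimed answer})$. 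This simultaneously gives the order of the highest derivative and its value.

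Your route is plausible but leaves the hardest step unaddressed. The geometric lemma applied to the standard module only gives an \emph{upper} bound on the order of $D^{\max}_{Z_\rho[0,1]}(\pi)$, since $\pi$ is merely a subrepresentation; to show the derivative is nonzero at the claimed order you still need an explicit embedding $\pi\hookrightarrow Z_\rho[0,1]^k\rtimes\pi'$, and you have not said how you would manufacture one without the $A$-packet input (Theorem~\ref{s=0} and \cite{X2}) that the paper uses. The paper's approach buys those embeddings essentially for free from the structure theory, which is why it can dispense with any direct Jacquet-module bookkeeping.
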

\begin{proof}
The proof is essentially the same as \cite[Proposition 8.4]{AM}.
We only give a detail for the proof of (2). 
\par

Assume that $\delta = 1$ and $m \equiv s \bmod 2$.
Write $m = s+ 2u$ so that $u > 0$.
Note that $\pi \in \Pi_\psi$ with
\[
\psi = \phi - \rho^s + (\rho \boxtimes S_1 \boxtimes S_3)^s + (\rho \boxtimes S_2 \boxtimes S_2)^t.
\]
Since $\psi$ contains $\rho$ with multiplicity $2u$, 
by Theorem \ref{s=0}, we see that 
\begin{align*}
\pi 
&\hookrightarrow 
\rho^u \rtimes L((\rho|\cdot|^{-1})^s, \Delta_\rho[0,-1]^t; \pi(\phi - \rho^{2u}, \ep))
\\
&\hookrightarrow 
\rho^{u+t} \rtimes L((\rho|\cdot|^{-1})^{s+t}; \pi(\phi - \rho^{2u}, \ep)).
\end{align*}
Since $L((\rho|\cdot|^{-1})^{s+t}; \pi(\phi - \rho^{2u}, \ep)) 
= (\rho|\cdot|^{-1})^t \rtimes L((\rho|\cdot|^{-1})^{s}; \pi(\phi - \rho^{2u}, \ep))$
is irreducible, 
and since $L((\rho|\cdot|^{-1})^{s}; \pi(\phi - \rho^{2u}, \ep))$
belongs to $\Pi_{\psi_0}$ with $\psi_0 = \phi - \rho^{m} + (\rho \boxtimes S_1 \boxtimes S_3)^s$, 
by \cite[Proposition 8.3 (ii)]{X2}, we see that 
$D^{\max}_\rho(\pi) = L((\rho|\cdot|^{-1})^{s+t}; \pi(\phi - \rho^{2u}, \ep))$
up to a multiplicity.
\par

When $t$ is odd, since $\ep(\rho \boxtimes S_3) = \ep(\rho)$, 
we have 
\[
\pi \hookrightarrow 
\rho^{u+t} \times (\rho|\cdot|^{1})^{t+1} \rtimes 
L((\rho|\cdot|^{-1})^{s}; \pi(\phi - \rho^{2u-1} - \rho \boxtimes S_3, \ep)).
\]
Hence
\begin{align*}
\pi &\hookrightarrow Z_\rho[0,1]^{t+1} \times \rho^{u-1} \rtimes 
L((\rho|\cdot|^{-1})^{s}; \pi(\phi - \rho^{2u-1} - \rho \boxtimes S_3, \ep))
\\& \cong 
Z_\rho[0,1]^{t+1} \rtimes 
L((\rho|\cdot|^{-1})^{s}; \pi(\phi - \rho - \rho \boxtimes S_3, \ep)).
\end{align*}
On the other hand, when $t$ is even and $s > 0$, since $\ep(\rho \boxtimes S_3) \not= \ep(\rho)$,
we have 
\[
\pi \hookrightarrow 
\rho^{u+t} \times (\rho|\cdot|^{1})^{t+1} \rtimes 
L((\rho|\cdot|^{-1})^{s-1}; \pi(\phi - \rho^{2u}, \ep)).
\]
Hence
\begin{align*}
\pi &\hookrightarrow Z_\rho[0,1]^{t+1} \times \rho^{u-1} \rtimes 
L((\rho|\cdot|^{-1})^{s-1}; \pi(\phi - \rho^{2u}, \ep))
\\& \cong 
Z_\rho[0,1]^{t+1} \rtimes 
L((\rho|\cdot|^{-1})^{s-1}; \pi(\phi - \rho^2, \ep)).
\end{align*}
The last isomorphism follows from Theorem \ref{s=0}. 
The case where $s = 0$ was proven in \cite[Proposition 8.4]{AM}. 
Therefore, we obtain (2).
\end{proof}

The converse of this proposition is given as follows. 
\begin{cor}
Let $\pi = L((\rho|\cdot|^{-1})^s, \Delta_\rho[0,-1]^t; \pi(\phi,\ep))$ be as above. 
Suppose that $\pi$ is $\rho|\cdot|^1$-reduced.  
Write 
$D^{\max}_{Z_\rho[0,1]}(\pi) = D^{(k)}_{Z_\rho[0,1]}(\pi) 
= L((\rho|\cdot|^{-1})^{s'}, \Delta_\rho[0,-1]^{t'}; \pi(\phi',\ep'))$. 
Assume that $k > 0$. 
Set $m' = m_{\phi'}(\rho)$. 
\begin{enumerate}
\item
If $k$ is even and $t' = 1$, then 
\[
(s,t,\phi,\ep) = (s', k+1, \phi',\ep').
\]

\item
If $k$ is even, $t' = 0$ and $m' \equiv s' \bmod 2$, 
then 
\[
(s,t,\phi,\ep) = (s', k, \phi',\ep').
\]

\item
If $k$ is even, $t' = 0$, $m' \equiv s'+1 \bmod 2$ and $\phi' \supset \rho \boxtimes S_3$, 
then 
\[
(s,t,\phi,\ep) = (s',k,\phi',\ep'). 
\]

\item
If $k$ is even, $t' = 0$, $m' \equiv s'+1 \bmod 2$ and $\phi' \not\supset \rho \boxtimes S_3$, 
then $m' > 0$ and 
\[
(s,t,\phi,\ep) = (s',k-1,\phi'+\rho+\rho \boxtimes S_3,\ep)
\]
with $\ep(\rho) = \ep'(\rho)$ and $\ep(\rho \boxtimes S_3) = (-1)^k \ep(\rho)$. 

\item
If $k$ is odd and $t' = 1$,  
then $m' > 0$ and 
\[
(s,t,\phi,\ep) = (s'+1, k, \phi'+\rho^2,\ep)
\]
with $\ep(\rho) = \ep'(\rho)$.

\item
If $k$ is odd, $t' = 0$ and $m' = s'$, 
then 
\[
(s,t,\phi,\ep) = (s', k, \phi',\ep').
\]

\item
If $k$ is odd, $t' = 0$, $s' = 0 < m'$ and $m' \equiv 0 \bmod 2$, 
then
\[
(s,t,\phi,\ep) = (0, k, \phi',\ep)
\]
with $\ep(\rho) \not= \ep'(\rho)$.

\item
If $k$ is odd, $t' = 0$ $s' = 0 < m'$, $m' \equiv 1 \bmod 2$ and $\phi' \supset \rho \boxtimes S_3$, 
then $m' > 0$ and 
\[
(s,t,\phi,\ep) = (1, k-1 , \phi' + \rho^2,\ep').
\]

\item
If $k$ is odd, $t' = 0$ $s' = 0 < m'$, $m' \equiv 1 \bmod 2$ 
and $\phi' \not\supset \rho \boxtimes S_3$, 
then 
\[
(s,t,\phi,\ep) = (0, k-1, \phi' + \rho + \rho \boxtimes S_3,\ep)
\]
with $\ep(\rho) \not= \ep'(\rho)$ and $\ep(\rho \boxtimes S_3) = (-1)^k \ep(\rho)$. 

\item
If $k$ is odd, $t' = 0$, $0 < s' < m'$ and $m' \equiv s' \bmod 2$, 
then 
\[
(s,t,\phi,\ep) = (s'-1, k+1, \phi' - \rho^2,\ep'). 
\]

\item
If $k$ is odd, $t' = 0$, $0 < s' < m'$, 
$m' \equiv s'+1 \bmod 2$ and $\phi' \supset \rho \boxtimes S_3$, 
then 
\[
(s,t,\phi,\ep) = (s'+1, k-1, \phi' + \rho^2,\ep)
\]
with $\ep(\rho) = \ep'(\rho)$. 

\item
If $k$ is odd, $t' = 0$, $0 < s' < m'$, $m' \equiv s'+1 \bmod 2$ 
and $\phi' \not\supset \rho \boxtimes S_3$, 
then 
\[
(s,t,\phi,\ep) = (s'-1, k, \phi' - \rho + \rho \boxtimes S_3,\ep)
\]
with $\ep(\rho) = \ep'(\rho)$ and $\ep(\rho \boxtimes S_3) = (-1)^{k-1} \ep(\rho)$.  
\end{enumerate}
\end{cor}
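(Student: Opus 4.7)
The plan is to invert each subcase of the preceding proposition. Given the derivative $D^{\max}_{Z_\rho[0,1]}(\pi) = L((\rho|\cdot|^{-1})^{s'}, \Delta_\rho[0,-1]^{t'}; \pi(\phi',\ep'))$ and the exponent $k > 0$, the goal is to show that there is a unique $(s,t,\phi,\ep)$ whose $Z_\rho[0,1]$-derivative yields this data, and that it is given by the formulas listed.

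First, I would organize the twelve cases of the corollary by two coarse invariants: the parity of $k$ and the value of $t'$. Cases (1)--(4) correspond to $k$ even and cases (5)--(12) to $k$ odd. Within each group, $t' = 1$ occurs only when a $\Delta_\rho[0,-1]$ factor survives the derivative, which happens exactly in the proposition's case (3) with $t$ odd (giving corollary case (1)) and in case (4) with $t \equiv 1 \bmod 2$ and $m > s > 0$ (giving corollary case (5)). All remaining cases have $t' = 0$, and I would then use the parity of $m' - s'$ together with the membership of $\rho \boxtimes S_3$ in $\phi'$ to further classify them. The forward formulas shift $m_\phi(\rho)$ by $0$, $\pm 1$, or $\pm 2$ and likewise modify $m_\phi(\rho \boxtimes S_3)$, and these shifts are reversible once we know which subcase of the proposition we are inverting. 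In each case the recovered tuple $(s,t,\phi,\ep)$ is read directly from the forward formula; for the subcases where $\ep'$ differs from $\ep$ on $\rho$ or $\rho \boxtimes S_3$, the inversion amounts to recording this sign flip, which produces the auxiliary sign conditions appearing in cases (4), (5), (7), (9), (11), and (12).

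The main obstacle, and where care is required, is the verification step: one must check that each proposed $(s,t,\phi,\ep)$ actually satisfies the defining hypotheses of the matched subcase of the proposition, namely that $\pi = L((\rho|\cdot|^{-1})^s, \Delta_\rho[0,-1]^t; \pi(\phi,\ep))$ is $\rho|\cdot|^1$-reduced (equivalently $m_\phi(\rho \boxtimes S_3) = \delta$ and $s \leq m_\phi(\rho) - \delta$), and that the parity relation between $m_\phi(\rho)$ and $s$, together with the value of $\delta$ determined by $\ep(\rho)\ep(\rho \boxtimes S_3)$ and the parity of $t$, agrees with the subcase. The delicate point is that the value of $\delta$ associated to the recovered $(\phi,\ep,t)$ is not directly visible from $(\phi',\ep')$, so one must argue that the combinatorial classification above forces exactly one value of $\delta$ in each case; this relies on the sign condition $\prod (-1)^{[b_i/2]+l_i}\eta_i^{b_i} = 1$ from Definition \ref{segments} and the character formula of \cite[Theorem 3.5]{At} applied to the extended multi-segment describing $\pi$. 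Once this verification is complete for each of the twelve cases, the equality of the forward derivative with $L((\rho|\cdot|^{-1})^{s'}, \Delta_\rho[0,-1]^{t'}; \pi(\phi',\ep'))$ and the uniqueness assertion follow from the proposition applied in the forward direction.
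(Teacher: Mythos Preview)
Your approach is correct and matches what the paper intends: the paper gives no proof at all, simply stating the corollary as ``the converse of this proposition,'' so the implicit argument is exactly the case-by-case inversion you describe.

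One remark: your invocation of the sign condition from Definition~\ref{segments} and of \cite[Theorem~3.5]{At} to pin down $\delta$ is unnecessary. The quantity $\delta$ is defined purely in terms of $(\phi,\ep,t)$ by the condition $\rho,\rho\boxtimes S_3\subset\phi$ and $\ep(\rho)\ep(\rho\boxtimes S_3)\neq(-1)^t$; it has nothing to do with the extended multi-segment sign constraint. Once you have your candidate $(s,t,\phi,\ep)$ in each of the twelve cases, checking that $\delta$ takes the required value and that $m_\phi(\rho\boxtimes S_3)=\delta$, $s\le m_\phi(\rho)-\delta$ is a direct arithmetic verification from the formulas themselves (for instance, in case~(12) you recover $m_\phi(\rho)=m'-1\ge1$, $m_\phi(\rho\boxtimes S_3)=1$, and $\ep(\rho)\ep(\rho\boxtimes S_3)=(-1)^{k-1}\neq(-1)^k=(-1)^t$, so $\delta=1$ as required). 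The case distinctions in the corollary --- parity of $k$, value of $t'$, parity of $m'-s'$, whether $s'=0$ or $0<s'<m'$, and membership of $\rho\boxtimes S_3$ in $\phi'$ --- already single out a unique subcase of the proposition, so no external input is needed.
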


%\subsection{$Z_\rho[0,1]$-derivatives: The general case}
\subsection{$Z_\rho[0,1]$-derivatives: The general case}
We continue to study $Z_\rho[0,1]$-derivatives.
Here, we consider the general case. 
The following is an algorithm to compute $D^{\max}_{Z_\rho[0,1]}(\pi)$, 
which is analogue to Jantzen's one \cite[Section 3.3]{J-dual}.
The proof is also similar and we omit it.

\begin{alg}\label{algD}
Let $\pi \in \Irr(G_n)$ be of good parity.
Assume that $\pi$ is $\rho|\cdot|^1$-reduced.
 
\begin{enumerate}
\item
We write
$\pi = L(\Delta_{\rho_1}[x_1,y_1], \dots, \Delta_{\rho_r}[x_r,y_r], 
(\rho|\cdot|^{-1})^s, \Delta_\rho[0,-1]^{t}; \pi(\phi,\ep))$ as in the Langlands classification, 
where
\begin{itemize}
\item
$\phi \in \Phi_\gp(G_{n_0})$; 
\item
$s,t \geq 0$; 
\item
$x_1+y_1 \leq \dots \leq x_r+y_r < 0$; 
\item
$\Delta_{\rho_i}[x_i,y_i] \not\cong \rho|\cdot|^{-1}, \Delta_\rho[0,-1]$ for $i = 1, \dots, r$.
\end{itemize}
Remark that if $\rho_i \cong \rho$ and $x_i+y_i = -1/2$, then $\rho|\cdot|^{-1} \in [x_i,y_i]_\rho$
so that 
$\Delta_{\rho_i}[x_i,y_i] \times \rho|\cdot|^{-1} \cong \rho|\cdot|^{-1} \times \Delta_{\rho_i}[x_i,y_i]$.
Note that $y_i \not= -1$ if $\rho_i \cong \rho$.

\item
Set 
\begin{align*}
\pi_A &= L((\rho|\cdot|^{-1})^s, \Delta_\rho[0,-1]^{t}; \pi(\phi,\ep)), \\
\pi_A' &= D^{\max}_{\rho|\cdot|^1}(\pi_A) = D^{(l_1)}_{\rho|\cdot|^1}(\pi_A), \\
\pi_A'' &= D^{\max}_{Z_\rho[0,1]}(\pi_A') = D^{(k_1)}_{Z_\rho[0,1]}(\pi_A').
\end{align*}
Note that $\pi_A'$ and $\pi_A''$ are of the same form as $\pi_A$.

\item
We have $\pi \hookrightarrow \tau \rtimes \pi_A''$, 
where 
\begin{align*}
\tau &= 
\soc(L(\Delta_{\rho_1}[x_1,y_1], \dots, \Delta_{\rho_r}[x_r,y_r], 
(\rho|\cdot|^{1})^{l_1}) \times Z_\rho[0,1]^{k_1})
\\&\cong 
L(\Delta_{\rho_1}[x_1,y_1], \dots, \Delta_{\rho_r}[x_r,y_r], \rho^{k_1}, (\rho|\cdot|^{1})^{k_1+l_1}).
\end{align*}

\item
Remark that $\tau$ is left $\rho|\cdot|^1$-reduced 
since $\pi$ is $\rho|\cdot|^1$-reduced.
Compute $\tau' = L_{Z_\rho[0,1]}^{\max}(\tau) = L_{Z_\rho[0,1]}^{(k)}(\tau)$.
It is of the form
\[
\tau' = 
L(\Delta_{\rho_1}[x'_1,y_1], \dots, \Delta_{\rho_r}[x'_r,y_r], \rho^{k_2}, (\rho|\cdot|^{1})^{k_2+l_2})
\]
with $l_2 \leq l_1$, $k_2 \leq k_1$ and $x'_1 + y_1 \leq \dots \leq x'_r + y_r < 0$.
Then 
\[
D^{\max}_{Z_\rho[0,1]}(\pi) \hookrightarrow \tau' \rtimes \pi_A''.
\]

\item
Compute 
\begin{align*}
\pi_B' &= \soc\left( Z_\rho[0,1]^{k_2} \rtimes \pi_A'' \right), \\
\pi_B &= \soc\left( (\rho|\cdot|^1)^{l_2} \rtimes \pi_B' \right).
\end{align*}
Then 
\[
D^{\max}_{Z_\rho[0,1]}(\pi) \hookrightarrow 
L(\Delta_{\rho_1}[x'_1,y_1], \dots, \Delta_{\rho_r}[x'_r,y_r]) \rtimes \pi_B. 
\]

\item
Note that $\pi_B$ is of the form 
$\pi_B = L((\rho|\cdot|^{-1})^{s'}, \Delta_\rho[0,-1]^{t'}; \pi(\phi',\ep'))$. 
We conclude that 
\[
D^{\max}_{Z_\rho[0,1]}(\pi) = 
L(\Delta_{\rho_1}[x'_1,y_1], \dots, \Delta_{\rho_r}[x'_r,y_r], 
(\rho|\cdot|^{-1})^{s'}, \Delta_\rho[0,-1]^{t'}; \pi(\phi',\ep')).
\]
\end{enumerate}
\end{alg}

Finally, we state an algorithm to compute $\soc(Z_\rho[0,1]^k \rtimes \pi)$.
\begin{alg}
Let $\pi \in \Irr(G_n)$ be of good parity.
Assume that $\pi$ is $\rho|\cdot|^1$-reduced.

\begin{enumerate}
\item
Write 
$\pi = L(\Delta_{\rho_1}[x_1,y_1], \dots, \Delta_{\rho_r}[x_r,y_r], 
(\rho|\cdot|^{-1})^s, \Delta_\rho[0,-1]^{t}; \pi(\phi,\ep))$ as in Algorithm \ref{algD} (1). 

\item
Let $\pi_A$, $\pi_A' = D^{(l_1)}_{\rho|\cdot|^1}(\pi_A)$, $\pi_A'' = D^{(k_1)}_{Z_\rho[0,1]}(\pi_A')$ 
be as in Algorithm \ref{algD} (2), and 
$\tau = L(\Delta_{\rho_1}[x_1,y_1], \dots, \Delta_{\rho_r}[x_r,y_r], 
\rho^{k_1}, (\rho|\cdot|^{1})^{k_1+l_1})$
be as in Algorithm \ref{algD} (3).

\item
Compute $\tau' = \soc(Z_\rho[0,1]^k \rtimes \tau)$. 
It is of the form
\[
\tau' = 
L(\Delta_{\rho_1}[x'_1,y_1], \dots, \Delta_{\rho_r}[x'_r,y_r], \rho^{k_2}, (\rho|\cdot|^{1})^{k_2+l_2})
\]
with $x'_1 + y_1 \leq \dots \leq x'_r + y_r < 0$.

\item
Compute 
\begin{align*}
\pi_B' &= \soc\left( Z_\rho[0,1]^{k_2} \rtimes \pi_A'' \right), \\
\pi_B &= \soc\left( (\rho|\cdot|^1)^{l_2} \rtimes \pi_B' \right).
\end{align*}
Then $\pi_B$ is of the form $\pi_B = L((\rho|\cdot|^{-1})^{s'}, \Delta_\rho[0,-1]^{t'}; \pi(\phi',\ep'))$. 
We conclude that 
\[
\soc(Z_\rho[0,1] ^k \rtimes \pi) = 
L(\Delta_{\rho_1}[x'_1,y_1], \dots, \Delta_{\rho_r}[x'_r,y_r], 
(\rho|\cdot|^{-1})^{s'}, \Delta_\rho[0,-1]^{t'}; \pi(\phi',\ep')).
\]
\end{enumerate}
\end{alg}

%References

\end{document}